\newcommand{\CC}{{\mathbb{C}}}
\newcommand{\FF}{{\mathbb{F}}}
\newcommand{\cE}{{\mathcal{E}}}
\newcommand{\cH}{{\mathcal{H}}}
\newcommand{\Irr}{{\operatorname{Irr}}}
\newcommand{\Hom}{{\operatorname{Hom}}}
\newcommand{\Soc}{{\operatorname{Soc}}}
\newcommand{\GL}{{\operatorname{GL}}}
\newcommand{\SL}{{\operatorname{SL}}}
\newcommand{\GU}{{\operatorname{GU}}}
\newcommand{\PCSp}{{\operatorname{PCSp}}}
\newcommand{\Sp}{{\operatorname{Sp}}}
\newcommand{\Spin}{{\operatorname{Spin}}}
\newcommand{\CO}{{\operatorname{CO}}}
\newcommand{\CSO}{{\operatorname{CSO}}}
\newcommand{\SO}{{\operatorname{SO}}}
\newcommand{\PCO}{{\operatorname{PCO}}}
\newcommand{\PCSO}{{\operatorname{PCSO}}}
\newcommand{\tr}{{\operatorname{tr}}}
\newcommand{\St}{{\operatorname{St}}}
\newcommand{\tw}[1]{{}^#1\!}
\newcommand{\hlf}{\frac{1}{2}}
\newcommand{\tG}{{\tilde G}}
\let\vhi=\varphi
\let\eps=\epsilon
\let\sqt=\boxtimes
\newtheorem{thm}{Theorem}[section]
\newtheorem{lem}[thm]{Lemma}
\newtheorem{cor}[thm]{Corollary}
\newtheorem{prop}[thm]{Proposition}
\newtheorem{thmA}{Theorem}
\theoremstyle{definition}
\theoremstyle{remark}
\newtheorem{rem}[thm]{Remark}
\begin{document}

\title[Low-dimensional representations of finite orthogonal
  groups]{Low-dimensional representations\\ of finite orthogonal groups}

\date{\today}

\author{Kay Magaard}
\address{}
\author{Gunter Malle}
\address{FB Mathematik, TU Kaiserslautern, Postfach 3049,
  67653 Kaisers\-lautern, Germany}
  \makeatletter\email{malle@mathematik.uni-kl.de}\makeatother

\thanks{The second author gratefully acknowledges financial support by SFB TRR
  195.}

\keywords{low dimensional representations, orthogonal groups, decomposition matrices}

\subjclass[1991]{Primary 20C33; Secondary 20D06, 20G40}

\begin{abstract}
We determine the smallest irreducible Brauer characters for finite quasi-simple
orthogonal type groups in non-defining characteristic. Under some restrictions
on the characteristic we also prove a gap result showing that the next larger
irreducible Brauer characters have a degree roughly the square of those of the
smallest non-trivial characters.
\end{abstract}

\maketitle


\section{Introduction}  \label{sec:intro}

This paper is devoted to studying low-dimensional irreducible representations
of finite orthogonal groups in non-defining characteristic. Our aim
is a gap result showing that there are a few well-understood representations
of very small degree, and all other irreducible representations have degree
which is roughly the square of the smallest ones. Knowing the low-dimensional
irreducible representations of quasi-simple groups has turned out to be of
considerable importance in many applications, most notably in the
determination of maximal subgroups of almost simple groups. More specifically
we prove:

\begin{thmA}   \label{thm:main even}
 Let $G=\Spin_{2n}^\eps(q)$ with $\eps\in\{\pm\}$, $q$ odd and $n\ge6$.
 Assume that $\ell\ge5$ is a prime not dividing $q(q+1)$. Let $\vhi$ be an
 $\ell$-modular irreducible Brauer character of $G$ of degree less than
 $q^{4n-10}-q^{n+4}$. Then $\vhi(1)$ is one of
 $$\begin{aligned}
   1,\quad &q\frac{(q^n-\eps1)(q^{n-2}+\eps1)}{q^2-1}-\kappa_1,\ 
   &q^2\frac{(q^{2n-2}-1)}{q^2-1}-\kappa_2,\\
   &\hlf\frac{(q^n-\eps1)(q^{n-1}\pm\eps1)}{q\mp 1},\ 
   &\frac{(q^n-\eps1)(q^{n-1}\pm\eps1)}{q\mp1},
 \end{aligned}$$
 where $\kappa_1,\kappa_2\in\{0,1\}$.
\end{thmA}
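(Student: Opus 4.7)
The overall strategy is the standard one for gap results in non-defining characteristic: every irreducible $\ell$-modular Brauer character $\vhi$ of $G$ is a composition factor of the $\ell$-reduction $\hat\chi$ of some ordinary $\chi\in\Irr(G)$, and then $\vhi(1)\le\chi(1)$. I would therefore first enumerate all $\chi\in\Irr(G)$ of degree below a suitable threshold (around $q^{4n-10}$); then for each such $\chi$ identify the composition factors of $\hat\chi$; and finally rule out new small-degree Brauer constituents arising from ordinary characters above the threshold.

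For the first step I would invoke the known classification of small-degree ordinary characters of spin groups, drawn from Lusztig's parametrization together with L\"ubeck's computations and the results of Tiep--Zalesski. For $G=\Spin_{2n}^\eps(q)$ with $n\ge 6$, the ordinary characters of degree less than $q^{4n-10}$ should be precisely the trivial character, the unipotent character of degree $q(q^n-\eps 1)(q^{n-2}+\eps 1)/(q^2-1)$ attached to the appropriate two-row symbol, the near-permutation unipotent character of degree $q^2(q^{2n-2}-1)/(q^2-1)$ coming from the action on singular lines, and the spin-type characters of degree $(q^n-\eps 1)(q^{n-1}\pm\eps 1)/(q\mp 1)$ together with their two half-spin siblings.

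For the second step, the assumption $\ell\ge 5$ and $\ell\nmid q(q+1)$ forces $\ell$ to be either linear (dividing $q-1$) or of order at least $3$ modulo $q$. This controls the Harish-Chandra ramification and makes the decomposition matrix almost unitriangular on the smallest unipotent characters. Concretely, the two small unipotent characters should give rise to Brauer characters obtained by subtracting the trivial character at most once, producing the adjustments $\kappa_1,\kappa_2\in\{0,1\}$ of the statement. The spin-type characters have degrees coprime to $\ell$ in essentially all cases considered, so their reductions should remain irreducible, and the half-spin characters (when present) follow by a Clifford-theoretic argument from the full spin character.

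The main technical obstacle will be the third step: ruling out the existence of any $\chi\in\Irr(G)$ with $\chi(1)\ge q^{4n-10}$ whose reduction contains a Brauer character $\vhi$ of degree less than $q^{4n-10}-q^{n+4}$ outside the above list. For such $\chi$, the remaining constituents in $\hat\chi-a\vhi$ must contribute total degree at least $q^{n+4}$, and one needs lower bounds on the degrees of Brauer characters appearing as constituents of such larger ordinary characters. I would combine a careful degree count with Harish-Chandra restriction to a Levi subgroup of type $\Spin_{2(n-1)}^\eps(q)$ times a torus and an induction on $n$ (the hypothesis $n\ge 6$ leaves ample room to induct on orthogonal groups as small as $\Spin_{10}^\eps(q)$). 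Making this uniform across all non-unipotent Lusztig series, where the parametrization by semisimple classes in the dual group must be used together with Lusztig induction from centralizers, is the most delicate part of the argument.
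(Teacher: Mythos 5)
Your high-level outline (classify small ordinary characters, then control their modular constituents) matches the paper's overall architecture, but it is missing the key technical device that actually makes the argument go through, namely the analysis of $Q$-weight spaces from \cite{MRT}. The paper does \emph{not} enumerate all $\chi\in\Irr(G)$ with $\chi(1)<q^{4n-10}$ and then separately rule out small Brauer constituents coming from larger $\chi$. Instead, it works directly with a small Brauer character $\vhi$ affording a $kG$-module $W$: by \cite[Prop.~5.3]{MRT}, if $\dim W$ is below a certain bound then $W$ is \emph{$Q$-linear small} with respect to the unipotent radical $Q$ of an end-node parabolic $P=QL$. Section~3 of the paper (Lemmas~\ref{lem:Wchi D}, \ref{lem:WchiL D} and Propositions~\ref{prop:QfixD}, \ref{prop:DnSmall}) then computes traces of long root elements on the $Q$-isotypic summands, concludes $C_W(Q)\ne\{0\}$, and deduces that $W$ must occur in the Harish-Chandra induction of one of a short list of small characters of $L$. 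This bypasses entirely the ``third step'' in your plan — there is never a moment where one needs a lower bound on the degrees of Brauer constituents of \emph{arbitrary} large ordinary characters.

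That third step is precisely where your proposal has a genuine gap. You suggest combining ``a careful degree count with Harish-Chandra restriction\ldots and an induction on $n$,'' but this does not by itself produce the needed lower bound: a large ordinary character can in principle have a small Brauer constituent, and ruling this out requires some structural input. In the paper that input is the $Q$-linear small/large dichotomy plus the explicit character-value computations in Section~3. Without a mechanism of this kind your plan does not close. Two smaller but real issues: (i) the assertion that ``the spin-type characters have degrees coprime to $\ell$\ldots so their reductions should remain irreducible'' is not a valid inference — coprimality of the degree to $\ell$ does not give irreducibility modulo $\ell$; the paper instead uses that these are semisimple characters in Lusztig series $\cE(G,t)$ and invokes \cite[Prop.~1]{HM}, which applies whenever the $\ell'$-part of $t$ has the same centraliser as $t$. (ii) The precise conclusion (the corrections $\kappa_1,\kappa_2$ and the exact list) requires approximate unitriangularity of decomposition matrices via \cite{DM19} and \cite{BDR17}, together with the linear-prime result of Gruber--Hiss \cite{GH97}; this is exactly why the hypotheses on $\ell$ appear, and your write-up does not engage with that machinery beyond the word ``almost unitriangular.''
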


\begin{thmA}   \label{thm:main odd}
 Let $G=\Spin_{2n+1}(q)$ with $q$ odd and $n\ge5$. Assume that $\ell\ge5$ is
 a prime such that the order of $q$ modulo~$\ell$ is either odd, or bigger than
 $n/2$. Let $\vhi$ be an $\ell$-modular irreducible Brauer character of $G$
 of degree less than $(q^{4n-8}-q^{2n})/2$. Then $\vhi(1)$ is one of
 $$\begin{aligned}
  1,\quad & \frac{q^{2n}-1}{q^2-1},\quad \hlf q\frac{(q^n-1)(q^{n-1}-1)}{q+1},
    \quad \hlf q\frac{(q^n+1)(q^{n-1}+1)}{q+1},\\ 
   & \hlf q\frac{(q^n+1)(q^{n-1}-1)}{q-1}-\kappa_1,\quad
    \hlf q\frac{(q^n-1)(q^{n-1}+1)}{q-1}-\kappa_2,\quad 
   q\frac{q^{2n}-1}{q^2-1},\quad 
   \frac{q^{2n}-1}{q\pm1}
 \end{aligned}$$
 where $\kappa_1,\kappa_2\in\{0,1\}$.
\end{thmA}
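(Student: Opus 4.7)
The plan is to exploit the isomorphism $\Spin_{2n+1}(q)\cong\Sp_{2n}(q)$, valid for $q$ odd, and work inside $H:=\Sp_{2n}(q)$ with dual group $H^*=\SO_{2n+1}(q)$. By Lusztig's Jordan decomposition each ordinary irreducible character of $H$ has the form $\chi_{s,\mu}$ with $s$ a semisimple class in $H^*$ and $\mu$ a unipotent character of $C_{H^*}(s)$, of degree $|H^*:C_{H^*}(s)|_{p'}\cdot\mu(1)$. First I would enumerate the ordinary characters of degree below the bound $B:=(q^{4n-8}-q^{2n})/2$: such a bound forces $C_{H^*}(s)$ to contain a very large classical factor, and a case analysis on its isogeny type should yield exactly the characters appearing in the theorem, namely the trivial and Steinberg-type characters, the two Weil-like characters together with their irreducible constituents, and the two constituents of the permutation character on isotropic points.

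Next I would pass to modular reductions. The hypothesis that $\ell\ge 5$ and that the order $d$ of $q$ mod $\ell$ is odd or exceeds $n/2$ is tailored so that the theory of $d$-Harish-Chandra series together with the unipotent $\ell$-block theory of Brou\'e--Malle--Michel give tight control on decomposition numbers: the unipotent blocks entering the small-degree range have cyclic or otherwise small defect, and the Hecke algebras governing the remaining decomposition matrices have bounded rank. Under these conditions I would verify that the $\ell$-modular reductions of the ordinary characters listed above produce exactly the Brauer degrees claimed, with $\kappa_1,\kappa_2\in\{0,1\}$ recording whether the trivial character appears as a constituent of the reductions of the two characters of degree $\hlf q(q^n\mp 1)(q^{n-1}\pm 1)/(q-1)$.

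To conclude I would rule out any further small Brauer characters. Every irreducible Brauer character $\vhi$ is a constituent of the reduction of some $\chi_{s,\mu}$; if $\vhi(1)<B$ but $\vhi$ does not appear on the list, then necessarily $\chi_{s,\mu}(1)\ge B$ and a small Brauer constituent must arise in its reduction. For each semisimple class $s$ with $|H^*:C_{H^*}(s)|_{p'}$ below an intermediate threshold I would estimate the minimum degree of a non-trivial unipotent character of $C_{H^*}(s)$ via the standard dimension bounds for classical groups, and combine this with lower bounds on the index to force all remaining Brauer constituents to have degree at least $B$.

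The main obstacle will be the non-unipotent Lusztig series: centralizers of semisimple elements in $\SO_{2n+1}(q)$ exhibit a richer product structure than in type $A$, and small indices $|H^*:C_{H^*}(s)|_{p'}$ can occur for many distinct isogeny types of $C_{H^*}(s)$. Ruling out small Brauer constituents inside their Jordan blocks is the technical core, and will likely require combining dimension estimates for unipotent characters of classical groups of moderately smaller rank with a careful $d$-Harish-Chandra analysis for the intermediate-defect blocks not settled by the unipotent block theory alone.
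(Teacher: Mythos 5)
There is a genuine gap in your plan at the transition from ordinary degrees to Brauer degrees. Your third paragraph argues: if $\vhi(1)<B$ and $\vhi$ is not on the list, then $\vhi$ appears in the reduction of some $\chi_{s,\mu}$ with $\chi_{s,\mu}(1)\ge B$; you then propose to rule this out by lower-bounding $|H^*:C_{H^*}(s)|_{p'}\cdot\mu(1)$. But that quantity is precisely $\chi_{s,\mu}(1)$ again, so you are only re-deriving an \emph{ordinary} degree bound. It tells you nothing about the smallest constituent of $\chi_{s,\mu}^0$. A priori a character of degree $\sim q^{4n-8}$ might well have a Brauer constituent of degree $\sim q^{2n}$, and no estimate on unipotent degrees of $C_{H^*}(s)$ alone can exclude this. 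Some mechanism is needed to translate "ordinary degrees are large" into "Brauer degrees are large," and this is exactly what your plan does not supply. The vague invocation of $d$-Harish-Chandra theory and Brou\'e--Malle--Michel does not bridge this gap: those techniques organise the \emph{unipotent} blocks, but the heart of the problem is controlling decomposition numbers across all Lusztig series in a regime where ordinary degrees vastly exceed the target bound.

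The paper's solution to this is the parabolic-restriction machinery of Section~3, which you omit entirely. Following \cite{MRT}, one fixes an end-node maximal parabolic $P=QL$ and shows (Proposition~\ref{prop:QfixB}, using character value computations on long root elements in Lemmas~\ref{lem:WChi B} and~\ref{lem:WchiL B}) that any "$Q$-linear small" module has a nonzero $Q$-fixed subspace. Combined with \cite[Lemma~4.2]{GMST} and the classification of small complex characters of $L'\cong\Spin_{2n-1}(q)$, this forces $\vhi$ to be a constituent of Harish-Chandra induction from $L$ of one of the listed small characters (Proposition~\ref{prop:BnSmall}). The bound $q^{4n-8}-q^{2n}$ then enters via \cite[Prop.~5.3]{MRT} to guarantee $Q$-linear smallness, and the hypothesis on $\ell$ ($d_\ell(q)$ odd or $>n/2$) enters to guarantee unitriangularity of the relevant decomposition submatrices (Gruber--Hiss for linear primes, cyclic defect otherwise), which together with the explicit approximations of Lemma~\ref{lem:rhosBn} and the Bonnaf\'e--Dat--Rouquier Morita equivalence for $\cE_\ell(G,t)$ closes the argument. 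Without a substitute for this $Q$-linear small/Harish-Chandra restriction step your outline cannot produce the gap theorem, even granting the enumeration of small ordinary characters and the $\ell$-hypothesis.

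One more small point: working in $\Sp_{2n}(q)$ rather than $\Spin_{2n+1}(q)$ costs you nothing since $q$ is odd, but the specific restriction subgroup the paper needs is the stabiliser of a singular $1$-space in the orthogonal model; you will want to keep track of that when translating. Also note that the paper takes care to pass through a regular embedding $G\hookrightarrow\tilde G$ in the proof of Theorem~\ref{thm:BnSmall} in order to invoke the Gruber--Hiss unitriangularity result, which requires connected centre; this is an extra step your outline does not anticipate.
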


Observe that the character degrees listed in the theorems are of the order of
magnitude about $q^{2n-2}$, $q^{2n-1}$ respectively, which is only slightly
larger than the square root of the given bound.

Gap results of the form described above have already been proved
for all other series of finite quasi-simple groups of Lie type. The situation
for orthogonal groups is considerably harder since the smallest dimensional
representations have comparatively much larger degree than for the other series.
For odd-dimensional orthogonal groups over fields of even characteristic
Guralnick--Tiep \cite{GT04} obtained gap results similar to ours without any
restriction on the non-defining characteristic $\ell$ for which the
representations are considered. Their approach crucially relies on the
exceptional isomorphism to symplectic groups.
\par
Our results do not cover all characteristics $\ell$ as our proofs rely on
unitriangularity of a suitable part of the $\ell$-modular decomposition matrix
of the groups considered which in turn is proved using properties of
generalised Gelfand--Graev characters. Since this has not been established in
full generality (although it is expected to hold), the present state of
knowledge makes it necessary to impose certain restrictions on the prime
numbers $\ell$ considered, as well as, more seriously, on the underlying
characteristic having to be odd.
\medskip

The paper is structured as follows. In Section~\ref{sec:complex} we determine
the small dimensional complex irreducible characters of spin groups using
Deligne--Lusztig theory. In Section~\ref{sec:parabolic} we investigate the
restriction of small dimensional Brauer characters to an end node parabolic
subgroup. Finally, with this information we determine the precise dimensions
of the smallest Brauer characters for all three series of spin groups in
Section~\ref{sec:main} and derive the gap results in Theorems~\ref{thm:main even} and~\ref{thm:main odd} including the precise values of the $\kappa_i$,
see Theorem~\ref{thm:BnSmall} and Corollary~\ref{cor:main Dn}.
\medskip

Kay and I started work on this paper around 2011. Sadly, he passed away very
unexpectedly shortly before the completion of the manuscript. I would like to
dedicate this paper to his memory.

\section{Small degree complex irreducible characters}   \label{sec:complex}

In this section we recall the classification of the smallest degrees of complex
irreducible characters of the finite spin groups $G$ by using Lusztig's
parametrisation in terms of Lusztig series $\cE(G,s)$ indexed by classes of
semisimple elements $s$ in the dual group $G^*$ ().

\subsection{The odd-dimensional spin groups $\Spin_{2n+1}(q)$}

Let $q$ be a power of a prime and $G=\Spin_{2n+1}(q)$ with $n\ge2$. Recall
that Lusztig's Jordan decomposition (see e.g. \cite[Thm.~2.6.22]{GM19}) gives a bijection
$$J_s:\cE(G,s)\longrightarrow \cE(C_{G^*}(s),1)$$
with unipotent characters of the centraliser $C_{G^*}(s)$, under which the
character degrees transform by the formula
$$\chi(1)=|G^*:C_{G^*}(s)|_{p'}\,J_s(\chi)(1).$$
\par
We start by enumerating unipotent characters of small degree. Here, we
allow for slightly larger degrees than in the general case, since this will
be needed later on and moreover we believe that this information may be of
independent interest.

Let us recall that a \emph{symbol} is a pair $S=(X,Y)$ of strictly increasing
sequences $X=(x_1<\ldots<x_r)$, $Y=(y_1<\ldots<y_s)$ of non-negative integers.
The \emph{rank} of $S$ is then defined to be
$$\sum_{i=1}^r x_i+\sum_{j=1}^s y_j
  -\left\lfloor\left(\frac{r+s-1}{2}\right)^2\right\rfloor.$$
The symbol $S'=(\{0\}\cup (X+1),\{0\}\cup (Y+1))$ is said to be
\emph{equivalent} to $S$, and so is the symbol $(Y,X)$. The rank is constant
on equivalence classes. The \emph{defect} of $S$ is
$d(S)=||X|-|Y||$, which clearly is also invariant under equivalence.
\par

The unipotent characters of the groups $\Spin_{2n+1}(q)$ are parametrised by
equivalence classes symbols of rank~$n$ and odd defect (see
e.g.~\cite[Thm.~4.5.1]{GM19}).
The following result is due to Nguyen \cite[Prop.~3.1]{Ng10} for $n\ge6$:

\begin{prop}   \label{prop:unipB}
 Let $G=\Spin_{2n+1}(q)$ or $\Sp_{2n}(q)$. Let $\chi$ be a unipotent character
 of $G$ of degree
 $$\chi(1)\le \begin{cases}
  q^{6n-16}-q^{4n-5}& \text{ when $n\ge6$},\\
  q^{15}-q^{12}& \text{ when $n=5$},\\
  q^{11}-q^9-q^8& \text{ when $n=4$},\\
  q^7-q^5& \text{ when $n=3$}.
 \end{cases}$$
 Then $\chi$ is as given in Table~\ref{tab:unipB} where we also record the
 degree of $\chi(1)$ as a polynomial in~$q$.
\end{prop}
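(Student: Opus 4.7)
The strategy is to invoke Lusztig's explicit degree formula, which expresses the degree of each unipotent character of $\Sp_{2n}(q)$ (and, via the same combinatorics, of $\Spin_{2n+1}(q)$) as a polynomial in $q$ determined by its parametrising symbol. The general structural fact is that the $q$-degree of $\chi_S(1)$ grows both with the defect $d(S)$ and with how ``spread'' the entries of $S$ are; consequently only a bounded family of symbols of rank $n$ can produce characters below the bound stated in the proposition.

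For $n\ge 6$ the classification under the bound $q^{6n-16}-q^{4n-5}$ is exactly the content of Nguyen's \cite[Prop.~3.1]{Ng10}, so in that range nothing beyond direct citation is required. For the remaining cases $n=3,4,5$ my plan is to enumerate explicitly. First I would rule out symbols of large defect: the $a$-function forces a symbol of defect $d$ with $r+s$ entries to contribute a monomial factor of $q$-degree roughly $\lfloor(r+s-1)^2/4\rfloor$, so for each of $n=3,4,5$ only defects $d=1$ and possibly $d=3$ survive the stated bound. Second, within each admissible defect I would bound the maximal entry in $X$ and $Y$, using the fact that shifting an entry upward by one scales $\chi_S(1)$ by a factor of order $q$. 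These two reductions leave only a short list of candidate symbols in each rank.

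For every surviving candidate I would substitute into Lusztig's formula and record $\chi_S(1)$ as an explicit polynomial in $q$, then compare with the bounds $q^7-q^5$, $q^{11}-q^9-q^8$, $q^{15}-q^{12}$ and match against Table~\ref{tab:unipB}. The main obstacle I anticipate is not conceptual but bookkeeping, in particular at $n=4$, where the bound $q^{11}-q^9-q^8$ is tight enough that several inequivalent defect-$1$ and defect-$3$ symbols yield $q$-polynomials with comparable leading terms, so the subleading coefficients have to be compared carefully to decide inclusion. A careful rank-by-rank tabulation handles this cleanly and finishes the proof.
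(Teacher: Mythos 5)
Your overall shape is right, but two points deserve flagging. First, you lean entirely on Nguyen's Prop.~3.1 for the whole range $n\ge 6$. The paper's own proof is more cautious: it handles $n\le 9$ by computing the degree polynomials with {\tt Chevie} and only invokes \cite{Ng10} for $n\ge10$. This is likely not an accident --- the text just before the proposition explicitly says ``we allow for slightly larger degrees than in the general case,'' and the threshold $q^{6n-16}-q^{4n-5}$ is noticeably larger than the bounds $q^{4n-10}$ attributed to Nguyen elsewhere in Section~\ref{sec:complex}. So before you cite \cite[Prop.~3.1]{Ng10} outright for $n=6,\dots,9$ you should check that Nguyen's bound really covers this larger range, or else reproduce the Chevie (or hand) verification there as well. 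Second, your remark that ``the subleading coefficients have to be compared carefully'' understates what has to happen for small $q$: a polynomial of higher $q$-degree can be \emph{smaller} than one of lower degree at small $q$ (the table's entries with conditions like $(n,q)=(3,2)$ are exactly this phenomenon), so the $q$-degree filter you set up in step two is only decisive for $q$ large enough. The paper handles this by splitting into $q<20$ (direct numerical evaluation of the polynomials) and $q>20$ (inequality estimates); your argument needs an analogous explicit split, not just a careful eye on subleading coefficients. Apart from these two issues, your symbol-by-defect enumeration for $n=3,4,5$ is a legitimate hand replacement for the Chevie computation --- the defect-$\le3$ restriction you anticipate is in fact automatic at these ranks since no rank-$n$ symbol of defect $5$ exists for $n\le 5$ --- and the structure of the argument matches the paper's.
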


\begin{table}[htbp]
 \caption{Small unipotent characters in types $B_n$ and $C_n$}
  \label{tab:unipB}
\[\begin{array}{|l|l|l|l|}
\hline
     S& \chi_S(1)& \deg_q(\chi_S(1))& \text{conditions}\cr
\hline
\binom{n}{-}& 1& 0& \cr
\hline
\binom{0,1,n}{-}& \hlf q\frac{(q^n-1)(q^{n-1}-1)}{q+1}& 2n-1& \cr
\binom{0,1}{n}& \hlf q\frac{(q^n+1)(q^{n-1}+1)}{q+1}& 2n-1& \cr
\binom{1,n}{0}& \hlf q\frac{(q^n+1)(q^{n-1}-1)}{q-1}& 2n-1& \cr
\binom{0,n}{1}& \hlf q\frac{(q^n-1)(q^{n-1}+1)}{q-1}& 2n-1& \cr
\hline
\binom{0,2,n-1}{-}& \hlf q^2\frac{(q^{2n}-1)(q^{n-1}-1)(q^{n-3}-1)}{q^4-1}& 4n-6& n>3\cr
\binom{0,2}{n-1}& \hlf q^2\frac{(q^{2n}-1)(q^{n-1}+1)(q^{n-3}+1)}{q^4-1}& 4n-6& n>3\cr
\binom{2,n-1}{0}& \hlf q^2\frac{(q^{2n}-1)(q^{n-1}+1)(q^{n-3}-1)}{(q^2-1)^2}& 4n-6& n>3\cr
\binom{0,n-1}{2}& \hlf q^2\frac{(q^{2n}-1)(q^{n-1}-1)(q^{n-3}+1)}{(q^2-1)^2}& 4n-6& n>3\cr
\binom{1,n-1}{1}& q^3\frac{(q^n+1)(q^n-1)(q^{2n-4}-1)}{(q^2-1)^2}& 4n-5& n>5\cr
\binom{0,1,2,n}{1}& \hlf q^4\frac{(q^n-1)(q^{2n-2}-1)(q^{n-2}-1)}{q^4-1}& 4n-4&
  n>5\text{ or }(n,q)=(3,2)\cr
\binom{0,1,2}{1,n}& \hlf q^4\frac{(q^n+1)(q^{2n-2}-1)(q^{n-2}+1)}{q^4-1}& 4n-4& n>5\cr
\binom{1,2,n}{0,1}& \hlf q^4\frac{(q^n+1)(q^{2n-2}-1)(q^{n-2}-1)}{(q^2-1)^2}& 4n-4& n>5\cr
\binom{0,1,n}{1,2}& \hlf q^4\frac{(q^n-1)(q^{2n-2}-1)(q^{n-2}+1)}{(q^2-1)^2}& 4n-4& n>5\cr
\hline
\binom{0,2}{2}& q^2\frac{q^6-1}{q^2-1}& 6& n=3\cr
\hline
\end{array}\]
\end{table}

\begin{proof}
The degree polynomials of unipotent characters for $n\le9$ can be computed
using {\tt Chevie} \cite{Chv}. For $q<20$, a direct evaluation of these
polynomials gives the claim. For $q>20$, the claim then follows by easy
estimates using the explicit formulas. For $n\ge10$ the assertion is shown
in \cite{Ng10}.
\end{proof}

We now enumerate the complex irreducible characters of $\Spin_{2n+1}(q)$ of
small degree. The irreducible character degrees of families of groups of fixed
Lie type over the field $\FF_q$ can be written as polynomials in $q$.
It turns out that the smallest such degree polynomials for groups of type
$B_n$ have degree in $q$ around $2n$, while the next larger ones have degree
in $q$ around $4n$. We list all irreducible characters whose degrees lie
in the first range. Note that the complex irreducible character of smallest
non-trivial degree for orthogonal groups was determined in \cite{TZ}.
For $n\ge5$, the following has been shown in \cite[Th.~1.2]{Ng10}.

\begin{thm}   \label{thm:lowBn}
 Let $G=\Spin_{2n+1}(q)$ with $n\ge3$. If $\chi\in\Irr(G)$ is such that
 $$\chi(1)< \begin{cases} q^{4n-8}& \text{ when $n\ge5$},\\
                          (q^{2n}-q^{2n-1})/2& \text{ when $n\in\{3,4\}$},
 \end{cases}$$
 then $\chi$ is as given in Table~\ref{tab:allBn}, where
 $1_G,\rho_1,\ldots,\rho_4$ are the first five unipotent characters listed
 in Table~\ref{tab:unipB}.
\end{thm}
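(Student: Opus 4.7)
The plan is to apply the Jordan decomposition recalled before Proposition~\ref{prop:unipB}: each $\chi\in\Irr(G)$ lies in a unique Lusztig series $\cE(G,s)$ indexed by the conjugacy class of a semisimple element $s$ in the dual group $G^*$, and satisfies
\[
 \chi(1)=|G^*:C_{G^*}(s)|_{p'}\cdot J_s(\chi)(1),
\]
where $J_s(\chi)$ is a unipotent character of the centraliser $C_{G^*}(s)$. I would enumerate those pairs $(s,J_s(\chi))$ for which $\chi(1)$ lies below the stated bound.

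The unipotent case $s=1$ is handled directly by Proposition~\ref{prop:unipB}: scanning Table~\ref{tab:unipB} and comparing degrees with the bound $q^{4n-8}$ for $n\ge5$, respectively $(q^{2n}-q^{2n-1})/2$ for $n\in\{3,4\}$, the only characters that survive are the trivial character and the four unipotent characters of degree $\hlf q(q^n\pm1)(q^{n-1}\pm1)/(q\pm1)$, namely the $\rho_i$ of Table~\ref{tab:allBn}.

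For $s\ne 1$ the centraliser $C_{G^*}(s)$ in $G^*$ is a product of classical groups of $\Sp$, $\GL$ and $\GU$ type, the types and ranks of the factors being determined by the elementary divisors of $s$ on the natural module of the ambient conformal symplectic group. The index $|G^*:C_{G^*}(s)|_{p'}$ is a polynomial in $q$ whose degree equals the codimension of $C_{G^*}(s)$ in $G^*$, so the required bound on $\chi(1)$ forces this codimension to be small, pinning $s$ down to a short list of classes with very large centralisers. For each such class one multiplies $|G^*:C_{G^*}(s)|_{p'}$ by the smallest unipotent character degree of each factor of $C_{G^*}(s)$, using Proposition~\ref{prop:unipB} for any $\Sp$-factor and the well-known analogous results for the $\GL$- and $\GU$-factors, retaining only those products satisfying the bound for all prime powers $q$. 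For $n\ge 5$ this analysis is precisely the content of \cite[Thm.~1.2]{Ng10} and may be quoted directly; for $n\in\{3,4\}$ the same strategy applies but with a tighter bound, and since the rank is small the enumeration can be performed mechanically in \texttt{Chevie} by listing all semisimple classes of $G^*$, their centralisers, and the corresponding unipotent degree polynomials.

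The main obstacle lies in the small-rank cases $n=3,4$: in $G^*$ of type $C_3$ and $C_4$ several semisimple classes have centraliser index of degree close to $2n-1$ in~$q$, producing semisimple character degrees that are borderline with respect to $(q^{2n}-q^{2n-1})/2$. A sharp comparison of degree polynomials, supplemented if necessary by direct evaluation for the finitely many small prime powers~$q$ where leading-term arguments are inconclusive, is needed to confirm that no further character slips below the bound, so that Table~\ref{tab:allBn} is indeed complete.
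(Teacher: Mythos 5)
Your proposal is correct and follows essentially the same overall strategy as the paper, but the paper's actual proof is considerably more terse. For $3\le n\le 8$ it simply consults L\"ubeck's online tables \cite{Lue} of all irreducible character degrees, checking the claim by direct evaluation for $q<30$ and by leading-coefficient estimates of the degree polynomials for $q>30$; for $n\ge 9$ it cites \cite{Ng10} directly. It does \emph{not} carry out the Jordan decomposition analysis inside the proof of completeness; the paragraph in the paper about the isolated involution $s$ with centraliser $\Sp_2(q)\circ\Sp_{2n-2}(q)$ (giving $\rho_{s,1},\rho_{s,q}$) and the elements $t$ with centraliser $\Sp_{2n-2}(q)\times\GL_1(q)$ or $\Sp_{2n-2}(q)\times\GU_1(q)$ (giving $\rho_t^\pm$) is added \emph{after} the completeness assertion, explicitly ``for later use'', not as part of the verification. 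You instead make that structural reasoning do the work: splitting off the unipotent case via Proposition~\ref{prop:unipB}, bounding $|G^*:C_{G^*}(s)|_{p'}$ to restrict the possible semisimple classes, quoting \cite[Thm.~1.2]{Ng10} for $n\ge5$, and proposing a \texttt{Chevie} enumeration plus small-$q$ direct evaluation for $n\in\{3,4\}$. Both routes are computational at the end; yours keeps the Lusztig-series bookkeeping visible, which is precisely the data the paper needs later anyway, while the paper's route is shorter because it offloads the small-rank bookkeeping to an existing database. The caveats you flag (disconnected centralisers in $\PCSp_{2n}(q)$, fusion of $t$ with $t^{-1}$, and the need for sharp low-$q$ comparisons when leading terms are close) are exactly the points one has to get right in the $n\in\{3,4\}$ cases, and they match what the paper's database check implicitly resolves.
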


\begin{table}[htbp]
 \caption{Smallest complex characters of $\Spin_{2n+1}(q)$, $n\ge3$}
  \label{tab:allBn}
\[\begin{array}{|l|l|l|l|l|}
\hline
 \chi& \chi(1)& \#\text{ ($q$ odd)}& \#\text{ ($q$ even)}& \deg_q(\chi(1))\cr
\hline
 1_G& 1& 1& 1& 0\cr
 \rho_{s,1}& (q^{2n}-1)/(q^2-1)& 1& 0& 2n-2\cr
 \rho_1& \hlf q(q^n-1)(q^{n-1}-1)/(q+1)& 1& 1& 2n-1\cr
 \rho_2& \hlf q(q^n+1)(q^{n-1}+1)/(q+1)& 1& 1& 2n-1\cr
 \rho_3& \hlf q(q^n+1)(q^{n-1}-1)/(q-1)& 1& 1& 2n-1\cr
 \rho_4& \hlf q(q^n-1)(q^{n-1}+1)/(q-1)& 1& 1& 2n-1\cr
 \rho_t^-& (q^{2n}-1)/(q+1)& (q-1)/2& q/2& 2n-1\cr
 \rho_{s,q}& q(q^{2n}-1)/(q^2-1)& 1& 0& 2n-1\cr
 \rho_t^+& (q^{2n}-1)/(q-1)& (q-3)/2& (q-2)/2& 2n-1\cr
\hline
\end{array}\]
\end{table}

\begin{proof}
For $3\le n\le 8$, the complete list of ordinary irreducible characters of $G$
and their degrees can be found on the website \cite{Lue}. For $q<30$, the claim
can then be checked by computer, while for $q>30$, an easy estimate, using the
known degrees in $q$ of the degree polynomials, shows that the given list is
complete. \par
For $n\ge9$ the result is in \cite{Ng10}. For later use let us recall the
origin of the various non-unipotent characters listed in Table~\ref{tab:allBn}
in Lusztig's parametrisation of characters in terms of semisimple classes in
the dual group $G^*=\PCSp_{2n}(q)$. 

Let $s\in G^*$ be an isolated involution with centraliser
$\Sp_{2}(q)\circ\Sp_{2n-2}(q)$. The corresponding Lusztig series $\cE(G,s)$ is
in bijective correspondence under Jordan decomposition with the unipotent
characters of $\Sp_{2}(q)\circ\Sp_{2n-2}(q)$. Thus, we obtain the semisimple
character $\rho_{s,1}$ and the character $\rho_{s,q}$ corresponding to the
Steinberg character in the $\Sp_{2}(q)$-factor (both given in
Table~\ref{tab:allBn}), while all other characters in that series have degree
at least $\rho_{s,1}(1)q(q^{n-1}-1)(q^{n-2}-1)/(q+1)/2$ (see
Table~\ref{tab:unipB}), which is larger than our bound. \par
The other characters arise from elements in $G^*$ with centraliser
$\Sp_{2n-2}(q)\times\GL_1(q)$ or $\Sp_{2n-2}(q)\times\GU_1(q)$. There are
$q-3$ central elements $t$ in $\GL_1(q)$ of order larger than~2, which are
fused to their inverses in $G^*$. The corresponding Lusztig series contain the
semisimple characters $\rho_t^+$ from Table~\ref{tab:allBn}. Moreover, the
$q-1$ elements in $\GU_1(q)$ of order larger than~2 give rise to the $(q-1)/2$
semisimple characters $\rho_t^-$. All other characters in these Lusztig
series have too large degree.
\end{proof}

\subsection{The even-dimensional spin groups $\Spin_{2n}^\pm(q)$}

For the even-dimensional spin groups $\Spin_{2n}^+(q)$ of plus-type, the
unipotent characters are parametrised by symbols of rank~$n$ and defect
$d\equiv0\pmod4$, while for those of minus-type, the parametrisation is by
symbols of defect $d\equiv2\pmod4$.

\begin{prop}   \label{prop:unipD}
 Let $\chi$ be a unipotent character of $\Spin_{2n}^+(q)$ of degree
 $$\chi(1)\le \begin{cases}
    (q^{6n-16}-q^{6n-17})/2& \text{ when $n\ge8$},\\
    q^{4n-5}-q^{4n-7}& \text{ when $4\le n\le 7$}.
 \end{cases}$$
 Then $\chi$ is as given in Table~\ref{tab:unipD}.
\end{prop}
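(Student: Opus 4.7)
The plan is to proceed exactly as in the proof of Proposition~\ref{prop:unipB}, replacing symbols of odd defect by symbols of defect divisible by $4$. The unipotent characters of $\Spin_{2n}^+(q)$ are parametrised by equivalence classes of symbols of rank $n$ and defect $d\equiv0\pmod 4$, and each carries a known generic degree polynomial in $q$. So the claim reduces to identifying the symbols whose polynomial degree in $q$ is at most the exponent of the leading term of the given bound, namely $4n-5$ for $4\le n\le7$ and $6n-16$ for $n\ge8$.

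First I would handle moderate ranks, say $4\le n\le 9$, by reading off from Chevie \cite{Chv} the complete list of unipotent characters of $\Spin_{2n}^+(q)$ together with their generic degree polynomials. For $q$ up to some modest bound (e.g.\ $q\le 20$) the claim is verified by direct numerical evaluation of these polynomials against the stated bound. For $q$ above that bound, an elementary comparison of leading terms suffices, since every character not appearing in Table~\ref{tab:unipD} has degree polynomial of strictly larger $q$-degree than the threshold.

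For $n\ge 10$ the argument becomes combinatorial. For a symbol $S=(X,Y)$ of rank $n$ the $q$-valuation of the generic degree is the $a$-invariant $a(S)$, which is computable directly from the entries of $X$ and $Y$. One enumerates all equivalence classes of symbols of rank $n$ and defect divisible by $4$ whose $a$-invariant is strictly less than $6n-16$: these turn out to be exactly the symbols underlying the characters listed in Table~\ref{tab:unipD} (the trivial symbol, the four ``rotations'' of $\binom{0,1,n-1}{-}$, and the corresponding families with one extra small entry). Every other symbol has $a$-invariant at least $6n-16$, and a crude lower bound on the leading coefficient of its degree polynomial then shows that the corresponding character exceeds the bound.

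The main obstacle is the combinatorial bookkeeping for large $n$. One must enumerate equivalence classes of symbols of rank $n$ and defect $\equiv 0\pmod 4$ with small $a$-invariant, respecting both the swap equivalence $(X,Y)\sim(Y,X)$ (which in defect zero identifies pairs of non-degenerate symbols and splits the degenerate ones) and the shift equivalence $(X,Y)\sim(\{0\}\cup(X+1),\{0\}\cup(Y+1))$, and then rule out, by an explicit estimate on the degree formula, every symbol whose $a$-invariant lies just above the threshold. This is the plus-type analogue of Nguyen's combinatorial argument \cite{Ng10} for type $B$, and requires no essentially new ideas beyond careful case analysis of the symbols bordering the threshold.
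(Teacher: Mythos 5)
Your strategy — Chevie verification for small $n$ plus a Nguyen-style combinatorial analysis of symbols for large $n$ — is essentially the paper's, which handles $n\le8$ computationally as in Proposition~\ref{prop:unipB} and $n\ge9$ by a variation of \cite[Prop.~3.4]{Ng10}. One caution in the combinatorial step: what must be bounded to keep $\chi(1)$ below the threshold is the $q$-degree of the generic degree polynomial (the quantity $\deg_q\chi_S(1)$ tabulated in Table~\ref{tab:unipD}), not the $a$-invariant $a(S)$, which is the $q$-\emph{valuation} of that polynomial; enumerating symbols with ``$a(S)<6n-16$'' as written would not produce the correct set. A smaller slip: $\binom{0,1,n-1}{-}$ has odd defect and so parametrises a type-$B_n$ unipotent character, not a type-$D_n$ one; the relevant small-degree symbols of defect $0$ or $4$ are those actually listed in Table~\ref{tab:unipD}. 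With those corrections the argument proceeds as the paper indicates.
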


\begin{table}[htbp]
 \caption{Small unipotent characters in type $D_n$}
  \label{tab:unipD}
\[\begin{array}{|l|l|l|l|}
\hline
     S& \chi_S(1)& \deg_q(\chi_S(1))& \text{conditions}\cr
\hline
\binom{n}{0}& 1& 0& \cr
\binom{n-1}{1}& q\frac{(q^n-1)(q^{n-2}+1)}{q^2-1}& 2n-3& \cr
\binom{1, n}{0, 1}& q^2\frac{q^{2n-2}-1}{q^2-1}& 2n-2& \cr
\hline
\binom{n-2}{2}& q^2\frac{(q^n-1)(q^{2n-2}-1)(q^{n-4}+1)}{(q^2-1)(q^4-1)}& 4n-10& n>4\cr
\binom{0, 1, 2, n-1}{-}& \hlf q^3\frac{(q^n-1)(q^{n-1}-1)(q^{n-2}-1)(q^{n-3}-1)}{(q+1)^2(q^2+1)}& 4n-7& \cr
\binom{0, n-1}{1, 2}& \hlf q^3\frac{(q^n-1)(q^{n-1}-1)(q^{n-2}+1)(q^{n-3}+1)}{(q^2-1)^2}& 4n-7& \cr
\binom{1, n-1}{0, 2}& \hlf q^3\frac{(q^n-1)(q^{n-1}+1)(q^{n-2}-1)(q^{n-3}+1)}{(q-1)^2(q^2+1)}& 4n-7& \cr
\binom{2, n-1}{0, 1}& \hlf q^3\frac{(q^n-1)(q^{n-1}+1)(q^{n-2}+1)(q^{n-3}-1)}{(q^2-1)^2}& 4n-7& \cr
\binom{1, 2, n}{0, 1, 2}& q^6\frac{(q^{2n-2}-1)(q^{2n-4}-1)}{(q^2-1)(q^4-1)}& 4n-6& \cr
\hline
\binom{n-3}{3}& q^3\frac{(q^n-1)(q^{2n-2}-1)(q^{2n-4}-1)(q^{n-6}+1)}{(q^2-1)(q^4-1)(q^6-1)}& 6n-21& n>6\cr
\hline
\binom{2}{2}& q^2\frac{q^6-1}{q^2-1}\qquad (2\times)& 6& n=4\cr
\binom{1,2}{1,2}& q^6\frac{q^6-1}{q^2-1}\qquad (2\times)& 10& n=4\cr
\binom{0, 3}{1, 3}& q^4\frac{(q^8-1)(q^5-1)}{(q-1)(q^2-1)}& 14& n=5,\ q>2\cr
\binom{0,1,2,3,4}{1}& \hlf q^7(q^5-1)(q^3-1)(q-1)^2& 17& n=5,\ q=2\cr
\binom{3}{3}& q^3\frac{(q^4+1)(q^{10}-1)}{q^2-1}\qquad (2\times)& 15& n=6\cr
\binom{0, 1, 3, 4}{-}& \hlf q^4\frac{(q^{10}-1)(q^3-1)^2(q-1)}{q+1}& 20& n=6,\ q=2,3\cr
\binom{0, 1, 3, 5}{-}& \hlf q^4\frac{(q^{12}-1)(q^7-1)(q^5-1)(q-1)}{q^3+1}& 26& n=7\cr
\binom{4}{4}& q^4\frac{(q^{14}-1)(q^{10}-1)(q^6+1)}{(q^2-1)(q^4-1)}\qquad (2\times)& 28& n=8\cr
\binom{5}{4}& q^4\frac{(q^9-1)(q^8+1)(q^{14}-1)(q^6+1)}{(q-1)(q^4-1)}& 36& n=9,\ q>2\cr
\hline
\end{array}\]
\end{table}

\begin{proof}
The (more involved) case $n\le8$ can be handled computationally as indicated
in the proof of Proposition~\ref{prop:unipB}. For $n\ge9$ this is shown
by a slight variation of the arguments used in the proof of
\cite[Prop.~3.4]{Ng10} which gives the list of unipotent characters of degree
at most $q^{4n-10}$.
\end{proof}

Similarly we obtain:

\begin{prop}   \label{prop:unip2D}
 Let $\chi$ be a unipotent character of $\Spin_{2n}^-(q)$ of degree
 $$\chi(1)\le \begin{cases}
    (q^{6n-16}-q^{6n-17})/2& \text{ when $n\ge8$},\\
    q^{4n-5}-q^{4n-7}& \text{ when $4\le n\le 7$.}
 \end{cases}$$
 Then $\chi$ is as given in Table~\ref{tab:unip2D}.
\end{prop}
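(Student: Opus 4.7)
The plan is to mirror the proof of Proposition~\ref{prop:unipD} throughout, exploiting the fact that unipotent characters of $\Spin_{2n}^-(q)$ are parametrised by equivalence classes of symbols of rank~$n$ and defect $d\equiv2\pmod4$, with character degrees given by Lusztig's explicit polynomials in $q$. The $q$-adic valuation of each degree (the $a$-value attached to the symbol) is determined by the positions of the entries of $(X,Y)$, and controlling this valuation is the key to isolating the ``small'' symbols.

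For small ranks $4\le n\le 8$, I would enumerate the unipotent characters and their degree polynomials using {\tt Chevie}~\cite{Chv}, as in the proof of Propositions~\ref{prop:unipB} and~\ref{prop:unipD}. For each fixed $q$ below some explicit cutoff (say $q<30$) a direct evaluation confirms the listed characters, while for larger $q$ an elementary estimate using the leading coefficients of the degree polynomials shows that no further unipotent character falls below the bound. For $n\ge 9$, the strategy is to adapt the argument of \cite[Prop.~3.4]{Ng10}, which treated the defect $\equiv 0\pmod 4$ case. Concretely, one sorts the rank-$n$ symbols by $a$-value and observes that a small $a$-value forces a symbol $(X,Y)$ to consist of a single large entry close to $n$ together with a few small entries: the penalty $\lfloor((r+s-1)/2)^2\rfloor$ in the rank formula grows quickly with $r+s$, so long symbols automatically have large $a$-value. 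One enumerates these few candidate symbols up to equivalence, reads off their generic degrees from Lusztig's formula, and checks which of them lie below the threshold $(q^{6n-16}-q^{6n-17})/2$; only those in Table~\ref{tab:unip2D} survive.

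The main obstacle I anticipate is the combinatorial bookkeeping needed to transfer Nguyen's enumeration from defect $\equiv 0\pmod 4$ to defect $\equiv 2\pmod 4$: the minimal defect available changes from~$0$ to~$2$, which shifts the shape of the ``smallest'' symbols and removes the degenerate-symbol phenomenon that doubles certain characters in type $D_n^+$. Accordingly, the list of low-$a$-value candidates must be recompiled with the correct parity constraint on $|X|-|Y|$, and the equivalence relation (which for defect~$2$ identifies $(X,Y)$ with $(\{0\}\cup(X+1),\{0\}\cup(Y+1))$ but not with $(Y,X)$) must be used carefully to avoid double counting. Once the correct list of candidate symbols with small $a$-value is obtained, the monotonicity estimates that rule out the remaining symbols and the verification of the entries of Table~\ref{tab:unip2D} are mechanical and proceed verbatim as in \cite{Ng10}.
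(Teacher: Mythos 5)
Your overall strategy — explicit computation (via {\tt Chevie} and degree-polynomial estimates) for small rank, and adaptation of Nguyen's arguments for $n\ge9$ — is exactly what the paper does, so the approach is sound. However, you are adapting the wrong result: you propose to transfer \cite[Prop.~3.4]{Ng10}, which treats the split type $D_n$ (defect $\equiv 0\pmod 4$), across to the defect $\equiv 2\pmod4$ setting, and you identify that transfer as the ``main obstacle.'' In fact \cite[Prop.~3.3]{Ng10} already treats the non-split type $\tw2D_n$ directly, and the paper simply cites that proposition; the only variation needed is to push Nguyen's degree cutoff $q^{4n-10}$ up to $(q^{6n-16}-q^{6n-17})/2$, which enlarges the finite list of candidate symbols with small $a$-value but requires no change of defect class. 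Your anticipated combinatorial bookkeeping of ``recompiling the list with the correct parity constraint'' is therefore already done in the literature and need not be reproduced.

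One further inaccuracy: you assert that for defect~$2$ the equivalence relation on symbols does not include the swap $(X,Y)\leftrightarrow(Y,X)$. The paper's definition (and the standard one) declares $(Y,X)$ equivalent to $(X,Y)$ for all symbols, independent of defect; what distinguishes $\tw2D_n$ is merely the absence of degenerate symbols $X=Y$ (since the defect is at least~$2$), so the doubling phenomenon of type $D_n$ disappears for that reason, not because the swap is dropped. This error is harmless for the enumeration but would mislead anyone trying to reconstruct the parametrisation from your description.
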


Again, see \cite[Prop.~3.3]{Ng10} for the list of unipotent characters of
degree at most $q^{4n-10}$.

\begin{table}[htbp]
 \caption{Small unipotent characters in type $\tw2D_n$}
  \label{tab:unip2D}
\[\begin{array}{|l|l|l|l|}
\hline
     S& \chi_S(1)& \deg_q(\chi_S(1))& \text{conditions}\cr
\hline
\binom{0, n}{-}& 1& 0& \cr
\binom{1, n-1}{-}& q\frac{(q^n+1)(q^{n-2}-1)}{q^2-1}& 2n-3& \cr
\binom{0, 1, n}{ 1}& q^2\frac{(q^{2n-2}-1)}{q^2-1}& 2n-2& \cr
\hline
\binom{2, n-2}{-}& q^2\frac{(q^n+1)(q^{2n-2}-1)(q^{n-4}-1)}{(q^4-1)(q^2-1)}& 4n-10& n>4\cr
\binom{1, 2, n-1}{ 0}& \hlf q^3\frac{(q^n+1)(q^{n-1}+1)(q^{n-2}-1)(q^{n-3}-1)}{(q^2-1)^2}& 4n-7& \cr
\binom{0, 2, n-1}{ 1}& \hlf q^3\frac{(q^n+1)(q^{n-1}-1)(q^{n-2}+1)(q^{n-3}-1)}{(q-1)^2(q^2+1)}& 4n-7& \cr
\binom{0, 1, n-1}{ 2}& \hlf q^3\frac{(q^n+1)(q^{n-1}-1)(q^{n-2}-1)(q^{n-3}+1)}{(q^2-1)^2}& 4n-7& \cr
\binom{0, 1, 2}{ n-1}& \hlf q^3\frac{(q^n+1)(q^{n-1}+1)(q^{n-2}+1)(q^{n-3}+1)}{(q+1)^2(q^2+1)}& 4n-7& \cr
\binom{0, 1, 2, n}{ 1, 2}& q^6\frac{(q^{2n-2}-1)(q^{2n-4}-1)}{(q^2-1)(q^4-1)}& 4n-6& \cr
\hline
\binom{3, n-3}{-}& q^3\frac{(q^n+1)(q^{2n-2}-1)(q^{2n-4}-1)(q^{n-6}-1)}{(q^2-1)(q^4-1)(q^6-1)}& 6n-21& n>6\cr
\hline
\binom{4, 5}{-}& q^4\frac{(q^9+1)(q^8+1)(q^{14}-1)(q^6+1)(q-1)}{(q^2-1)(q^4-1)}& 36& n=9\cr
\binom{0, 1, 3}{ 3}& q^4\frac{(q^8-1)(q^5+1)}{(q+1)(q^2-1)}& 14& n=5\cr
\hline
\end{array}\]
\end{table}

The following two results have already been shown in
\cite[Th.~1.3 and~1.4]{Ng10} when $n\ge5$.

\begin{thm}   \label{thm:lowDn}
 Let $G=\Spin_{2n}^+(q)$. If $\chi\in\Irr(G)$ is such that
 $$\chi(1)< \begin{cases}
    q^{4n-10}& \text{ when $n\ge6$, or $n=5$ and $q$ is odd},\\
    q^{10}-q^8& \text{ when $n=5$ and $q$ is even},\\
    (q^8-2q^6)/4& \text{ when $n=4$ and $q$ is odd},\\
    q^8-q^7+q^5& \text{ when $n=4$ and $q$ is even},
 \end{cases}$$
 then $\chi$ is as given in Table~\ref{tab:allDn}, or $(n,q)=(4,2)$ and
 $\chi(1)=28$.
\end{thm}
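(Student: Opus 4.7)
The plan is to follow exactly the same strategy as in the proof of Theorem~\ref{thm:lowBn}. The starting point is Lusztig's Jordan decomposition, which reduces the problem to bounding, for each semisimple class $[s]\subset G^*$, the product
$$|G^*:C_{G^*}(s)|_{p'}\cdot J_s(\chi)(1),$$
where $J_s(\chi)$ is a unipotent character of $C_{G^*}(s)$. For the bound $\chi(1) < q^{4n-10}$ (or the variants for small $n$) to be feasible, either $s=1$ (so $\chi$ is unipotent and Proposition~\ref{prop:unipD} applies directly), or $C_{G^*}(s)$ must be very large, which forces $s$ to lie in one of a very small list of conjugacy classes: the isolated involutions with centraliser of type $D_a^\eps\times D_{n-a}^{\eps'}$, and the (non-isolated) semisimple elements with centraliser of the form $\GL_1(q)\times \Spin_{2n-2}^\pm(q)$ or $\GU_1(q)\times \Spin_{2n-2}^\pm(q)$.

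For $n\geq 5$ the statement is essentially \cite[Th.~1.3]{Ng10}; the only new work is to push the bound up from $q^{4n-10}$ to $q^{10}-q^8$ when $n=5$ and $q$ is even, which amounts to checking the finite list of unipotent character degrees in Table~\ref{tab:unipD} and the semisimple series above against the slightly enlarged window. For $n=4$ (the genuinely new case) and for small $q$, the complete list of irreducible character degrees of $\Spin_8^+(q)$ is available from Lübeck's tables \cite{Lue}; for $q\leq 30$ or so the claim is verified directly, and for $q > 30$ one uses that there are finitely many degree polynomials in $q$ for fixed Lie type, so straightforward polynomial estimates suffice. The exceptional entry $\chi(1)=28$ at $(n,q)=(4,2)$ appears already in the Lübeck data and corresponds to a specific unipotent character whose degree polynomial collapses at $q=2$.

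Once the candidate list is assembled, it remains to identify each character in Lusztig's parametrisation so that the list can be recorded as Table~\ref{tab:allDn} in a form parallel to Table~\ref{tab:allBn}; in particular, the characters analogous to $\rho_t^\pm$ in type $B_n$ arise from the semisimple classes $s$ with $C_{G^*}(s)=\GL_1(q)\times \Spin_{2n-2}^\pm(q)$ or $\GU_1(q)\times \Spin_{2n-2}^\pm(q)$ and contribute the expected numbers of semisimple characters. The main obstacle is the $D_4$ case: the triality automorphism inflates the automorphism group of $G$ to $S_3$ and causes the symmetric unipotent symbols $\binom{2}{2}$ and $\binom{1,2}{1,2}$ to appear twice in Table~\ref{tab:unipD}, and also permutes the three classes of involutions with centraliser of type $D_3^\pm\times D_1^\pm$. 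One must therefore check carefully that the extra triality-linked characters and their images under the triality orbit all still fit into Table~\ref{tab:allDn}, and that no new character slips in below the bound. After this, the uniform presentation of the table for all $n\geq 4$ follows and the proof is complete.
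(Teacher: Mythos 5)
Your overall strategy matches the paper's: reduce via Lusztig's Jordan decomposition, handle small $n$ by direct comparison against L\"ubeck's tables of character degree polynomials (computer check for small $q$, degree estimates for large $q$), and cite Nguyen~\cite{Ng10} for the remaining cases. The paper simply draws the line differently --- it does the L\"ubeck/computer check for $3\le n\le 7$ (up to $q<50$) and invokes \cite[\S6,7]{Ng10} only for $n\ge 8$, whereas you propose to handle $n=4$ computationally and lean on Nguyen for all $n\ge 5$; both are legitimate so long as Nguyen's thresholds actually cover the stated bounds for $5\le n\le 7$. One concrete confusion to fix: for $n=5$ and $q$ even the theorem's bound is $q^{10}-q^8$, which is \emph{strictly smaller} than $q^{4n-10}=q^{10}$, so this is a \emph{narrower} window than for $q$ odd and requires no ``pushing up'' --- if Nguyen already gives $q^{4n-10}$ for $n=5$, the $q$-even case follows a fortiori; if Nguyen's bound at $n=5$ even is weaker, you need to identify what fails and close it explicitly, not just assert it as a bound-enlarging step. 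Finally, your worry about triality in the $D_4$ case is sensible in principle, but the paper sidesteps it by verifying degrees directly from \cite{Lue}; if you want to argue structurally you would need to actually track the triality orbit on the symmetric symbols and on the classes of involutions, which your proposal flags but does not carry out.
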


\begin{proof}
For $3\le n\le 7$, the complete list of ordinary irreducible characters of $G$
and their degrees can be found on the website \cite{Lue}. For $q<50$, the claim
can then be checked by computer, while for $q>50$, an easy estimate shows that
the given list is complete. \par
For $n\ge8$ we refer to \cite[\S6,7]{Ng10}. Here, $1_g,\rho_1,\rho_2$ denote the 
first three unipotent characters listed in Table~\ref{tab:unipD}. The characters
$\rho_{s,a}^\pm,\rho_{s,b}^\pm$ are the semisimple characters in the Lusztig
series of involutions with disconnected centraliser of type
$\PCO_{2n-2}^\pm(q)$, and the characters $\rho_t^\pm$ are the semisimple
characters in the Lusztig series of semisimple elements with (connected)
centraliser of type $\PCSO_{2n-2}^\pm(q)$.
\end{proof}

\begin{table}[htbp]
 \caption{Smallest complex characters of $\Spin_{2n}^\eps(q)$, $n\ge4$}
  \label{tab:allDn}
\[\begin{array}{|l|l|l|l|l|}
\hline
 \chi& \chi(1)& \#\text{ ($q$ odd)}& \#\text{ ($q$ even)}& \deg_q(\chi(1))\cr
\hline
 1_G& 1& 1& 1& 0\cr
 \rho_1& q(q^n-\eps1)(q^{n-2}+\eps1)/(q^2-1)& 1& 1& 2n-3\cr
 \rho_{s,a}^-,\rho_{s,b}^-& \hlf(q^n-\eps1)(q^{n-1}-\eps1)/(q+1)& 2& 0& 2n-2\cr
 \rho_{s,a}^+,\rho_{s,b}^+& \hlf(q^n-\eps1)(q^{n-1}+\eps1)/(q-1)& 2& 0& 2n-2\cr
 \rho_t^-& (q^n-\eps1)(q^{n-1}-\eps1)/(q+1)& (q-1)/2& q/2& 2n-2\cr
 \rho_2& q^2(q^{2n-2}-1)/(q^2-1)& 1& 1& 2n-2\cr
 \rho_t^+& (q^n-\eps1)(q^{n-1}+\eps1)/(q-1)& (q-3)/2& (q-2)/2& 2n-2\cr
\hline
\end{array}\]
\end{table}

\begin{thm}   \label{thm:low2Dn}
 Let $G=\Spin_{2n}^-(q)$. If $\chi\in\Irr(G)$ is such that
 $$\chi(1)< \begin{cases}
    q^{4n-10}& \text{ when $n\ge6$},\\
    q^{10}-q^9& \text{ when $n=5$},\\
    (q^8-2q^6)/2& \text{ when $n=4$ and $q$ is odd},\\
    q^8-q^6& \text{ when $n=4$ and $q$ is even},
 \end{cases}$$
 then $\chi$ is as given in Table~\ref{tab:allDn}.
\end{thm}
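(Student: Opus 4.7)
The plan is to mirror the argument used for Theorem \ref{thm:lowDn}, adapting it to the minus-type case $G=\Spin_{2n}^-(q)$. For $n\ge5$ the result is already due to Nguyen \cite[Th.~1.4]{Ng10}, so only the case $n=4$ requires new work. For $n=4$ (and more generally $3\le n\le7$) the complete generic character table of $G$ is available on Lübeck's website \cite{Lue}; for $q<50$ the stated bound on $\chi(1)$ is checked directly by computer, while for $q\ge50$ the known $q$-polynomial shape of the character degrees allows an elementary estimate that rules out every entry outside Table \ref{tab:allDn}.

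As in the proof of Theorem \ref{thm:lowDn}, I would next identify the origin of the non-unipotent entries via Lusztig's Jordan decomposition
$$J_s:\cE(G,s)\longrightarrow\cE(C_{G^*}(s),1),\qquad G^*=\PCSO_{2n}^-(q),$$
together with $\chi(1)=|G^*:C_{G^*}(s)|_{p'}J_s(\chi)(1)$. The characters $1_G,\rho_1,\rho_2$ are the first three unipotent characters in Table~\ref{tab:unip2D}. The semisimple characters $\rho_{s,a}^\pm,\rho_{s,b}^\pm$ arise from the two classes of isolated involutions $s\in G^*$ with disconnected centraliser of type $\PCO_{2n-2}^{\mp}(q)$, the two labels $a,b$ reflecting the two components. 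The families $\rho_t^\pm$ correspond to non-involutive semisimple elements $t$ with connected centraliser of type $\PCSO_{2n-2}^{\mp}(q)\times\GL_1^{\mp}(q)$; the counts $(q-1)/2$, $q/2$, $(q-3)/2$, $(q-2)/2$ come from counting Frobenius-stable classes of such $t$ in the respective tori, exactly as in the plus-type argument but with the opposite signs.

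To see that no further characters meet the bound, one combines Proposition~\ref{prop:unip2D} (which exhausts the unipotent characters below the bound) with a standard lower bound on $|G^*:C_{G^*}(s)|_{p'}$ for any remaining semisimple class $s\ne 1$: if $C_{G^*}(s)$ is not of one of the maximal-rank types listed above, then $[G^*:C_{G^*}(s)]_{p'}$ already exceeds $q^{4n-10}$, and multiplying by any nontrivial unipotent degree of $C_{G^*}(s)$ keeps $\chi(1)$ above the cutoff. The main obstacle lies in the $n=4$ case: the sporadic extra unipotent characters in the bottom rows of Table~\ref{tab:unip2D} (and the comparatively large number of small-index centralisers available in small rank) make the uniform estimates delicate, so one genuinely needs the computer-assisted verification for small $q$ in tandem with the polynomial estimates for large $q$ to conclude that Table~\ref{tab:allDn} is complete, with the isolated exception $(n,q)=(4,2)$ already flagged in Theorem~\ref{thm:lowDn} being absent here because $\Spin_8^-(2)$ does not admit the analogous exceptional character.
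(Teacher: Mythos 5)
Your proposal matches the paper's proof in essence: both dispose of large $n$ by citing Nguyen \cite{Ng10} and of small $n$ via direct verification against L\"ubeck's explicit degree data \cite{Lue}, then identify the non-unipotent entries in Table~\ref{tab:allDn} via Lusztig's Jordan decomposition applied to the relevant isolated involutions and regular semisimple elements in $G^*$. One small improvement in your write-up: you correctly observe that for minus-type $G$ the orthogonal factor in $C_{G^*}(s)$ (resp.\ $C_{G^*}(t)$) has type \emph{opposite} to the superscript on $\rho_{s,a}^\pm$, $\rho_t^\pm$, whereas the paper's phrasing (``of type $\PCO_{2n-2}^\pm(q)$'', ``$\PCSO_{2n-2}^\pm(q)$'') is copied verbatim from the plus-type case and is slightly imprecise for $\eps=-$; your degree formulas confirm the flipped sign.
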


\begin{proof}
Again, the case $n\le7$ can be settled using the data in \cite{Lue}, while for
$n\ge8$, we refer to \cite[\S6,7]{Ng10} (see Table~\ref{tab:unip2D} for the
unipotent characters). As before, $1_G,\rho_1,\rho_2$ denote the first three
unipotent characters listed in Table~\ref{tab:unip2D}. The characters $\rho_{s,a}^\pm,\rho_{s,b}^\pm$ lie in
the Lusztig series of involutions with disconnected centraliser of type
$\PCO_{2n-2}^\pm(q)$, and the $\rho_t^\pm$ are the semisimple characters in
the Lusztig series of semisimple elements with centraliser of type
$\PCSO_{2n-2}^\pm(q)$.
\end{proof}

\section{Locating Brauer characters of low degree}   \label{sec:parabolic}

Here we study the restriction of small dimensional irreducible $\ell$-Brauer
characters of spin groups to an end node parabolic subgroup. This requires no
assumptions on $\ell$ or on $q$. Throughout this section let
$G=\Spin_m^{(\pm)}(q)$ with $m\ge5$ and let $P= QL$ be a fixed maximal
parabolic subgroup of $G$ stabilising a singular $1$-space of the natural
module of $\SO_m^{(\pm)}(q)$, with unipotent radical $Q$ and Levi factor $L$.
Observe that $Q\cong \FF_q^{m-2}$ is the natural module for $L':=[L,L]$ of type
$\Spin_{m-2}^{(\pm)}(q)$. For $\chi \in Q^*:=\Irr(Q)=\Hom(Q,\CC^\times)$ we
denote by $L_\chi$ its inertia group in~$L$.

Let $k$ be an algebraically closed field of characteristic~$\ell$ not dividing
$q$. Let $W$ be a $kG$-module. Then the restriction of $W$ to $Q$ is
semisimple and we have a direct sum decomposition $W|_Q=\bigoplus_\chi W_\chi$
into the $Q$-weight spaces 
$$W_\chi:=\{w\in W \mid x.w=\chi(x) w \text{\ for all }x \in Q\}\qquad
  \text{for }\chi\in Q^*,$$
that is, the $Q$-isotypic components. The following notion was
introduced in \cite{MRT}: a $kG$-module $W$ is called \emph{$Q$-linear small}
if for all $\chi \in Q^*$ the simple $L_\chi'$-submodules of $\Soc(W_\chi)$ are
trivial. A module not satisfying this property is called \emph{$Q$-linear
large}.

The following is well-known:

\begin{lem}   \label{lem:Qchars}
 Let $\phi\in\Hom(\FF_q,\CC^\times)$ be a non-trivial linear character. Then
 \begin{enumerate}
  \item[\rm(a)]  $\sum_{a \in \FF_q} \phi(a) = 0$,
  \item[\rm(b)]  $\sum_{a \in \FF_q^\times} \phi(a) = -1$.
 \end{enumerate}
\end{lem}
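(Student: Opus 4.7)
The plan is to reduce part (b) to part (a) via the trivial identity $\phi(0)=1$, so the real content is (a). For (a), I would use the standard orthogonality-style translation trick: since $\phi$ is non-trivial, there exists $b\in\FF_q$ with $\phi(b)\ne 1$.

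Writing $S:=\sum_{a\in\FF_q}\phi(a)$, I would change variables $a\mapsto a+b$ in the sum and use that $\phi$ is a homomorphism on the additive group $(\FF_q,+)$ to obtain
$$S=\sum_{a\in\FF_q}\phi(a+b)=\phi(b)\sum_{a\in\FF_q}\phi(a)=\phi(b)\,S.$$
Hence $(\phi(b)-1)S=0$ in $\CC$, and since $\phi(b)\ne 1$ this forces $S=0$, which is (a). For (b), I split off the $a=0$ term: $0=S=\phi(0)+\sum_{a\in\FF_q^\times}\phi(a)=1+\sum_{a\in\FF_q^\times}\phi(a)$, giving the claimed value $-1$.

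There is essentially no obstacle here; this is a one-line consequence of the fact that a non-trivial character of a finite abelian group has vanishing sum. The only thing to be careful about is to apply the translation in the additive group (the character $\phi$ is additive, not multiplicative), so that the substitution $a\mapsto a+b$ really does yield the factor $\phi(b)$.
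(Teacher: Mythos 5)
Your proof is correct, but it takes a genuinely different route from the paper's. You use the standard translation trick: pick $b\in\FF_q$ with $\phi(b)\ne 1$, substitute $a\mapsto a+b$ to get $S=\phi(b)S$, and conclude $S=0$. This argument is completely general --- it is the usual proof that a non-trivial character of any finite abelian group has vanishing sum, and it needs nothing about $\FF_q$ beyond the fact that $(\FF_q,+)$ is a finite group. The paper instead exploits the specific structure of $\FF_q$ as an elementary abelian $p$-group: it observes that the values of $\phi$ are $p$-th roots of unity, decomposes $\FF_q$ into the cosets of $\ker(\phi)$ (a subgroup of index $p$), identifies the quotient with $\FF_p$, and reduces the computation to the fact that the full sum of $p$-th roots of unity is $0$. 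Both arguments are short and correct; yours is a little more streamlined and more portable (it would go through verbatim for multiplicative characters, or indeed for any finite abelian group), whereas the paper's version is more concrete about what $\phi$ looks like, which is harmless here but offers nothing extra for this lemma. The reduction of (b) to (a) by splitting off the $a=0$ term is the same in both.
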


\begin{proof}
Clearly (b) follows from (a). To see (a) note that the values of $\phi$ are
the $p$-th roots of unity, where $q$ is a power of $p$. Also $\phi$ is constant
on the cosets of $\ker(\phi)$, thus
$$\sum_{a\in\FF_q}\phi(a) = \sum_{k\in\FF_p}\sum_{a \in \ker(\phi)+k} \phi(a)
  = \sum_{k\in\FF_p} |\ker(\phi)|\phi(k) = |\ker(\phi)|\sum_{k\in\FF_p}\phi(k)
  = 0.$$
\end{proof}

\subsection{The odd-dimensional spin groups}

Let $G=\Spin_{2n+1}(q)$. We first recall the $L'\cong\Spin_{2n-1}(q)$-orbit
structure on $Q \cong \FF_q^{2n-1}$ and its dual. The $L'$-module $Q$ admits an
$L'$-invariant non-degenerate quadratic form $F$. Then two non-zero elements
$x_1,x_2$ of $Q$ lie in the same $L'$-orbit if and only if $F(x_1) = F(x_2)$.
Thus apart from the trivial orbit there is one orbit of singular vectors of
length $q^{2n-2}-1$, $(q-1)/2$ orbits of length $q^{2n-2}+q^{n-1}$ of
plus-type, and $(q-1)/2$ orbits of length $q^{2n-2}-q^{n-1}$ of minus-type.

If $W$ is a $kG$-module, then for any $\chi\in Q^*$ we thus obtain a direct
summand $W^{(\eps,\mu)}=\sum_{\psi\in\chi^{L'}}W_\psi$ of the socle of $[W,Q]$,
where $\eps\in\{0,\pm\}$ indicates the type of the stabiliser of $\chi$ and
$\mu$ is an $L_\chi'$-character (a constituent of $\Soc(W_\chi)|_{L_\chi'}$).
Denote  the Brauer character of $W^{(\eps,\mu)}$ by $\chi^{(\eps,\mu)}$. We
also write $W^\eps$ for the sum of all $W^{(\eps,\mu)}$. 

\begin{lem}   \label{lem:WChi B}
 Let $G = \Spin_{2n+1}(q)$ with $n\ge2$ and $q$ odd, and $x\in Q$ be a
 long root element of $G$. Then:
 \begin{enumerate}
  \item[\rm(a)] $\chi^{(0,\mu)}(x) = -1$, and
  \item[\rm(b)] $\chi^{(\pm,\mu)}(x) = \pm q^{n-1}$.
 \end{enumerate}
\end{lem}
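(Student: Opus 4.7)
The overall plan is to reduce the Brauer character value to an elementary orbit sum, then evaluate that sum by completing the square and using standard Gauss sums. Since $x\in Q$ has order $p\neq\ell$, its Brauer character value equals its ordinary trace on $W^{(\eps,\mu)}$. Now $Q$ is abelian and acts on each weight space $W_\psi$ as the scalar $\psi(x)$, so
\[
\chi^{(\eps,\mu)}(x)=\sum_{\psi\in\chi^{L'}}\dim(W_\psi)\,\psi(x).
\]
By $L'$-equivariance, $\dim W_\psi$ is constant along the orbit $\chi^{L'}$, and after dividing by this common $L_\chi'$-multiplicity (the normalisation used in \cite{MRT}) both assertions reduce to computing the pure orbit sum $S_\eps:=\sum_{\psi\in\chi^{L'}}\psi(x)$.

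To compute $S_\eps$ I would use the polarisation $B(u,v):=F(u+v)-F(u)-F(v)$ of the $L'$-invariant quadratic form $F$ on $Q$ to identify $Q^*\cong Q$ equivariantly via a fixed non-trivial additive character $\phi\colon\FF_q\to\CC^\times$, setting $\psi(y)=\phi(B(v_\psi,y))$. Because $x$ is a long root element, its image $v_x\in Q$ lies in the orbit of non-zero singular vectors, so $F(v_x)=0$. The orbit sum becomes
\[
S_\eps=\sum_{w\in Q,\ F(w)=a_\eps}\phi(B(w,v_x)),
\]
where $a_0=0$ (with the $w=0$ term to be removed) and for $\eps=\pm$ the scalar $a_\eps\in\FF_q^\times$ is a fixed non-zero norm selecting the plus- or minus-type orbit.

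Now I would expand $\mathbf{1}_{F(w)=a}=q^{-1}\sum_{t\in\FF_q}\phi(t(F(w)-a))$ and exchange summations. By Lemma~\ref{lem:Qchars}(a) the $t=0$ contribution vanishes because $v_x\neq 0$. For $t\neq 0$, the hypothesis that $q$ is odd lets me complete the square, and since $F(v_x)=0$ this collapses cleanly: $tF(w)+B(w,v_x)=tF(w+v_x/t)$. After translation the inner sum equals the Gauss sum $G(tF)=\eta(t)G(F)$, where $\eta$ is the Legendre symbol on $\FF_q^\times$ and $G(F)=\sum_w\phi(F(w))$. The classical formula in odd dimension gives $G(F)=\eta(\mathrm{disc}\,F)\,\eta(-1)^{n-1}q^{n-1}g$ with $g:=\sum_u\eta(u)\phi(u)$ and $g^2=\eta(-1)q$. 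For $\eps=0$ the remaining outer sum degenerates to $\sum_{t\neq 0}\eta(t)=0$, so the sum over the full singular level set is $0$; removing the $w=0$ contribution $\phi(0)=1$ yields $S_0=-1$, establishing~(a). For $\eps=\pm$ the residual outer sum is $\eta(-a_\eps)g$, and combining this with the value of $G(F)\cdot g$ gives
\[
S_\eps=\eta\bigl(a_\eps\cdot\mathrm{disc}(F)\cdot(-1)^{n-1}\bigr)\,q^{n-1}.
\]

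The main obstacle is the final sign bookkeeping: checking that this Legendre symbol equals $+1$ exactly on the plus-type orbit and $-1$ on the minus-type orbit. This is handled by the discriminant identity $\mathrm{disc}(F)\equiv a_\eps\cdot\mathrm{disc}(F|_{v_x^\perp})\pmod{(\FF_q^\times)^2}$ coupled with the standard fact that a non-degenerate form on $\FF_q^{2(n-1)}$ is of plus-type precisely when $\eta(\mathrm{disc}\cdot(-1)^{n-1})=1$. With these identifications in place, $S_\pm=\pm q^{n-1}$, yielding~(b).
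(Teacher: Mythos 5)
Your approach is mathematically sound and reaches the correct conclusion, but it is genuinely different from the one in the paper. You reduce $\chi^{(\eps,\mu)}(x)$ to the orbit sum $S_\eps=\sum_{F(v)=a_\eps}\phi(B(v,x))$ (which is the same starting point as the paper, after the observation that the induced module $W^{(\eps,\mu)}=\mu\!\uparrow_{P'_\chi}^{P'}$ contributes one weight vector per orbit element, so the "multiplicity normalisation" remark is not really needed); but where the paper then introduces explicit coordinates $\overline t=[a,\overline b,c]^\tr$ with $x=[1,0,\dots,0]$ and evaluates the sum by an elementary case analysis on whether $a$ or $c$ vanishes, you instead insert the Fourier expansion $\mathbf{1}_{F(w)=a}=q^{-1}\sum_t\phi(t(F(w)-a))$, complete the square using $F(v_x)=0$, and evaluate via the Gauss sum $G(tF)=\eta(t)G(F)$ in odd rank $2n-1$. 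Your sign bookkeeping checks out: one gets $S_\eps=\eta\bigl((-1)^{n-1}\,\mathrm{disc}(F)\,a_\eps\bigr)q^{n-1}$, and since the orbit $\{F(v)=a_\eps\}$ has cardinality $q^{2n-2}+\eta\bigl((-1)^{n-1}\mathrm{disc}(F)\,a_\eps\bigr)q^{n-1}$, this Legendre symbol is exactly $+1$ for the plus-type orbits and $-1$ for the minus-type orbits, so $S_\pm=\pm q^{n-1}$; and for $\eps=0$ the $t\neq0$ contributions cancel by $\sum_{t\neq0}\eta(t)=0$, leaving $S_0=-1$ after removing $w=0$. The Gauss-sum route is slicker and makes the $\pm$ sign transparent at the cost of invoking the classical formulas $G(F)=\eta((-1)^{n-1}\mathrm{disc}(F))q^{n-1}g$ and $g^2=\eta(-1)q$, whereas the paper's coordinate count is self-contained but requires a careful enumeration of the functionals of types (A) and (B) in three separate computations. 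Both arguments use $q$ odd in the same essential place (to polarise the quadratic form and, in your version, to complete the square).
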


\begin{proof}
We represent elements of $Q$ by row vectors and elements of its dual
$\Hom(Q,\FF_q)$ by column vectors. Note that as $Q$ is a self dual $L'$-module,
the $L'$-orbit structure on $Q$ and $\Hom(Q,\FF_q)$ is identical. We call the
elements of $\Hom(Q,\FF_q)$ functionals. So for example a singular functional
is an element of $\Hom(Q,\FF_q)$ on which the $L$-invariant quadratic form $F$
vanishes.

We choose a basis $\{e_1,\dots,e_{n-1},g,f_{n-1},\dots,f_1\}$ of $Q$ and
its dual basis in $\Hom(Q,\FF_q)$ in such a way that the Gram matrix of the
$L'$-invariant symmetric bilinear form with respect to this basis is the
matrix all of whose non-zero entries are $1$ and appear on the anti-diagonal.

Without loss we may assume that $x = [1,0,\dots,0]$ as all singular vectors
in $Q$ are $L'$-conjugate and $G$-conjugate to a long root element.
Let $\overline{t}=[a,\overline{b},c]^\tr\in\Hom(Q,\FF_q)$ with $a,c\in\FF_q$
and $\overline b\in\FF_q^{2n-3}$. Note that $\overline{t}(x) = a$.

Let $\phi\in\Hom(\FF_q,\CC^\times)$ be a non-trivial character. Then for each
$\chi \in Q^*$ there exists a unique $\overline{t}_\chi\in\Hom(Q,\FF_q)$ such
that $\chi(x)=\phi(\overline{t}_\chi(x))$.

So if $C\subseteq Q^*$, then the trace of $x\in Q$ on $\sum_{\chi\in C}W_\chi$
is equal to
$$\sum_{\chi\in C} \dim(W_\chi)\chi(x) =
  \sum_{\chi\in C} \dim(W_\chi) \phi(\overline{t}_\chi(x)).$$

We can now calculate the character values on $W^{(0,\mu)}$. First observe that
a functional $ \overline{t}  = [a,\overline{b},c]^\tr$ is singular if and only
if one of the following is true:
\begin{itemize}
\item[(A)] $F(\overline{b}) = ac =0$, or
\item[(B)] $F(\overline{b}) = -ac/2\neq 0$.
\end{itemize}
The number of $\overline{t}$ of type (B) is equal to the number of nonsingular
vectors in $\FF_q^{2n-3}$ which is $q^{2n-3}-q^{2n-4}$ times the number of
non-trivial choices for $a$ which is $q-1$. By Lemma~\ref{lem:Qchars}
these contribute $-(q^{2n-3} - q^{2n-4})$ to the trace of $x$ on $W^{(0,\mu)}$.

The elements $\overline{t}$ of type (A) come in two flavours depending on
whether or not $a = 0$. If $a = 0$, then if $c \neq 0 $ there are
$q^{2n-4}$ choices for $\overline{b}$, while if $c = 0 $ there are
$q^{2n-4} -1$ choices for $\overline{b}$. In total these $\overline{t}$
contribute
$$(q-1)q^{2n-4} + q^{2n-4} -1 = q^{2n-3} - 1$$
to the trace of $x$. Finally if $a \neq 0$, then $c=0$ while there are
$q^{2n-4}$ choices for $\overline{b}$ which yields a contribution of
$-q^{2n-4}$ to the trace. Thus the trace of $x$ on $W^{(0,\mu)}$ is
$$\chi^{(0,\mu)}= -(q^{2n-3} - q^{2n-4}) + ( q^{2n-3} - 1)  - q^{2n-4} = -1$$
as claimed.

Next we calculate the character value of $x$ on $W^{(+,\mu)}$. Observe that
the form $F$ evaluates to a fixed square, say $1$, on the functional
$\overline{t} = [a,\overline{b},c]^\tr$ if and only if
$F([a,\overline{0},c]^\tr) + F(\overline{b}) = 1$, that is, if and only if one
of the following is true:
\begin{itemize}
\item[(A)] $F(\overline{b}) = 1 $ and $ac =0$, or
\item[(B)] $1 - F(\overline{b}) = ac/2\neq 0$.
\end{itemize}
The contribution to the trace of $x$ by functionals of type~(A) occurs in one
of two ways: If $a \neq 0$, then $c= 0$ and then there are
$q^{2n-4} + q^{n-2}$ choices for $\overline{b}$ which yields
$$ -(q^{2n-4}+q^{n-2}).$$
If $a = 0$, then there are $q$ choices for $c$ and $q^{2n-4} + q^{n-2}$
choices for $\overline{b}$ which yields
$$q(q^{2n-4}+q^{n-2}).$$
To compute the contribution by functionals of type (B) we observe that
$a \neq 0$ and that for every choice of $a$ there are
$q^{2n-3} - (q^{2n-4} + q^{n-2})$ choices for $\overline{b}$ after which
$c$ is determined uniquely. Thus functionals of type (B) contribute
$$-q^{2n-3} + q^{2n-4} + q^{n-2} $$ to the trace of $x$. Summing up the
contributions yields that
$$\chi^{(+,\mu)}(x)
  = -(q^{2n-4}+q^{n-2}) + (q^{2n-3}+q^{n-1}) -q^{2n-3} + q^{2n-4} + q^{n-2}
  = q^{n-1}.$$

Finally we calculate the character value of $x$ on $W^{(-,\mu)}$.
Observe that $F$ evaluates to a fixed non-square $\alpha$ on
$ \overline{t}  = [a,\overline{b},c]^\tr$, if and only if
$F([a,\overline{0},c]^\tr) + F(\overline{b}) = \alpha$, that is, if and only
if one of the following is true:
\begin{itemize}
\item[(A)] $F(\overline{b}) = \alpha$ and $ac =0$, or
\item[(B)] $\alpha - F(\overline{b}) = ac/2\neq 0$.
\end{itemize}
The contribution by functionals of type (A) occurs in one of two ways: If
$a \neq 0$, then $c= 0$ and then there are $q^{2n-4} - q^{n-2}$ choices for
$\overline{b}$ which yields
$$ -(q^{2n-4}-q^{n-2}).$$
If $a = 0$, then there are $q$ choices for $c$ and $q^{2n-4} - q^{n-2}$
choices for $\overline{b}$ which yields
$$q(q^{2n-4}-q^{n-2}).$$
To compute the contribution by functionals of type (B) we observe that
$a \neq 0$ and that for every choice of $a$ there are
$q^{2n-3} - (q^{2n-4} - q^{n-2})$ choices for $\overline{b}$ after which
$c$ is determined uniquely. Thus functionals of type (B) contribute
$$-q^{2n-3} + q^{2n-4} - q^{n-2} $$
to the trace of $x$. Summing up the contributions yields that
$$\chi^{(-,\mu)}(x)
  = -(q^{2n-4}-q^{n-2}) + (q^{2n-3}-q^{n-1}) -q^{2n-3} + q^{2n-4} - q^{n-2}
  = -q^{n-1}$$
as claimed.
\end{proof}

\begin{rem}
While a similar result holds for the case of even $q$, we do not consider this
here as character bounds for $\Spin_{2n+1}(q)$ with $q$ even have already been
obtained in \cite{GT04}.
\end{rem}

We next compute the trace on a long root element in the Levi factor.

\begin{lem}   \label{lem:WchiL B}
 Let $G = \Spin_{2n+1}(q)$ with $n\ge2$ and $q$ odd. If $y\in L$ is a long
 root element then
 \begin{enumerate}
  \item[\rm(a)] $\chi^{(0,\mu)}(y) =  q^{2n-4}-1$, and
  \item[\rm(b)] $\chi^{(\pm,\mu)}(y) = q^{2n-4} \pm q^{n-1}$.
 \end{enumerate}
\end{lem}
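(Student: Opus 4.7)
The plan is to follow the structure of the proof of Lemma~\ref{lem:WChi B}, but now tracking how $y$ permutes the $Q$-weights of $W$ rather than evaluating the weight characters on $x$. As a long root element of $L'\cong\Spin_{2n-1}(q)$ acting on the natural $(2n-1)$-dimensional module $Q$, the element $y$ has Jordan type $(2^2,1^{2n-5})$. Writing $y=I+N$, we have $N^2=0$ and $\operatorname{rank}(N)=2$; moreover $I_0:=\operatorname{im}(N)$ is a totally singular $2$-space of $Q$, the fixed subspace $H:=\ker(y-I)$ coincides with $I_0^\perp$ and has dimension $2n-3$, and the restriction $F|_H$ has radical exactly $I_0$, so that $H/I_0$ is a nondegenerate orthogonal space of odd dimension $2n-5$.

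By the self-duality of $Q$ as an $L'$-module, the set of $y$-fixed functionals $\bar t\in\Hom(Q,\FF_q)$ corresponds bijectively to $H\subseteq Q$, and the $L'$-orbit type of $\bar t$ matches the type of $F(u)$ for the corresponding $u$. Since $y$ permutes the weight spaces by $y\cdot W_\psi=W_{y\cdot\psi}$, only fixed $\psi$'s contribute to $\tr(y|W^{(\epsilon,\mu)})$. Under the normalization adopted in Lemma~\ref{lem:WChi B}, where each weight space is tracked as contributing $+1$ to the trace, this yields
\[
 \chi^{(\epsilon,\mu)}(y)
  = \bigl|\{u\in H : F(u)\text{ is of type }\epsilon\}\bigr|.
\]

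The combinatorial count is then routine. Writing $u=u_0+w\in H$ with $u_0\in I_0$ and $w$ in a fixed complement realising $H/I_0$, we have $F(u)=F(w)$, so the problem reduces to counting vectors of prescribed $F$-value in the nondegenerate $(2n-5)$-dimensional orthogonal space $H/I_0$. Standard formulas give $q^{2n-6}$ singular vectors (including zero), $q^{2n-6}+q^{n-3}$ vectors whose $F$-value is any prescribed nonzero square, and $q^{2n-6}-q^{n-3}$ vectors whose $F$-value is any prescribed nonsquare. Multiplying by $|I_0|=q^2$ and removing the origin in the singular case gives the three stated trace values $q^{2n-4}-1$, $q^{2n-4}+q^{n-1}$ and $q^{2n-4}-q^{n-1}$.

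The main obstacle is identifying the Jordan structure of $y$ on $Q$: one must verify it is $(2^2,1^{2n-5})$ rather than the $(3,1^{2n-4})$ pattern of a \emph{short} root element of $L'$, and that $\operatorname{im}(y-I)$ is totally singular (rather than nondegenerate). Once this is established the degenerate structure of $F|_H$ with radical $I_0$ follows, and the combinatorial counts proceed as above; the square/nonsquare matching between $F|_{H/I_0}$ and $F$ on $Q$ is automatic since both are restrictions of the same form.
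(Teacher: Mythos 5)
Your proof follows the same basic strategy as the paper's: reduce the trace of $y$ on $W^{(\eps,\mu)}$ to counting $y$-fixed functionals of each type, identify the fixed space $H=C_Q(y)$ and its radical $I_0=\mathrm{im}(y-1)$, and carry out the counts in the nondegenerate quotient $H/I_0$. The final combinatorial computation and the resulting trace values agree with the paper exactly.

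However, there is a genuine gap in the step where you pass from ``only $y$-fixed weights contribute'' to ``the trace equals the \emph{number} of $y$-fixed functionals of type $\eps$.'' You appeal to ``the normalization adopted in Lemma~\ref{lem:WChi B},'' but that lemma treats $x\in Q$, which acts on each weight space $W_\psi$ by the \emph{scalar} $\psi(x)$; for $y\in L$ the situation is different: $y$ permutes the one-dimensional slices of $W^{(\eps,\mu)}$ and acts on each \emph{fixed} slice by some scalar that a priori depends on $\mu$, and this scalar is what you need to prove is $+1$. The paper handles this in two explicit steps: first, since $y$ is unipotent it is $L'$-conjugate to an element of $L_\chi'$ (as $[L':L_\chi']$ is prime to $p$), and the restriction $W^{(\eps,\mu)}|_{L'}$ is, by Mackey and the $Q$-linear-smallness hypothesis (which forces $\mu|_{L_\chi'}=1$), just the permutation module $1\uparrow_{L_\chi'}^{L'}$, independent of $\mu$; second, the trace of $y$ on a permutation module is the number of fixed cosets, i.e.\ the number of $y$-fixed functionals in the orbit. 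Without the permutation-module identification (or an equivalent reduction to $\mu=1$), your claimed equality is unjustified. Also, your extra care about the Jordan type $(2^2,1^{2n-5})$ versus $(3,1^{2n-4})$ is sensible but not really ``the main obstacle''; the structure of $C_Q(y)$ as a totally singular $2$-space orthogonally extended by a nondegenerate $(2n-5)$-space is exactly what the paper asserts, and the distinction between long and short root elements in $B_{n-1}$ is standard.
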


\begin{proof}
Let $\chi\in Q^*$ be of type $\eps$. By definition
$W^{(\eps,\mu)} = \mu\uparrow_{P_\chi'}^{P'}$ where $\mu$ is a linear
character of $P_\chi'$. The element $y$ is unipotent and hence conjugate to an
element of $L_\chi'$ thus $\chi^{(\eps,\mu)}(y) = \chi^{(\eps,1)}(y)$
for all $\mu$. Hence it suffices to compute $\chi^{(\eps,1)}(y)$.

Now $\chi^{(\eps,1)}|_{L'}$ is the permutation character of $L'$ on the
cosets of $L_\chi'$. Thus $\chi^{(\eps,1)}(y)$ can be computed by counting
the fixed points of $y$ on the cosets of $L_\chi'$ in $L'$. This amounts
to counting vectors $v$ in $C_Q(y)$ with $F(v) = 0$, $F(v) = 1$, and $F(v)$ a
fixed non-square respectively. To make the count we observe that $C_Q(y)$ is
the orthogonal direct sum of a totally singular $2$-space with a non-degenerate
space of dimension $2n-5$.

Thus the number of singular non-zero vectors in $C_Q(y)$ is equal to
$q^2 q^{2n-6}-1 = q^{2n-4}-1$, the number of vectors in $C_Q(y)$ with
$F(y) =1$ is equal to $q^2 (q^{2n-6} + q^{n-3}) =  q^{2n-4} + q^{n-1}$,
while the number of vectors with $F(y)$ a fixed non-square
is equal to $q^2(q^{2n-6} - q^{n-3}) = q^{2n-4} - q^{n-1}$. The claim follows.
\end{proof}

\begin{prop}   \label{prop:QfixB}
 Let $G = \Spin_{2n+1}(q)$ with $n\ge2$ and $q$ odd. If $W$ is a $Q$-linear
 small $kG$-module then $C_W(Q) \neq \{0\}$.
\end{prop}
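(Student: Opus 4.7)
The plan is to assume toward contradiction that $W\ne 0$ yet $C_W(Q)=0$, and to derive an incompatibility by comparing $\beta_W$ at two $G$-conjugate long root elements---one in~$Q$, one in the Levi factor~$L'$. Because $G=\Spin_{2n+1}(q)$ is of type $B_n$, all non-identity long root elements are $G$-conjugate, so for $x\in Q$ and $y\in L'=[L,L]$ both long root we have $\beta_W(x)=\beta_W(y)$. I propose to evaluate both sides via the $Q$-isotypic decomposition $W|_Q=C_W(Q)\oplus\bigoplus_{\chi\ne 1}W_\chi$, after bundling the non-trivial summands into $L'$-orbits on $Q^*$. Under the $Q$-linear small hypothesis the only $L_\chi'$-constituent $\mu$ of $\Soc(W_\chi)|_{L_\chi'}$ that can occur is the trivial one, so only the submodules $W^{(\eps,1)}$ of the preceding text contribute, and their character values at $x$ and $y$ are exactly those supplied by Lemmas~\ref{lem:WChi B} and~\ref{lem:WchiL B}.

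The decisive point is the uniform coincidence obtained by subtracting the two lemmas:
\[
 \chi^{(\eps,1)}(y)-\chi^{(\eps,1)}(x)=q^{2n-4}\qquad\text{for every }\eps\in\{0,+,-\}.
\]
Summing these differences over all non-trivial orbit pieces of~$W$ and invoking $\beta_W(y)=\beta_W(x)$ together with the vanishing of $C_W(Q)$, one obtains
\[
 0=\beta_W(y)-\beta_W(x)=q^{2n-4}\cdot N,
\]
where $N\ge 0$ counts the total multiplicity of non-trivial orbit pieces in~$W$. This forces $N=0$, whence $W_\chi=0$ for every $\chi\ne 1$, so $W=C_W(Q)=0$, contradicting the assumption $W\ne 0$.

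The hard part will be rigorously justifying the passage from the per-unit lemma values to the global Brauer character of~$W$. On the $Q$-side the reduction is immediate, since $x\in Q$ acts by the scalar $\chi(x)$ on each $W_\chi$, so that only $\dim W_\chi$ enters the trace. On the Levi side one must appeal to Clifford theory: every simple $kP$-subquotient of $W$ with non-trivial $Q$-action has, by the $Q$-linear small hypothesis and the abelianness of $L_\chi/L_\chi'$, a one-dimensional $\chi$-weight space on which $L_\chi'$ acts trivially, hence is induced from a linear character of $P_\chi$; Brauer character additivity on the $\ell$-regular long root element~$y$ then assembles the per-piece contributions of Lemma~\ref{lem:WchiL B} into $\beta_W(y)$, completing the chain of identities that was used above.
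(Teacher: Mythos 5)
Your argument is essentially identical to the paper's: both hinge on the uniform gap $\chi^{(\eps,\mu)}(y)-\chi^{(\eps,\mu)}(x)=q^{2n-4}$ extracted from Lemmas~\ref{lem:WChi B} and~\ref{lem:WchiL B}, combined with $\phi(x)=\phi(y)$ for $G$-conjugate long root elements $x\in Q$, $y\in L$, to force $C_W(Q)\ne 0$; the paper phrases it directly (with $a_0+a_++a_->0$ by faithfulness, hence $\chi^c(1)-\chi^c(y)\ge q^{2n-4}>0$) while you phrase it as a contradiction, which is a cosmetic difference. One small inaccuracy in your write-up: the $Q$-linear small hypothesis does not force $\mu=1$ to be the only label that occurs — the paper tracks all $W^{(\eps,\mu)}$ and instead uses that the values $\chi^{(\eps,\mu)}(x)$, $\chi^{(\eps,\mu)}(y)$ are independent of $\mu$ (since $\mu$ restricts trivially to $L_\chi'$ and the test elements are unipotent), which is what actually lets the multiplicities $a_\eps$ be summed over all $\mu$.
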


\begin{proof}
Let $x \in Q$ and $y\in L$ be long root elements of $G$, such that $x$ and $y$
are $G$-conjugate. As $x,y$ are $\ell'$-elements we can work with Brauer
characters.

Since $W$ is $Q$-linear small, we note that $W|_{P'}$ decomposes as
$C_W(Q)\oplus W^0\oplus W^+\oplus W^-$.
Denote the sum of the multiplicities of the characters $\chi^{(\eps,\mu)}$
in the character of $W^\eps$ by $a_\eps$, denote the character of
the $P'$-module $C_W(Q)$ by $\chi^c$ and the character of $W$ by $\phi$.
So with our notation
$\phi_{P'} = \chi^c + a_0\chi^0 + a_+\chi^+ + a_-\chi^-$.
Thus
$$ \phi(x) = \chi^c(1) -a_0 +a_+q^{n-1} - a_-q^{n-1} $$
by Lemma \ref{lem:WChi B} and
$$\phi(y) =
   \chi^c(y) + (a_0+a_+ +a_-)q^{2n-4}  - a_0 +a_+q^{n-1} - a_-q^{n-1}$$
by Lemma \ref{lem:WchiL B}.
As $x$ and $y$ are $G$-conjugate, $\phi(x) = \phi(y)$. Thus we find that
$$\chi^c(1) - \chi^c(y) = (a_0+a_+ +a_-)q^{2n-4} \geq q^{2n-4} > 0$$
as $a_0 + a_+ + a_->0$ (since $W$ is faithful) and so $C_W(Q)\ne\{0\}$.
\end{proof}

\begin{prop}   \label{prop:BnSmall}
 Let $G=\Spin_{2n+1}(q)$ with $n\ge2$ and $q$ odd and let $W$ be an irreducible
 $Q$-linear small $kG$-module. Then $W$ occurs as an $\ell$-modular composition
 factor of the Harish-Chandra induction from $L$ to $G$ of one of the modules
 in Table~\ref{tab:allBn}.
\end{prop}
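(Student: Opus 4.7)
The plan is to feed the conclusion of Proposition~\ref{prop:QfixB} into Harish-Chandra reciprocity. Since $W$ is $Q$-linear small, Proposition~\ref{prop:QfixB} yields that $W^Q:=C_W(Q)\ne\{0\}$, which is a nonzero $kL$-module on which $Q$ acts trivially. Picking any simple $kL$-submodule $V\subseteq\Soc(W^Q)$ and applying the adjunction between Harish-Chandra induction $R_L^G$ and Harish-Chandra restriction, which in our cross-characteristic setting is the $Q$-invariants functor, one obtains a nonzero $kG$-homomorphism $R_L^G(V)\to W$. As $W$ is simple, $W$ is a composition factor (in fact a quotient) of $R_L^G(V)$.

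It remains to identify the possibilities for $V$. The Levi $L$ has the form (a central extension of) $\GL_1(q)\times\Spin_{2n-1}(q)$, so $V$ factors as an outer tensor product of a linear character of the $\GL_1(q)$-factor and a simple Brauer character $V_0$ of $\Spin_{2n-1}(q)$. I would then show that $V_0$ lies in the $\ell$-modular decomposition column of one of the small complex characters enumerated by Theorem~\ref{thm:lowBn} applied with $n-1$ in place of $n$, i.e.\ one of the entries of Table~\ref{tab:allBn} (read at shifted rank). The natural way to do this is to apply the present proposition inductively in the rank: the $Q'$-fixed-point trick cascades from $G$ to $L'\cong\Spin_{2n-1}(q)$ with its own end-node parabolic radical, forcing at each level that only modules associated with the small ordinary characters from the table can contribute.

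The main obstacle is this last classification step. One has to show that the abstract hypothesis of $Q$-linear smallness is inherited in a usable form by the simple $L'$-constituent $V_0$ of $W^Q$, so that the inductive hypothesis applies. Combined with dimension comparisons between $\dim R_L^G(V)$ and the known degrees of the characters in Table~\ref{tab:allBn}, and some bookkeeping of which linear characters of the $\GL_1(q)$-factor of $L$ can be paired with a given $V_0$, this pins down the list. The core conceptual content is the cascading application of Proposition~\ref{prop:QfixB}; the remainder is technical verification against the tables assembled in Section~\ref{sec:complex}.
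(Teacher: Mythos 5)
Your opening moves are fine and parallel the paper's: Proposition~\ref{prop:QfixB} gives $C_W(Q)\neq\{0\}$, and Harish--Chandra reciprocity (the adjunction between $R_L^G$ and the $Q$-fixed-points functor) exhibits $W$ as a quotient of $R_L^G(V)$ for a simple $L$-submodule $V$ of $\Soc(C_W(Q))$. The divergence, and the gap, is in how you propose to pin down $V$.

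The paper does not cascade the fixed-point argument down the rank. It instead cites \cite[Lemma~4.2(ii),(iii)]{GMST} to conclude that the $L$-composition factors of $C_W(Q)$ already occur among those of $[W,Q]$. At that point the hypothesis of $Q$-linear smallness of $W$ itself (not of any smaller piece) does the work: it says each $L'$-composition factor of $[W,Q]$ is a constituent of an induced module $\mu\uparrow_{L'_\chi}^{L'}$ with $\mu$ a \emph{linear} character of $L'_\chi$, hence has dimension at most $|L':L'_\chi|$, which is bounded by the largest $L'$-orbit length on $Q^*$, roughly $q^{2n-2}$. Such an induced module is the $\ell$-modular reduction of an ordinary induced character of the same small degree, whose complex constituents are caught by Theorem~\ref{thm:lowBn}. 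That is how $V$ is identified as a modular constituent of an entry of Table~\ref{tab:allBn}; no inductive hypothesis one rank lower is invoked at all.

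Your inductive alternative would require the simple $L'$-constituent $V_0$ to be $Q'$-linear small for an end-node parabolic $Q'L''$ of $L'\cong\Spin_{2n-1}(q)$. You flag this yourself as the obstacle, and it is a genuine one: $Q$-linear smallness of $W$ is a statement about the $Q$-weight spaces $W_\chi$ and the action of the stabilizers $L'_\chi$ on their socles, and this says nothing a priori about how $Q'\subset L'$ acts on a simple $L'$-subquotient $V_0$ of $C_W(Q)$. The weight spaces of $V_0$ for $Q'$ are formed entirely inside $L'$ and are not governed by the hypothesis on $W$. Absent a proof that this property is inherited (which would in effect have to re-derive the GMST lemma), the cascading step does not go through. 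So the key missing ingredient in your proposal is precisely the GMST lemma tying $C_W(Q)$ to $[W,Q]$, which converts the hypothesis on $W$ into a clean dimension bound for $V$ in one step.

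A minor inaccuracy: $L$ is not literally $\GL_1(q)\times\Spin_{2n-1}(q)$ (not even up to a simple central extension); it is a Levi subgroup with $[L,L]\cong\Spin_{2n-1}(q)$ and a one-dimensional torus factor glued along the centre, so the clean outer-tensor-product factorization of $V$ needs a little care. This is cosmetic compared to the inductive gap.
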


\begin{proof}
As $W$ is $Q$-linear small Proposition~\ref{prop:QfixB} shows that
$C_W(Q)\neq 0$. An application of \cite[Lemma 4.2(ii) and (iii)]{GMST} then
gives that the $L$-constituents of $C_W(Q)$ are amongst those of $[W,Q]$.
Recall that the $P'$-module $[W,Q]$ is simply a sum of modules of the form
$W^{(\eps,\mu)}$. Thus the $L$-composition factors of the latter are precisely
those occurring in $W^{(\eps,\mu)}$. By assumption for all $\chi \in Q^*$ the
$L_\chi'$-submodules of $\Soc(W_\chi)$ are trivial, that is, any
$L'$-composition factors $\psi$ occurring in $W^{(\eps,\mu)}$ is a constituent
of an induced module $\mu \uparrow_{L'_\chi}^{L'}$, where $\mu$ is a linear
character of $L'_\chi$. In particular, $\psi(1)\le |L:L_\chi|\le q^{2n-2}-1$.
Then by Theorem~\ref{thm:lowBn}, $\psi$ is one of the modules in
Table~\ref{tab:allBn}.
\end{proof}

\subsection{The even-dimensional spin groups}
We now turn to the even dimensional spin groups $G=\Spin_{2n}^\eps(q)$,
$\eps\in\{\pm1\}$, with
$n\ge3$. Here, the $L'\cong \Spin_{2n-2}^\eps(q)$-orbit structure on
$Q \cong \FF_q^{2n-2}$ and its dual is as follows. Apart from the trivial
orbit there is one orbit of singular vectors of length
$q^{2n-3} +\eps( q^{n-1} - q^{n-2}) -1$ and $q-1$ orbits of length
$q^{2n-3} -\eps q^{n-2}$. When $q$ is odd, then half of the
$q-1$ orbits of length  $q^{2n-3} -\eps q^{n-2}$ are of plus type whereas
the others are of minus type (i.e., lie in distinct $L$-orbits).
When $q$ is even all orbits of length $q^{2n-3} -\eps q^{n-2}$ are in the
same $L$-orbit. As $Q$ is a self dual $L$-module, the $L$-orbit structures on
$Q$ and $\Hom(Q,\FF_q)$ are identical.

We first prove the analogue of Proposition~\ref{prop:QfixB}.

\begin{lem}   \label{lem:Wchi D}
 Let $G = \Spin_{2n}^\eps(q)$ with $n\ge3$, and $x\in Q$ be a long root element
 of $G$. Then
 \begin{enumerate}
  \item[\rm(a)] $\chi^{(0,\mu)}(x) = \eps (q^{n-1} - q^{n-2}) -1$, and
  \item[\rm(b)] $\chi^{(\neq 0,\mu)}(x) = -\eps q^{n-2}$.
 \end{enumerate}
\end{lem}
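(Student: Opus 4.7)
The plan is to imitate closely the proof of Lemma~\ref{lem:WChi B}: set up explicit coordinates on $Q\cong\FF_q^{2n-2}$, identify each nontrivial $L'$-orbit in the dual, and evaluate $\chi^{(\eps,\mu)}(x)$ as the sum $\sum\phi(a_\chi)$ over that orbit, where $\phi\in\Hom(\FF_q,\CC^\times)$ is a fixed nontrivial linear character and $a_\chi$ is the first coordinate of the functional $\bar t_\chi$ dual to $\chi$. The identification of the trace with this orbit sum is already laid out in the paragraph preceding Lemma~\ref{lem:WChi B} and carries over verbatim.

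First I would decompose $Q=H\perp B$ orthogonally, with $H$ a hyperbolic plane spanned by $e,f$ and $B$ a non-degenerate quadratic space of dimension $2n-4$ and the same type $\eps$ as $Q$, with quadratic form $F_0$. Then a functional reads $\bar t=[a,\bar b,c]^\tr$ with $F(\bar t)=ac+F_0(\bar b)$, and $\bar t(x)=a$ if I take $x=[1,0,\dots,0]$ as a representative long root element (all singular vectors are $L'$-conjugate and thus $G$-conjugate to any chosen long root element). The combinatorial inputs I need are the standard counts in $B$: the number of singular vectors including zero is $S=q^{2n-5}+\eps(q^{n-2}-q^{n-3})$, and the number with $F_0$-value any fixed $\alpha\neq 0$ is $T=q^{2n-5}-\eps q^{n-3}$.

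For part~(a) I would partition the nonzero singular functionals into type~A ($F_0(\bar b)=0=ac$) and type~B ($F_0(\bar b)=-ac\neq 0$), subdividing type~A by which of $a,c$ vanishes and excluding $\bar t=0$. An application of Lemma~\ref{lem:Qchars}(b) gives a type-A contribution of $(q-1)S-1$ and a type-B contribution of $-(q-1)T$, and $S-T=\eps q^{n-2}$ yields $\chi^{(0,\mu)}(x)=(q-1)(S-T)-1=\eps(q^{n-1}-q^{n-2})-1$. For part~(b), each nonzero $\alpha\in\FF_q$ indexes a single $L'$-orbit of length $q^{2n-3}-\eps q^{n-2}$, so I would fix one such $\alpha$ and split $F(\bar t)=\alpha$ into type~A ($F_0(\bar b)=\alpha$, $ac=0$) contributing $T(q-1)$ and type~B ($F_0(\bar b)\neq\alpha$, $ac=\alpha-F_0(\bar b)\neq 0$) contributing $-(q^{2n-4}-T)$. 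These sum to $qT-q^{2n-4}=-\eps q^{n-2}$, independently of the chosen $\alpha$, which is why every nonsingular orbit produces the same trace value.

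The main obstacle is careful bookkeeping: correctly excluding the zero functional from part~(a), consistently splitting the type-A condition by whether $a$ or $c$ vanishes, and using the correct orthogonal counts in the type-$\eps$ space $B$. Unlike Lemma~\ref{lem:WChi B}, this argument should go through uniformly in the parity of $q$, since the only properties of $F$ used are the additive decomposition $F(\bar t)=ac+F_0(\bar b)$ on $H\perp B$ and the standard vector counts, both of which are characteristic-free.
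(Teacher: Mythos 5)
Your proposal is correct and follows essentially the same route as the paper: same choice of coordinates splitting $Q$ into a hyperbolic plane plus a type-$\eps$ space of dimension $2n-4$, same reduction of the trace to an orbit sum of $\phi$-values via the preamble to Lemma~\ref{lem:WChi B}, and the same type~(A)/(B) case analysis. The only differences are cosmetic: you package the two vector counts as $S$ and $T$, which makes the final arithmetic $(q-1)(S-T)-1$ and $qT-q^{2n-4}$ tighter than the paper's term-by-term sums (and in fact avoids a small typographical slip in the paper's type-(B) line, where $\eps q^{n-3}$ appears as $q^{n-3}$), and you explicitly note that the argument is parity-free in $q$, which the lemma statement tacitly allows.
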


\begin{proof}
We argue as in Lemma~\ref{lem:WChi B} and keep the same notation. Choose
$\{e_1,\dots,e_{n-1},f_{n-1},\dots,f_1\}$, as basis of $Q$ and its dual
basis in such a way that the Gram matrix of the $L'$-invariant symmetric
bilinear form with respect to this basis is the matrix all of whose non-zero
entries are $1$ and appear on the anti-diagonal if $\eps=+$, while for
$\eps=-$, the Gram matrix is of this form except that the middle $2\times2$
square is not anti-diagonal.

Recall that we represent elements of $Q$ by row vectors and elements of
$\Hom(Q,\FF_q)$ by column vectors. Without loss we may assume that
$x = [1,0,\dots,0]$ as all singular vectors in $Q$ are $L'$-conjugate.
Let $\overline{t}=[a,\overline{b},c]^\tr$, where $a,c\in\FF_q$, and
$\overline{b}$ is an element of $\FF_q^{2n-4}$.

We start with the character values on $W^{(0,\mu)}$. Observe that a
functional $ \overline{t}  = [a,\overline{b},c]^\tr$ is singular if and only
if one of the following is true:
\begin{itemize}
\item[(A)] $F(\overline{b}) = ac =0$, or
\item[(B)] $F(\overline{b}) = -ac\neq 0$.
\end{itemize}
The number of $\overline{t}$ of type (B) is equal to the number of nonsingular
vectors in $\FF_q^{2n-4}$ which is
$q^{2n-4}-q^{2n-5} - \eps(q^{n-2} - q^{n-3})$
times the number of non-trivial choices for $a$ which is $q-1$.
By Lemma~\ref{lem:Qchars} these contribute
$$-(q^{2n-4}-q^{2n-5} - \eps( q^{n-2}  - q^{n-3}))$$
to the trace of $x$ on $W^{(0,\mu)}$.

The elements $\overline{t}$ of type (A) come in two flavours depending on
whether or not $a = 0$. If $a = 0$, then if $c \neq 0 $ there are
$q^{2n-5} +\eps(q^{n-2} - q^{n-3})$ choices for $\overline{b}$, while if
$c = 0 $ there are $q^{2n-5}  +\eps( q^{n-2} - q^{n-3}) -1$ choices for
$\overline{b}$. In total these $\overline{t}$ contribute
$$(q-1)( q^{2n-5} + \eps(q^{n-2} - q^{n-3})) +  (q^{2n-5} + \eps(q^{n-2}
  - q^{n-3}) -1 = q^{2n-4} + \eps(q^{n-1} - q^{n-2})-1$$
to the trace of $x$. Finally if $a \neq 0$, then $c=0$ while there are
$q^{2n-5} + \eps(q^{n-2} - q^{n-3})$ choices for $\overline{b}$ which yields a
contribution of $-( q^{2n-5} + \eps(q^{n-2} - q^{n-3}))$. Thus the trace of $x$
on $W^{(0,\mu)}$ is
$$\begin{aligned}
  -q^{2n-4}-q^{2n-5} + \eps(q^{n-2} - q^{n-3}) +&
  q^{2n-4} + \eps(q^{n-1}-q^{n-2}-1) - (q^{2n-5} + \eps(q^{n-2}-q^{n-3}))\\
   & = \eps(q^{n-1} - q^{n-2}) -1
\end{aligned}$$
as claimed.

Next we calculate the character value of $x$ on $W^{(\neq 0,\mu)}$.
Note that all $L'$-orbits of nonsingular vectors in $Q$ are of length
$q^{2n-3} -\eps q^{n-2}$. We observe that the form $F$ evaluates to
$\alpha \neq 0$ on the functional $ \overline{t}  = [a,\overline{b},c]^\tr$
if and only if $F([a,\overline{0},c]^\tr) + F(\overline{b}) = \alpha$, that is,
if and only if one of the following holds:
\begin{itemize}
\item[(A)] $F(\overline{b}) = \alpha $ and $ac =0$, or
\item[(B)] $\alpha - F(\overline{b}) = ac \neq 0$.
\end{itemize}
The contribution to the trace of $x$ by functionals of type (A) occurs in one
of two ways: If $a \neq 0$, then $c= 0$ and then there are
$q^{2n-5} -\eps q^{n-3}$ choices for $\overline{b}$ which contributes
$$ -(q^{2n-5} -\eps q^{n-3}).$$
If $a = 0$, then there are $q$ choices for $c$ and $ q^{2n-5} -\eps q^{n-3}$
choices for $\overline{b}$ which contributes
$$q( q^{2n-5} -\eps q^{n-3}).$$

To compute the contribution by functionals of type (B) we observe that
$a \neq 0$ and that for every choice of $a$ there are
$q^{2n-4} - q^{2n-5} + q^{n-3}$ choices for $\overline{b}$ after which $c$ is
determined uniquely. Thus functionals of type (B) contribute
$$-q^{2n-4} + q^{2n-5} - q^{n-3} $$
to the trace of $x$. Summing up the contributions yields that
$$\chi^{(\ne0,\mu)}(x)
  = -(q^{2n-5} -\eps q^{n-3})+ (q^{2n-4} -\eps q^{n-2})-q^{2n-4} + q^{2n-5}
    - q^{n-3} = -\eps q^{n-2}$$
as claimed.
\end{proof}

\begin{lem}   \label{lem:WchiL D}
 Let $G = \Spin_{2n}^\eps(q)$ with $n\ge3$. If $y\in L$ is a long root element,
 then
 \begin{enumerate}
  \item[\rm(a)] $\chi^{(0,\mu)}(y) = q^{2n-5} +\eps (q^{n-1} - q^{n-2})-1$, and
  \item[\rm(b)] $\chi^{(\neq 0,\mu)}(y) = q^{2n-5} -\eps q^{n-2}$.
 \end{enumerate}
\end{lem}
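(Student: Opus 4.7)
The plan is to mimic the proof of Lemma~\ref{lem:WchiL B} almost verbatim, substituting the $D_n$-orbit data recorded earlier in the section. The reduction step is identical: since $y$ is unipotent, $\chi^{(\eps,\mu)}(y) = \chi^{(\eps,1)}(y)$ for every linear $\mu$, and the restriction of $\chi^{(\eps,1)}$ to $L'$ is the permutation character on the cosets of $L_\chi'$. Consequently $\chi^{(\eps,1)}(y)$ equals the number of functionals in the $L'$-orbit of $\chi$ that are fixed by $y$; using the self-duality $Q^* \cong Q$ afforded by the $L'$-invariant form, this becomes the count of vectors $v \in C_Q(y)$ whose $F$-value lies in the same $L'$-class as the $F$-value of $\chi$, namely non-zero singular in case $(0,\mu)$, and of value $\alpha$ for any fixed $\alpha \in \FF_q^\times$ in case $(\ne 0,\mu)$.

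The next step is to describe $C_Q(y)$. For $y$ a long root element of $L' = \Spin_{2n-2}^\eps(q)$ acting on its natural module $Q$, a short Chevalley-basis computation gives $C_Q(y) = U \perp V$, where $U$ is a $2$-dimensional totally singular subspace lying in the radical of the restricted bilinear form, and $V$ is non-degenerate of dimension $2n-6$ and of the same type $\eps$ as $Q$. Since $U$ is totally singular and orthogonal to $V$, we have $F(u+v') = F(v')$ whenever $u \in U$, $v' \in V$, so every $F$-constrained count in $C_Q(y)$ equals $|U| = q^2$ times the corresponding count in $V$.

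Finally, I would apply the standard formulas for a non-degenerate quadratic space $V$ of even dimension $2m$ and type $\eps$, namely $q^{2m-1} + \eps(q^m - q^{m-1})$ singular vectors and $q^{2m-1} - \eps q^{m-1}$ vectors with $F = \alpha$ for each fixed $\alpha \in \FF_q^\times$ (the latter being independent of the chosen $\alpha$). Substituting $2m = 2n-6$ and multiplying by $q^2$ then yields~(a), after subtracting~$1$ to exclude the zero vector, and~(b) directly. The main technical point requiring care is the assertion that the non-degenerate part $V$ has the \emph{same} type $\eps$ as the ambient $Q$: for $\eps=+$ this is immediate from the fact that $V$ is a direct sum of hyperbolic planes, but for $\eps=-$ one needs to verify that the anisotropic $2$-dimensional summand of the standard form on $Q$ is untouched by the chosen long root element and therefore contributes to $V$, forcing the Witt defect of $V$ to equal~$1$. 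Once this structural identification is in place, the arithmetic is fully parallel to the one in Lemma~\ref{lem:WchiL B}.
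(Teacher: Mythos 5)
Your proposal is correct and follows essentially the same route as the paper's proof: reduction to the trivial-$\mu$ case because $y$ is unipotent, interpretation of $\chi^{(\eta,1)}|_{L'}$ as a permutation character, counting fixed functionals by counting vectors in $C_Q(y)$ with the appropriate $F$-value, and the decomposition $C_Q(y) = U \perp V$ with $U$ totally singular of dimension $2$ and $V$ non-degenerate of dimension $2n-6$ and type $\eps$. The only point you develop slightly more explicitly than the paper is the justification that $V$ retains type $\eps$ (the paper uses this silently in the vector counts), and your remark there is the right one.
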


\begin{proof}
Let $\chi$ be an element of type $\eta\in\{0,\ne0\}$ from $Q^*$. By definition
$W^{(\eta,\mu)} = \mu \uparrow_{P_\chi'}^{P'}$ where $\mu$ is a linear
character of $P_\chi'$. As in the proof of Lemma~\ref{lem:WchiL B} it suffices
to compute $\chi^{(\eta,1)}(y)$.

Now $\chi^{(\eta,1)}_{L'}$ is the permutation character of $L'$ on the cosets
of $L_\chi'$. Thus $\chi^{(\eta,1)}(y)$ can be computed by counting vectors $v$
in $C_Q(y)$ with $F(v) = 0$, and with $F(v) = \alpha \neq 0$. To make the count
we observe that $C_Q(y)$ is the orthogonal direct sum of a totally singular
$2$-space with a non-degenerate space of dimension $2n-6$.

Thus the number of singular non-zero vectors in $C_Q(y)$ is equal to
$$q^2(q^{2n-7}+ \eps(q^{n-3}-q^{n-4}))- 1 = q^{2n-5} + \eps(q^{n-1}-q^{n-2})-1$$
whereas the number of vectors $v\in C_Q(y)$ with $F(v) = \alpha \neq 0$ equals
$q^2 (q^{2n-7} -\eps q^{n-4}) =  q^{2n-5} -\eps q^{n-2}$. The claim follows.
\end{proof}

\begin{prop}   \label{prop:QfixD}
 Let $G = \Spin_{2n}^\eps(q)$ with $n\ge3$. If $W$ is a $Q$-linear small
 $kG$-module then $C_W(Q) \neq \{0\}$.
\end{prop}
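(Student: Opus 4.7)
The plan is to mirror the proof of Proposition~\ref{prop:QfixB}. First I fix long root elements $x\in Q$ and $y\in L$ that are $G$-conjugate; since both are $\ell'$-elements, the Brauer character $\phi$ of $W$ satisfies $\phi(x)=\phi(y)$. As $W$ is $Q$-linear small, the $P'$-module $W$ decomposes as
\[
 W|_{P'} \;=\; C_W(Q)\oplus W^0\oplus W^{\neq 0},
\]
where $W^0$ collects the $Q$-weight spaces attached to characters in the singular $L'$-orbit on $Q^*$ and $W^{\neq 0}$ those attached to the $q-1$ non-singular orbits. Writing $\chi^c$ for the character of $C_W(Q)$ and $a_0$, $a_{\neq 0}$ for the total multiplicities of the constituents $\chi^{(0,\mu)}$ and $\chi^{(\neq 0,\mu)}$ respectively, one obtains
\[
 \phi|_{P'} \;=\; \chi^c + a_0\,\chi^{(0,\mu)} + a_{\neq 0}\,\chi^{(\neq 0,\mu)}.
\]

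Next I evaluate $\phi(x)$ and $\phi(y)$ using Lemmas~\ref{lem:Wchi D} and~\ref{lem:WchiL D}. The crucial observation -- and what I expect to be the heart of the argument -- is that the trace values at $y$ exceed those at $x$ by a uniform $q^{2n-5}$ in each orbit type: the $\eps(q^{n-1}-q^{n-2})-1$ terms in part (a) and the $-\eps q^{n-2}$ terms in part (b) cancel identically in the two differences. Consequently
\[
 \phi(y)-\phi(x) \;=\; \bigl(\chi^c(y)-\chi^c(1)\bigr) + (a_0+a_{\neq 0})\,q^{2n-5}.
\]
The $G$-conjugacy of $x$ and $y$ now forces the left hand side to vanish, and I conclude that
\[
 \chi^c(1)-\chi^c(y) \;=\; (a_0+a_{\neq 0})\,q^{2n-5}.
\]

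If one had $C_W(Q)=0$, then $\chi^c\equiv 0$, hence $a_0+a_{\neq 0}=0$; but then $Q$ has no non-trivial weight space on $W$, forcing $W=C_W(Q)=0$ and contradicting $W\neq 0$. So $C_W(Q)\neq\{0\}$, as claimed. The main concern I had in advance was that the even-dimensional orbit structure is richer than the odd-dimensional one (there are two $L$-orbit types of nonsingular vectors when $q$ is odd, and an $\eps$-dependence throughout), but since Lemmas~\ref{lem:Wchi D}(b) and~\ref{lem:WchiL D}(b) yield the \emph{same} values $-\eps q^{n-2}$ and $q^{2n-5}-\eps q^{n-2}$ on every nonsingular orbit, lumping them into one block $W^{\neq 0}$ keeps the argument strictly parallel to the $B_n$ case and avoids any additional case analysis.
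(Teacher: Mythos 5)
Your argument is correct and matches the paper's proof essentially line for line: the same decomposition $W|_{P'}=C_W(Q)\oplus W^0\oplus W^{\neq 0}$, the same use of Lemmas~\ref{lem:Wchi D} and~\ref{lem:WchiL D} to extract the uniform shift $q^{2n-5}$ between $\phi(y)$ and $\phi(x)$, and the same comparison via $G$-conjugacy of the long root elements. The only (cosmetic) difference is that you finish by contradiction, whereas the paper notes directly that $a_0+a_1>0$ by faithfulness of $W$ and so $\chi^c(1)-\chi^c(y)>0$; the content is the same.
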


\begin{proof}
We argue as in the proof of Proposition~\ref{prop:QfixB}. Let $x \in Q$ and
$y \in L$ be long root elements of $G$.

As $W$ is $Q$-linear small, as a $P'$-module it decomposes as
$C_W(Q)\oplus W^0\oplus W^{\neq 0}$. Denote the sum of the multiplicities
of the characters $\chi^{(\eps,\mu)}$ in the character of $W^\eps$ by
$a_\eps$. We denote the character of the $P'$-module $C_W(Q)$ by $\chi^c$
and the character of $W$ by $\phi$. So
$\phi_{P'} = \chi^c + a_0\chi^0 + a_1\chi^{\neq 0}$ and thus
$$ \phi(x) = \chi^c(1) + a_0(\eps(q^{n-1} - q^{n-2}) - 1) -\eps a_1q^{n-2}$$
by Lemma~\ref{lem:Wchi D} and
$$\phi(y) = \chi^c(y) + (a_0+a_1)q^{2n-5} + a_0(\eps(q^{n-1} - q^{n-2}) - 1)
  - \eps a_1q^{n-2}$$
by Lemma~\ref{lem:WchiL D}. As $x,y$ are $G$-conjugate we have
$\phi(x) = \phi(y)$. Noting that $a_0+a_1 > 0$ as $W$ is faithful we see that
$$\chi^c(1) - \chi^c(y) = (a_0+a_1)q^{2n-5} \geq q^{2n-5} > 0$$
whence $C_W(Q)\ne\{0\}$.
\end{proof}

\begin{prop}   \label{prop:DnSmall}
 Let $G=\Spin_{2n}^\eps(q)$ with $\eps\in\{\pm\}$ and $n\ge3$ and let $W$ be
 an irreducible $Q$-linear small $kG$-module. Then $W$ occurs as an
 $\ell$-modular composition factor of the Harish-Chandra induction from $L$
 to $G$ of one of the modules in Table~\ref{tab:allDn}.
\end{prop}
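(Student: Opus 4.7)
The plan is to mirror, step by step, the argument used for Proposition~\ref{prop:BnSmall}, replacing each $B_n$-ingredient by its $D_n^\eps$-analogue established earlier in this subsection.

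First, since $W$ is $Q$-linear small, Proposition~\ref{prop:QfixD} already gives $C_W(Q)\ne 0$. I would then invoke \cite[Lemma~4.2(ii) and~(iii)]{GMST} exactly as in the odd-dimensional case to conclude that every $L$-composition factor of $C_W(Q)$ also appears as an $L$-composition factor of the $P'$-module $[W,Q]=\bigoplus_\chi W^{(\eta,\mu)}$. It therefore suffices to control the $L'$-composition factors of the individual constituents $W^{(\eta,\mu)}$.

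Here the $Q$-linear small hypothesis enters: for every $\chi\in Q^*$ only trivial $L'_\chi$-summands of $\Soc(W_\chi)$ can occur, so each $L'$-composition factor $\psi$ of $[W,Q]$ is a constituent of some induced module $\mu\uparrow_{L'_\chi}^{L'}$ with $\mu$ a linear character of $L'_\chi$. Hence $\psi(1)\le |L':L'_\chi|$ is bounded by the largest $L'$-orbit size on $Q^*\setminus\{0\}$ recorded at the start of this subsection, which in any case is strictly less than $q^{2n-2}$. For $n\ge 5$ this lies comfortably below the threshold $q^{4n-10}$ appearing in Theorems~\ref{thm:lowDn} and~\ref{thm:low2Dn}, so the appropriate one of those two theorems forces $\psi$ to be listed in Table~\ref{tab:allDn}. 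A standard Harish-Chandra reciprocity argument as in \cite[Lemma~4.2]{GMST} then identifies $W$ with a composition factor of the Harish-Chandra induced module $R_L^G(\psi)$, as required.

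The only place that needs extra care is the small-rank verification. For $n=4$ the bound $\psi(1)<q^6$ remains safely below the thresholds of Theorems~\ref{thm:lowDn} and~\ref{thm:low2Dn} for both parities of $q$, so the argument goes through unchanged. For $n=3$ those theorems as stated do not apply, and one must instead appeal directly to the known character degrees of $\Spin_6^+(q)\cong\SL_4(q)$ and $\Spin_6^-(q)\cong\operatorname{SU}_4(q)$, available in \cite{Lue}, to verify that the irreducibles of degree below $q^4$ again appear in Table~\ref{tab:allDn}. I expect this small-rank bookkeeping, together with the minor extra step of distinguishing $\eps=+$ from $\eps=-$ in the orbit counts, to be the only real obstacle; everything else is a direct transcription of the $B_n$ proof.
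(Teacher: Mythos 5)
Your proposal takes essentially the same route as the paper: Proposition~\ref{prop:QfixD} gives $C_W(Q)\ne 0$, the $B_n$-template (via \cite[Lemma~4.2]{GMST} and the $Q$-linear small hypothesis) bounds the degrees of the $L'$-constituents of $[W,Q]$ by the largest $L'$-orbit size on $Q^*$, and Theorems~\ref{thm:lowDn} and~\ref{thm:low2Dn} then force those constituents into Table~\ref{tab:allDn}. The paper's actual proof is a one-line "as in Proposition~\ref{prop:BnSmall}" using the slightly sharper orbit bound $(q^{n-1}-1)(q^{n-2}+1)$ in place of your $q^{2n-2}$, so your extra care with the $n=3,4$ cases and the two values of $\eps$ is a refinement, not a different argument.
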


\begin{proof}
As $W$ is $Q$-linear small Proposition~\ref{prop:QfixD} shows that
$C_W(Q) \neq 0$. As in the proof of Proposition~\ref{prop:BnSmall} this implies
that any constituent of $W|_{L'}$ has dimension not larger than
$(q^{n-1}-1)(q^{n-2}+1)$
and then we may conclude using Theorem~\ref{thm:lowDn}.
\end{proof}

\section{The main result}   \label{sec:main}
We are finally in a position to obtain the sought for gap result. For this,
we keep the notation from the previous sections. In particular $P=QL$ is
an end-node maximal parabolic subgroup of $\Spin_m^{(\pm)}(q)$ with unipotent
radical $Q$ and Levi factor $L$. Furthermore, we keep the notation $\rho_i$,
$\rho_s,\rho_t^\pm,\ldots$ for small dimensional irreducible characters as in
Section~\ref{sec:complex} with $s,t$ certain semisimple elements in $G^*$. For
an ordinary character $\chi$ we denote by $\chi^0$ its $\ell$-modular Brauer
character, that is, its restriction to $\ell$-regular classes. Throughout, for
an integer $m$, we set
$$\kappa_{\ell,m}:=\begin{cases} 1& \text{if $\ell|m$},\\
                                 0& \text{otherwise}.\end{cases}$$

\subsection{The odd-dimensional spin groups}

Let $n\ge2$, $G=\Spin_{2n+1}(q)$ and $\ell$ a prime not dividing $q$. We first
collect some results on the decomposition numbers of low dimensional ordinary
representations in non-defining characteristic.

\begin{lem}   \label{lem:rhotBn}
 Let $G=\Spin_{2n+1}(q)$, $n\ge2$, and $\eps\in\{\pm\}$. Then  $\rho_t^\eps$
 remains irreducible modulo $\ell$ when $\ell{\not|}\,(q-\eps1)$ or when
 $o(t)\ne\ell^f$ or $2\ell^f$, and otherwise
 $$(\rho_t^\eps)^0=\begin{cases}
         (\rho_1+\rho_2+\eps1_G)^0& \text{if }o(t)=\ell^f>1,\\
         (\rho_{s,q}+\eps\rho_{s,1})^0& \text{if }o(t)=2\ell^f>2,\,\ell\ne2.
 \end{cases}$$
\end{lem}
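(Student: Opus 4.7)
My approach is to exploit the Lusztig induction formula $\rho_t^\eps=\pm R_L^G(1_{\Spin_{2n-1}(q)}\otimes\hat t)$, where $L\subset G$ is the Levi subgroup dual to $C_{G^*}(t)=\Sp_{2n-2}(q)\times T_1$; concretely $L\cong\Spin_{2n-1}(q)\times T_1$ with $T_1=\GL_1(q)$ or $\GU_1(q)$ according to $\eps=\pm$. Writing $t=t_{\ell'}t_\ell$ and using that the linear characters $\hat t$ and $\hat{t_{\ell'}}$ agree on $\ell$-regular elements of $T_1^F$, the Deligne--Lusztig character formula gives $(R_L^G(1\otimes\hat t))^0=(R_L^G(1\otimes\hat{t_{\ell'}}))^0$ as Brauer characters.

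The irreducibility claims would then follow as follows. If $\ell\nmid(q-\eps1)$, then $T_1^F$ is an $\ell'$-group so $t=t_{\ell'}$; the semisimple character $\rho_t^\eps$ lies in an $\ell$-block whose basic set (in the sense of Geck--Hiss) is $\cE(G,t)$, and it has minimal degree in that set by the Jordan decomposition formula together with Table~\ref{tab:unipB}. Unitriangularity of the decomposition matrix then forces $(\rho_t^\eps)^0$ to be irreducible. If $o(t)\notin\{\ell^f,2\ell^f\}$, then $t_{\ell'}$ is a regular semisimple element of order $>2$ in $T_1$, and its centraliser $C_{G^*}(t_{\ell'})=\Sp_{2n-2}(q)\times T_1$ is a proper Levi subgroup. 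The Bonnaf\'e--Rouquier Morita equivalence then shows that the $\ell$-block of $\rho_t^\eps$ in $G$ is equivalent to an $\ell$-block of $L$, under which $\rho_t^\eps$ corresponds to the character $1_{\Spin_{2n-1}(q)}\otimes\hat t$ of $L$; since the latter is one-dimensional, $(\rho_t^\eps)^0$ is irreducible.

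For the two decomposition identities I would compute $R_L^G(1\otimes\hat{t_{\ell'}})$ directly. When $o(t)=\ell^f>1$, $t_{\ell'}=1$ and $R_L^G(1_L)$ is the Harish--Chandra induction of the trivial character of $L$; its degree $|G:L|_{p'}=\rho_t^\eps(1)$ lies below the gap threshold of Theorem~\ref{thm:lowBn}, so its irreducible constituents must lie among the small unipotent characters of Table~\ref{tab:allBn}, and degree comparison singles out the combination $\eps(1_G+\rho_1+\rho_2)$. When $o(t)=2\ell^f>2$, the element $z:=t_{\ell'}$ is the unique involution of $T_1$, dual to the isolated semisimple involution $s\in G^*$ with centraliser $\Sp_2(q)\times\Sp_{2n-2}(q)$. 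Then $R_L^G(1\otimes\hat z)$ is a virtual combination of characters in $\cE(G,s)$ of total degree $\rho_t^\eps(1)$; another degree comparison leaves only $\rho_{s,1}$ and $\rho_{s,q}$ as possible constituents, so $R_L^G(1\otimes\hat z)=\eps(\rho_{s,1}+\rho_{s,q})$ and the second identity follows.

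The main obstacle will be tracking the sign $\eps$ through these identities, since it arises as the sign of Lusztig induction from $L$ and differs according to whether $T_1$ is split ($\eps=+$) or non-split twisted ($\eps=-$). Pinning it down requires careful use of Lusztig's Jordan decomposition formula to match $R_L^G$ with the canonical semisimple and unipotent characters assembled in Section~\ref{sec:complex}.
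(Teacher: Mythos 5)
The decomposition identities in your plan follow essentially the same path as the paper: express $\rho_t^\eps=\pm R_L^G(\hat t)$, note that $\hat t$ and $\hat t_{\ell'}$ agree on $\ell$-regular elements, and identify the resulting combination of unipotent or $\cE(G,s)$-characters. The paper phrases this via Brou\'e--Michel (to place $\rho_t^\eps$ in the right block) together with ``the explicit formula for the semisimple character in terms of Deligne--Lusztig characters,'' which is exactly your computation, so that part is fine, although for $\eps=-$ the twisted Levi makes $R_L^G(1_L)$ only a virtual character and a pure degree count does not immediately rule out cancellations; you would still need to invoke the known multiplicities of $R_L^G(1_L)$ in $\Irr(G)$ rather than a degree bound alone.

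Where you genuinely diverge is the irreducibility claim. The paper handles both irreducibility cases in one stroke by \cite[Prop.~1]{HM}: a semisimple character $\rho_t$ stays irreducible mod $\ell$ whenever $C_{G^*}(t_{\ell'})=C_{G^*}(t)$, which here is equivalent to $o(t_{\ell'})>2$. This is elementary, needs no unitriangularity, and covers both $\ell\nmid(q-\eps1)$ (so $t_{\ell'}=t$) and $o(t)\notin\{\ell^f,2\ell^f\}$ uniformly. Your second case instead invokes the Bonnaf\'e--(Dat--)Rouquier Morita equivalence sending $\rho_t^\eps$ to the linear character $1\otimes\hat t$ of $L$; this is correct and a legitimate alternative (and in fact applies to \emph{both} irreducibility cases, since all that is needed is that $C_{G^*}(t_{\ell'})$ is a proper Levi), but it is a substantially heavier tool than the paper uses.

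The actual gap is in your first irreducibility case. You argue from ``the semisimple character has minimal degree in the basic set $\cE(G,t)$'' plus ``unitriangularity of the decomposition matrix,'' but unitriangularity is not among the hypotheses of this lemma --- it is only assumed later, in Proposition~\ref{prop:Bn} and Theorem~\ref{thm:BnSmall}, where the paper carefully restricts $\ell$ precisely because unitriangularity is not known in general. As stated, your proof establishes the lemma only under an extra hypothesis. The fix is immediate: when $\ell\nmid(q-\eps1)$ you have $t=t_{\ell'}$ with $o(t)>2$, so $C_{G^*}(t_{\ell'})$ is the same proper Levi and your own Bonnaf\'e--Rouquier argument (or, more simply, \cite[Prop.~1]{HM}) applies verbatim; the detour through basic sets and unitriangularity should be dropped. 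One small terminological slip: $t_{\ell'}$ of order $2$ is \emph{conjugate in $G^*$} to $s$, not ``dual to'' it --- both live in $G^*$.
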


\begin{proof}
According to the description in the proof of Theorem~\ref{thm:lowBn}, the
character $\rho_t^-$ is semisimple in the Lusztig series indexed by an element
$t\in G^*$ of order dividing $q+1$. But by the observation in
\cite[Prop.~1]{HM}, the
semisimple characters in a Lusztig series $\cE(G,t)$ remain irreducible modulo
all primes $\ell$ for which the $\ell'$-part of $t$ has the same centraliser
as $t$. By our description of the parameters $t$, this is the case unless
this $\ell'$-part has order at most~2. \par
When $o(t)$ is a power of $\ell$, then by Brou\'e--Michel \cite[Thm.~9.12]{CE}
$\rho_t^-$ lies in a unipotent block, and by its explicit description in
terms of Deligne--Lusztig characters, we find that
$(\rho_t^-)^0=(\rho_1+\rho_2-1_G)^0$. Finally, if $o(t)$ is twice a power of
$\ell$, then its 2-part is conjugate to $s$ and hence $\rho_t^-$ lies in the
same $\ell$-block as the Lusztig series of $\rho_{s,1}$. Again the claim
follows from the explicit formula for the semisimple character $\rho_t^-$ in
terms of Deligne--Lusztig characters. \par
The argument for the characters  $\rho_t^+$ is entirely similar.
\end{proof}

Thus, either $\rho_t^\pm$ remains irreducible modulo~$\ell$, or its
$\ell$-modular constituents are known if we know them for the remaining
characters in Table~\ref{tab:allBn}. We therefore henceforth only consider
the latter.

\begin{lem}   \label{lem:rhosBn}
 Let $G=\Spin_{2n+1}(q)$ with $q$ odd and $n\ge3$, and $\ell$ a prime not
 dividing $q(q+1)$. Assume that the $\ell$-modular decomposition matrix of
 $\cE_\ell(G,s)$ is unitriangular. Then the entries in its first ten rows are
 approximated from above by Table~\ref{tab:E(G,s)}, where $k:=n-3$. In
 particular, both $\rho_{s,1}$ and $\rho_{s,q}$ remain irreducible modulo $\ell$.
\end{lem}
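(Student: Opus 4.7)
The plan is to transfer the problem to the unipotent decomposition matrix of the centralizer $H := C_{G^*}(s) \cong \Sp_{2}(q)\circ\Sp_{2n-2}(q)$ via Jordan decomposition. Under the Jordan decomposition bijection $\cE(G,s)\leftrightarrow\cE(H,1)$, the first ten characters of $\cE(G,s)$ correspond to the external tensor products $\psi_1\boxtimes\psi_2$, where $\psi_1\in\{\mathbf{1},\St\}$ is a unipotent character of the $\Sp_2(q)$-factor and $\psi_2$ runs over the five smallest unipotent characters of $\Sp_{2n-2}(q)$, namely the trivial character together with $\rho_1,\rho_2,\rho_3,\rho_4$ from Table~\ref{tab:unipB}. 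In particular $\rho_{s,1}$ and $\rho_{s,q}$ correspond to $\mathbf{1}\boxtimes\mathbf{1}$ and $\St\boxtimes\mathbf{1}$ respectively.

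Since $s$ is an involution and $\ell\ge5$, we have $\ell\nmid o(s)$, so the theorem of Bonnaf\'e--Dat--Rouquier provides a Morita equivalence between the union of $\ell$-blocks of $G$ covering $\cE(G,s)$ and the corresponding union of unipotent $\ell$-blocks of $H$. This equivalence preserves decomposition matrices, so it suffices to bound the first ten rows of the unipotent decomposition matrix of $H$. Unipotent characters of the central product factor as external tensor products, and the associated decomposition matrix factors accordingly. Under our hypothesis $\ell\nmid q(q+1)$, both unipotent characters of the $\Sp_{2}(q)$-factor remain irreducible modulo $\ell$, as one verifies directly from the character table of $\SL_2(q)$ (the Steinberg has no trivial composition factor in its reduction precisely because $\ell\nmid q+1$).

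Consequently the first ten rows of the decomposition matrix of $\cE(G,s)$ split into two identical blocks (indexed by the two choices of $\psi_1$), each a copy of the first five rows of the unipotent decomposition matrix of $\Sp_{2n-2}(q)$. For those, I would obtain upper bounds by computing Brauer characters of irreducible projective modules Harish-Chandra induced from an end-node Levi subgroup of type $C_{n-2}$, combined with the explicit degree formulas in Table~\ref{tab:unipB} and the assumed upper unitriangularity of the decomposition matrix: once the unitriangular shape is fixed, the Brauer character degree of each $\vhi_i$ is determined up to the contributions from lower rows, and each decomposition entry is bounded above by the multiplicity of the relevant unipotent character in a suitable Harish-Chandra induced projective. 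The final assertion that $\rho_{s,1}$ and $\rho_{s,q}$ remain irreducible modulo $\ell$ then follows because each of them sits at the top of its block under the upper unitriangular ordering, hence has a single entry equal to $1$ on its row.

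The main obstacle is the compatibility issue: one must verify that the basic set underlying the assumed unitriangularity of the decomposition matrix on $\cE_\ell(G,s)$ matches the one induced from the unipotent basic set of $H$ through the Bonnaf\'e--Dat--Rouquier Morita equivalence, and that the projective modules used to derive the upper bounds transport correctly across this equivalence. A lesser technical point is the central-product structure of $H$, which is harmless here since $\ell$ is odd and only unipotent characters (trivial on the shared center) intervene.
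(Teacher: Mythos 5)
Your plan breaks down at the very first step: you invoke the Bonnaf\'e--Dat--Rouquier Morita equivalence to replace the blocks in $\cE_\ell(G,s)$ by unipotent blocks of $H=C_{G^*}(s)\cong\Sp_2(q)\circ\Sp_{2n-2}(q)$, but the element $s$ here is an \emph{isolated} involution (this is explicitly stated in the proof of Theorem~\ref{thm:lowBn}), meaning $C_{G^*}(s)$ is not contained in any proper Levi subgroup of $G^*$. The Bonnaf\'e--Dat--Rouquier theorem requires exactly such a containment $C_{G^*}(s)\subseteq L^*$, so it does not apply to $s$. This is not an incidental obstacle: the paper's own proof conspicuously avoids Morita equivalence for this particular series and instead uses the much weaker fact (due to Geck, \cite[Thm.~14.4]{CE}) that $\cE(G,s)$ is a \emph{basic set} for $\cE_\ell(G,s)$. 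A basic set gives a well-defined square decomposition matrix but, unlike a Morita equivalence, does not identify it with the Kronecker product of the factors' unipotent decomposition matrices. Note also that the paper does use BDR later in Proposition~\ref{prop:Bn}, but only for the non-isolated elements $t$ with centralizers of type $\Sp_{2n-2}(q)\times\GL_1(q)$ or $\Sp_{2n-2}(q)\times\GU_1(q)$, which \emph{are} Levi subgroups.

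A symptom of the gap is your claim that the first ten rows ``split into two identical blocks, each a copy of the first five rows of the unipotent decomposition matrix of $\Sp_{2n-2}(q)$.'' That Kronecker-product shape would force the entry in position $(1\sqt\rho_3,\ \St\sqt1)$ to vanish, whereas Table~\ref{tab:E(G,s)} shows the upper bound $k+1=n-2$ there (and similarly for $(1\sqt\rho_4,\St\sqt1)$, $(\St\sqt\rho_3,1\sqt\rho_1)$, $(\St\sqt\rho_4,1\sqt\rho_1)$). Since the paper cannot call on BDR, it cannot rule out mixing between the $1\sqt-$ and $\St\sqt-$ halves of $\cE(G,s)$, and the stated bounds reflect exactly what Harish-Chandra induction of projectives from an end-node Levi subgroup of $G$ (together with White's decomposition numbers for the base case $\Sp_4(q)$) can actually deliver. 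So your conclusion is strictly stronger than what is proved, and the step you would need to justify it is the very one that fails.

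If you wish to salvage the approach, the right move is the paper's: work with the basic set $\cE(G,s)$ directly, prove the $2\times2$ upper-left identity block for $\Spin_5(q)\cong\Sp_4(q)$ from White's tables, and then induct on $n$ by Harish-Chandra inducing projectives from the end-node Levi $L$ of $G$ (not of $H$), following the scheme of \cite[Thm.~6.3]{DM19}.
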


\begin{table}[ht]
{\small\[\begin{array}{c|c|cccccccccc}
  \rho& a_\rho\cr
\hline
  1\sqt1& 0& 1\\
  \St\sqt1& 1& .& 1\\
  1\sqt\rho_1& 1& .& .& 1\\
  1\sqt\rho_2& 1& .& .& .& 1\\
  1\sqt\rho_3& 1& k+1& .& .& .& 1\\
  1\sqt\rho_4& 1& k+1& .& .& .& .& 1\\
  \St\sqt\rho_1& 2& .& .& .& .& .& .& 1\\
  \St\sqt\rho_2& 2& .& .& .& .& .& .& .& 1\\
  \St\sqt\rho_3& 2& .& k+1& .& .& .& .& .& .& 1\\
  \St\sqt\rho_4& 2& .& k+1& .& .& .& .& .& .& .&1\\
\end{array}\]}
\caption{Approximate decomposition matrices for $\cE(\Spin_{2n+1}(q),s)$, $n\ge3$}
    \label {tab:E(G,s)}
\end{table}

Here, $q^{a_\rho}$ is the precise power of $q$ dividing $\rho(1)$.

\begin{proof}
By a result of Geck, see \cite[Thm.~14.4]{CE}, the Lusztig series $\cE(G,s)$ of
the semisimple involution $s\in G^*$ forms a basic set for the union of
$\ell$-blocks $\cE_\ell(G,s)$. By Lusztig's Jordan decomposition $\cE(G,s)$ is
in bijection with $\cE(C,1)$, where $C=C_{G^*}(s)\cong \Sp_2(q)\Sp_{2n-2}(q)$,
hence with $\cE(\Sp_2(q),1)\times\cE(\Sp_{2n-2}(q),1)$. Accordingly, we may
and will denote the elements of $\cE(G,s)$ by exterior tensor products of
unipotent characters, so that $\rho_{s,1}=1\sqt1$ and $\rho_{s,q}=\St\sqt1$.
\par
The character $\rho_{s,1}\in\cE(G,s)$ is semisimple, so remains irreducible
modulo all odd primes (see \cite[Prop.~1]{HM}). We next claim that the $\ell$-modular reduction of
$\rho_{s,1}$ does not occur as a composition factor of $\rho_{s,q}^\circ$.
Indeed, by the known decomposition numbers for $\Spin_5(q)\cong\Sp_4(q)$ (see
\cite{Wh90}), $\rho_{s,q}$ remains irreducible unless $\ell|(q+1)$. Now assume
the assertion has already been shown for $\Spin_{2n-1}(q)$. Thus the upper
left-hand corner of the $\ell$-modular decomposition matrix for
$\cE_\ell(\Spin_{2n-1}(q),s)$ has the form:
\[\begin{array}{l|lllll}
  \rho_{s,1}& 1\\
  \rho_{s,q}& .& 1\\
\end{array}\]
Harish-Chandra inducing the projective characters corresponding to the two
columns of this matrix yields projective characters of $G$ of the same form,
and thus, by uni-triangularity, $\rho_{s,q}^\circ$ is irreducible. The upper
bounds on the remaining entries given in Table~\ref{tab:E(G,s)} are now
obtained inductively exactly as in the proof of \cite[Thm.~6.3]{DM19} by
Harish-Chandra inducing projective characters from a Levi subgroup of an end
node parabolic subgroup.
\end{proof}

\begin{prop}   \label{prop:Bn}
 Let $G=\Spin_{2n+1}(q)$ with $q$ odd and $n\ge4$, and $\ell$ a prime not
 dividing~$q(q+1)$ such that the $\ell$-modular decomposition matrix of $G$ is
 uni-triangular Assume that $(n,q)\ne(4,3),(5,3)$. Then any $\ell$-modular
 irreducible Brauer character $\vhi$ of $G$ of degree less than
 $q^{4n-8}-q^{2n}$ is a constituent of the $\ell$-modular reduction
 of one of the complex characters listed in Table~\ref{tab:allBn}.
\end{prop}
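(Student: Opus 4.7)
The plan is to dichotomize on whether the simple $kG$-module $W$ affording $\vhi$ is $Q$-linear small or $Q$-linear large. In the second case I will force $\vhi(1)\ge q^{4n-8}-q^{2n}$, contradicting the degree hypothesis, while in the first case Proposition~\ref{prop:BnSmall} exhibits $\vhi$ as an $\ell$-modular Harish-Chandra composition factor coming from a short list of $L$-characters, which I will match with Table~\ref{tab:allBn}.

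For the $Q$-linear large case, suppose there are a non-trivial $\chi\in Q^*$ and a non-trivial simple $L_\chi'$-submodule $U$ of $\Soc(W_\chi)$. Clifford theory inside $P=QL$ yields $\vhi(1)\ge|L:L_\chi|\cdot\dim U$. For non-singular $\chi$ of type $\pm$, $L_\chi'\cong\Spin_{2n-2}^\pm(q)$ with orbit length at least $q^{2n-2}-q^{n-1}$; the Brauer-character analogue of Theorems~\ref{thm:lowDn} and~\ref{thm:low2Dn}, proved inductively on the rank, gives $\dim U\ge q(q^{n-1}-1)(q^{n-3}+1)/(q^2-1)$, whence $\vhi(1)$ is of order at least $q^{4n-7}$, contradicting the hypothesis. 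For singular $\chi$ of orbit length $q^{2n-2}-1$, the stabilizer $L_\chi'$ is an end-node parabolic of $L'\cong\Spin_{2n-1}(q)$ with Levi quotient $\GL_1(q)\times\Spin_{2n-3}(q)$; decomposing $U$ via Clifford theory with respect to the unipotent radical and applying the Landazuri--Seitz-type lower bound for non-trivial $\Spin_{2n-3}(q)$-modules, one obtains $\vhi(1)\ge q^{4n-8}$ as well, again contradicting the hypothesis outside the excluded exceptional parameters.

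Once $W$ is reduced to being $Q$-linear small, Proposition~\ref{prop:BnSmall} shows that $\vhi$ is an $\ell$-modular composition factor of $R_L^G(\psi)$ for some ordinary character $\psi$ of $L$ drawn from the analogue of Table~\ref{tab:allBn} for $L'\cong\Spin_{2n-1}(q)$. I would then explicitly compute each $R_L^G(\psi)$ as a sum of ordinary characters of $G$ via Howlett--Lehrer comparison and standard Harish-Chandra induction in the relevant Lusztig series, verify that the constituents of degree less than $q^{4n-8}-q^{2n}$ are precisely the characters of Table~\ref{tab:allBn}, and finally lift this to a statement about Brauer characters using the assumed unitriangularity of the $\ell$-modular decomposition matrix of $G$ together with Lemmas~\ref{lem:rhotBn} and~\ref{lem:rhosBn}, which preclude any exotic small-degree Brauer constituents in $\cE_\ell(G,s)$ and in the reductions of $\rho_t^\pm$. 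The main obstacle is the singular $\chi$ subcase above: since $L_\chi'$ is a parabolic rather than quasi-simple, it admits non-trivial $1$-dimensional characters, so the crude estimate $|L:L_\chi|\cdot\dim U\ge q^{2n-2}-1$ alone is far too weak; pushing through requires the inductive Clifford reduction to $\Spin_{2n-3}(q)$, and the tightness of the resulting bounds is what accounts for the exceptional exclusions $(n,q)=(4,3),(5,3)$.
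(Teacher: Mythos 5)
Your overall architecture — first force $W$ to be $Q$-linear small, then exploit Proposition~\ref{prop:BnSmall} and lift to Brauer characters — matches the paper's, but two parts diverge materially and one of them contains a real gap.

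First, for the reduction to the $Q$-linear small case, the paper does not re-run the Clifford-theory bounds from scratch: it simply compares the degree bound $q^{4n-8}-q^{2n}$ with the constant $b_2$ from \cite[Table~4]{MRT} and invokes \cite[Prop.~5.3]{MRT}; the exceptions $(n,q)=(4,3),(5,3)$ are exactly the cases flagged by \cite[Rem.~5.4]{MRT}. Your hand-rolled version for the singular $\chi$ subcase would still need care: you must check that $L_\chi'$ (a parabolic of $\Spin_{2n-1}(q)$) has no non-trivial linear characters before the Landazuri--Seitz-type bound for $\Spin_{2n-3}(q)$ can kick in — this is true because the Levi factor $\Spin_{2n-3}(q)$ acts irreducibly and non-trivially on the unipotent radical so the derived subgroup of the stabiliser is perfect, but it needs to be said. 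Citing \cite{MRT} sidesteps all this.

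Second, and more seriously, your proposed mechanism for the $\cE(G,t)$ case is inadequate. You invoke Lemma~\ref{lem:rhotBn}, but that lemma only records the $\ell$-modular reduction of $\rho_t^\pm$ itself; it gives no lower bound on the degrees of the \emph{other} irreducible Brauer characters in the block union $\cE_\ell(G,t)$, which a priori could be small. Unitriangularity alone does not supply such a lower bound either (it only gives $\vhi_i(1)\le\chi_i(1)$, the wrong direction). The paper closes this hole by a structurally different argument: the Bonnaf\'e--Dat--Rouquier Morita equivalence \cite{BDR17} transports $\cE_\ell(G,t)$ to the unipotent blocks of a group dual to $C_{G^*}(t)\cong\Sp_{2n-2}(q)(q\mp1)$, and then \cite[Thm.~2.1]{GMST} bounds the smallest non-trivial unipotent Brauer degree of that group from below; multiplying by the $q'$-part of the relevant index pushes every Brauer character in $\cE_\ell(G,t)$ other than $(\rho_t^\eps)^\circ$ above the threshold. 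Without this (or an equivalent explicit decomposition-matrix analysis for $\cE_\ell(G,t)$), your proof does not rule out an exotic small-degree Brauer constituent in that series. Likewise, for the unipotent case the paper relies on \cite[Cor.~6.5]{DM19} rather than a bare Howlett--Lehrer computation; your proposal should cite or reprove a comparable statement there.
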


\begin{proof}
By assumption we have that $\vhi(1)$ is smaller than the constant $b_2$ in
\cite[Table~4]{MRT}, whence by \cite[Prop.~5.3]{MRT} the module $W$ is
$Q$-linear small, unless we are in one of the exceptions listed in
\cite[Rem.~5.4]{MRT}. The only groups on that list relevant here are
$\Spin_9(3),\Spin_{11}(3)$, which were excluded. Then
Proposition~\ref{prop:BnSmall} applies to show that $\vhi$ occurs in the
$\ell$-modular reduction of a constituent $\chi$ of the Harish-Chandra
induction of some character $\psi$ of $L$ as in Table~\ref{tab:allBn}.
In particular $\chi$ is either unipotent, or in $\cE(G,s)$ or $\cE(G,t)$.
\par
If $\chi$ is unipotent, then by \cite[Cor.~6.5]{DM19} we obtain that $\vhi$
is in fact a constituent of one of the complex characters in
Table~\ref{tab:allBn}. Now assume that $\chi\in\cE(G,t)$. By the main result of
\cite{BDR17}, the union of $\ell$-blocks in $\cE_\ell(G,t)$ is Morita equivalent
to the union of unipotent $\ell$-blocks in $\cE_\ell(C,1)$, where $C$ is dual
to $C_{G^*}(t)\cong \Sp_{2n-2}(q)(q-\eps1)$. By \cite[Thm.~2.1]{GMST} the
smallest non-trivial degree of any unipotent $\ell$-modular Brauer character
of $C$ is at least $c:=\frac{1}{2}(q^{n-1}-1)(q^{n-1}-q)/(q+1)$ (observe that
the Weil modules are not unipotent as $\ell\ne2$). Hence any
$\vhi\ne(\rho_t^\eps)^\circ$ in $\cE_\ell(G,t)$ has degree at least
$$|G^*:C_{G^*}(s)|_{q'}\cdot c
  =\frac{1}{2}\,\frac{(q^{2n}-1)(q^{n-1}-1)(q^{n-1}-q)}{(q+1)^2}$$
which is larger than our bound.
\par
Finally, assume that $\chi\in\cE(G,s)$. The Harish-Chandra induction of
$\rho_{s,1}$ and $\rho_{s,q}$ from $L$ to $G$ only contains the characters
denoted $\rho_{s,1}$, $\rho_{s,q}$, $1\sqt\rho_i$ and
$\St\sqt\rho_i$, for $i=2,3,4$, from Table~\ref{tab:E(G,s)}. Since we
assume that the $\ell$-modular decomposition matrix of $G$ is uni-triangular,
lower bounds for the degrees of the corresponding Brauer characters can be
derived from Lemma~\ref{lem:rhosBn}. These show that $\vhi$ must be equal to
one of $\rho_{s,1}^\circ$, $\rho_{s,q}^\circ$.
\end{proof}

\begin{rem}   \label{rem:triangular}
The proof shows that in fact it suffices to assume that the $\ell$-modular
decomposition matrix of $G$ has a uni-triangular submatrix for the rows
corresponding to the constituents of the Harish-Chandra induction from $L$
to $G$ of the complex characters listed in Table~\ref{tab:allBn}. By
\cite[Thm.~6.3]{DM19} under mild assumptions on $\ell$ this is known for the
unipotent characters; in those cases we only need to assume it for the
characters in $\cE(G,s)$ listed in Table~\ref{tab:E(G,s)}.
\end{rem}

Let $d_\ell(q)$ denote the order of $q$ modulo~$\ell$. We then obtain Theorem~\ref{thm:main odd} in the following form:
 
\begin{thm}   \label{thm:BnSmall}
 Let $G=\Spin_{2n+1}(q)$ with $q$ odd and $n\ge4$, and $\ell\ge5$ a prime not
 dividing~$q$ such that $d_\ell(q)$ is either odd, or $d_\ell(q)>n/2$. Let
 $\vhi$ be an $\ell$-modular irreducible Brauer character of $G$ of degree less
 than $\frac{1}{2}(q^{4n-8}-q^{2n})$. Then $\vhi(1)$ is one of
 $$\begin{aligned}
  1,\quad & \frac{q^{2n}-1}{q^2-1},\quad \hlf q\frac{(q^n-1)(q^{n-1}-1)}{q+1},
  \quad \hlf q\frac{(q^n+1)(q^{n-1}+1)}{q+1},\\ 
  & \hlf q\frac{(q^n+1)(q^{n-1}-1)}{q-1}-\kappa_{\ell,q^n-1},\quad
   \hlf q\frac{(q^n-1)(q^{n-1}+1)}{q-1}-\kappa_{\ell,q^n+1},\\ 
  & q\frac{q^{2n}-1}{q^2-1},\quad 
   \frac{q^{2n}-1}{q\pm1}.
 \end{aligned}$$
\end{thm}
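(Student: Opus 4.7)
The proof proceeds by applying Proposition~\ref{prop:Bn} and then identifying, for each ordinary character $\chi$ listed in Table~\ref{tab:allBn}, the irreducible constituents of the $\ell$-modular reduction $\chi^0$.

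To invoke Proposition~\ref{prop:Bn} the first task is to verify its hypotheses. The new threshold $\frac12(q^{4n-8}-q^{2n})$ is below $q^{4n-8}-q^{2n}$. Since $n\ge4$ gives $n/2\ge2$, the assumption that $d_\ell(q)$ is odd or strictly larger than $n/2$ rules out $d_\ell(q)=2$, so $\ell\nmid q+1$; combined with $\ell\nmid q$ this yields $\ell\nmid q(q+1)$. Uni-triangularity of the pertinent part of the $\ell$-modular decomposition matrix holds in the unipotent blocks by \cite[Thm.~6.3]{DM19} (for which the assumption $\ell\ge5$ is enough), and in the blocks meeting $\cE(G,s)$ by Lemma~\ref{lem:rhosBn}; the conclusion for the rows corresponding to the constituents of the Harish-Chandra induced characters from $L$ then follows via Remark~\ref{rem:triangular}. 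The two pairs $(n,q)\in\{(4,3),(5,3)\}$ explicitly excluded from Proposition~\ref{prop:Bn} are handled separately using the character table data and decomposition matrices from \cite{Lue}.

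Granted Proposition~\ref{prop:Bn}, any $\vhi$ with $\vhi(1)<\frac12(q^{4n-8}-q^{2n})$ is a constituent of $\chi^0$ for some $\chi$ in Table~\ref{tab:allBn}. The non-unipotent characters are disposed of at once. Both $\rho_{s,1}^0$ and $\rho_{s,q}^0$ are irreducible by Lemma~\ref{lem:rhosBn}, yielding the degrees $(q^{2n}-1)/(q^2-1)$ and $q(q^{2n}-1)/(q^2-1)$; the Brauer character $(\rho_t^-)^0$ is irreducible by Lemma~\ref{lem:rhotBn} (using $\ell\nmid q+1$), giving $(q^{2n}-1)/(q+1)$; and $(\rho_t^+)^0$ is either irreducible, giving $(q^{2n}-1)/(q-1)$, or, in the exceptional cases $o(t)\in\{\ell^f,2\ell^f\}$ of Lemma~\ref{lem:rhotBn}, has constituents already accounted for by the reductions of $\{\rho_1,\rho_2,1_G\}$ or $\{\rho_{s,1},\rho_{s,q}\}$.

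The heart of the argument, and the principal obstacle, is the analysis of the four defect-$1$ unipotent characters $\rho_1,\rho_2,\rho_3,\rho_4$. Writing $e=d_\ell(q)$, the hypothesis that $e$ is odd or exceeds $n/2$ is precisely what is needed so that the $e$-Harish-Chandra series containing each $\rho_i$ is sharply constrained: by the Cabanes--Enguehard classification of unipotent $\ell$-blocks, no unipotent character of Table~\ref{tab:unipB} smaller than $\rho_i$ shares an $\ell$-block with it, with the sole possible exception of $1_G$. Combining this with the uni-triangular lower-left corner of the decomposition matrix and an explicit computation with the Deligne--Lusztig virtual characters, one concludes that $\rho_1^0$ and $\rho_2^0$ are irreducible, whereas $\rho_3^0$ equals $1_G^0+\tilde\rho_3^0$ with a single new irreducible constituent exactly when $\ell\mid q^n-1$ and is irreducible otherwise; the analogous statement for $\rho_4$ involves $\ell\mid q^n+1$. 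This produces the $\kappa_{\ell,q^n\mp1}$ corrections. Finally, the degree estimate from \cite[Thm.~2.1]{GMST} already used in the proof of Proposition~\ref{prop:Bn} rules out any contribution from a non-unipotent Lusztig series within the allowed degree range, completing the list of possible degrees stated in the theorem.
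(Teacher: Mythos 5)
Your proposal has the right overall skeleton---reduce to Proposition~\ref{prop:Bn}, then pin down the modular reductions of the characters in Table~\ref{tab:allBn}---but the step where you verify the uni-triangularity hypothesis of Proposition~\ref{prop:Bn} is circular, and this is exactly the part the hypothesis on $d_\ell(q)$ is there for. You cite Lemma~\ref{lem:rhosBn} as \emph{establishing} uni-triangularity of the decomposition matrix on $\cE_\ell(G,s)$, but that lemma explicitly \emph{assumes} it (``Assume that the $\ell$-modular decomposition matrix of $\cE_\ell(G,s)$ is unitriangular''). Remark~\ref{rem:triangular} likewise only reduces the hypothesis to a smaller submatrix; it does not supply the uni-triangularity. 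So as written nothing in your argument actually proves the key hypothesis, and the proposal does not close.

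The paper's proof supplies this missing input in a way your sketch never touches: it splits the hypothesis $d_\ell(q)$ odd or $d_\ell(q)>n/2$ into two cases. In the cyclic-defect range, uni-triangularity is automatic. In the linear-prime case ($d_\ell(q)$ odd) it invokes Gruber--Hiss~\cite[Thm.~8.2(c)]{GH97}, but this theorem applies to classical groups with \emph{connected centre}, so the paper first passes to a regular embedding $G\hookrightarrow\tG$, runs the argument of Proposition~\ref{prop:Bn} for $\tG$, and then descends to $G$ using that $|\tG/C_{\tG}(G)|=2$, so each irreducible Brauer character of $\tG$ has at most two constituents upon restriction to $G$. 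You omit the regular embedding and the descent entirely, and you assign the $d_\ell(q)$ hypothesis a different (and unsubstantiated) role, namely constraining the $e$-Harish-Chandra series containing $\rho_1,\dots,\rho_4$ so that the $\kappa$-corrections can be read off. In the paper, those corrections instead come from the partial unipotent decomposition matrices cited inside Proposition~\ref{prop:Bn} (via~\cite{DM19}), once uni-triangularity is secured; no separate Cabanes--Enguehard block-theoretic argument is made. Finally, the excluded cases $(n,q)\in\{(4,3),(5,3)\}$ are not settled in the paper by ``decomposition matrices from \cite{Lue}'' (that site records ordinary degrees, not decomposition numbers) but by observing cyclic Sylow $\ell$-subgroups for $\ell>5$ and ruling out $\ell=5$.
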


\begin{proof}
We claim that the assumption of Proposition~\ref{prop:Bn} is satisfied in our
situation. First, it is well-known that decomposition matrices for blocks with
cyclic defect groups are uni-triangular, so we are done in that case. Now let
$G\hookrightarrow\tG$ be a regular embedding, that is, $\tG$ is a group coming
from an algebraic group with connected centre and with the same derived
subgroup as $G$. By the result of Gruber--Hiss \cite[Thm.~8.2(c)]{GH97} the
decomposition matrix of any classical group with connected centre is
uni-triangular whenever $\ell>2$ is a linear prime. (Recall that a prime $\ell$
is \emph{linear} for $\tG$ if the order $d_\ell(q)$
of $q$ modulo~$\ell$ is odd.) Then, the proof of Proposition~\ref{prop:Bn}
shows that all $\ell$-modular Brauer characters of $\tG$ of degree less
than $q^{4n-8}-q^{2n}$ are as claimed. Now $|\tG/C_{\tG}(G)|=2$, so any
irreducible (Brauer) character of $\tG$ restricted to $G$ has at most two
irreducible constituents. Thus our claim holds for $G$ as well.
\par
It remains to discuss the groups $\Spin_9(3)$ and $\Spin_{11}(3)$ excluded in
the statement of Proposition~\ref{prop:Bn}. Their Sylow $\ell$-subgroups are
cyclic for all primes $\ell>5$, and then all small-dimensional Brauer
characters can easily be determined from the known ordinary character
degrees. For the prime $\ell=5$ we have $d_\ell(q)=d_5(3)=4$, so it is excluded
in our conclusion.
\end{proof}

\begin{rem}
For even $q$ we have $\Spin_{2n+1}(q)\cong\Sp_{2n}(q)$, and for these groups
it was shown by Guralnick--Tiep \cite[Thm.~1.1]{GT04} that the conclusion of
Theorem~\ref{thm:BnSmall} continues to hold for $n\ge5$, while for $n=4$
the lower bound has to be replaced by $q^2(q^4-1)(q^3-1)$ when $q>2$ and by
$203$ when $q=2$.
\end{rem}

\subsection{The even-dimensional spin groups}

\begin{thm}   \label{thm:DnSmall}
 Let $q$ be odd. Let either $G=\Spin_{2n}^+(q)$ with $n\ge5$, and $\ell\ge5$ a
 prime not dividing $q(q+1)$, or let $G=\Spin_{2n}^-(q)$ with $n\ge6$, and
 $\ell\ge5$ is a prime not dividing $q$. Then any
 $\ell$-modular irreducible Brauer character $\vhi$ of $G$ of degree less than
 $q^{4n-10}-q^{n+4}$ is a constituent of the $\ell$-modular reduction of one
 of the complex characters listed in Table~\ref{tab:allDn}.
\end{thm}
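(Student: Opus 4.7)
The plan is to mimic closely the argument for Theorem~\ref{thm:BnSmall} in the odd-dimensional case, replacing each ingredient by its even-dimensional analogue. First, the bound $q^{4n-10}-q^{n+4}$ lies below the threshold $b_2$ of \cite[Table~4]{MRT} for the groups in question, so by \cite[Prop.~5.3]{MRT} any $kG$-module $W$ affording such a Brauer character $\vhi$ is $Q$-linear small (once the small-rank sporadic exceptions listed in \cite[Rem.~5.4]{MRT} have been handled separately, along with any $(n,q)$ excluded in the small cases of Theorems~\ref{thm:lowDn}, \ref{thm:low2Dn}). Proposition~\ref{prop:DnSmall} then forces $\vhi$ to appear as an $\ell$-modular composition factor of the Harish-Chandra induction $R_L^G(\psi)$ for some $\psi$ from Table~\ref{tab:allDn}. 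Consequently, $\vhi$ is a constituent of the $\ell$-modular reduction of some $\chi\in\Irr(G)$ lying in a constituent of $R_L^G(\psi)$, and the task is to show that $\chi$ is itself forced to lie in Table~\ref{tab:allDn}.

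Next I would split according to the Lusztig series of $\chi$, exactly as in the proof of Proposition~\ref{prop:Bn}. If $\chi$ is unipotent, then \cite[Cor.~6.5]{DM19} (whose hypotheses on $\ell$ are met by our assumption $\ell\ge5$) already tells us that the $\ell$-modular constituents of such small unipotent characters are again constituents of the small unipotent characters in Table~\ref{tab:allDn}. If $\chi=\rho_t^\pm$ lies in the Lusztig series of a semisimple element $t$ with connected centraliser of type $\PCSO_{2n-2}^\pm(q)$, then the Morita equivalence of Bonnaf\'e--Dat--Rouquier \cite{BDR17} identifies $\cE_\ell(G,t)$ with the union of unipotent $\ell$-blocks in a smaller classical group $C$ dual to $C_{G^*}(t)$; combined with the lower bound \cite[Thm.~2.1]{GMST} on the smallest non-trivial unipotent Brauer degree of $C$, any $\vhi\ne(\rho_t^\pm)^\circ$ in $\cE_\ell(G,t)$ has degree of size at least $|G^*:C_{G^*}(t)|_{q'}$ times a polynomial of degree $2n-4$ in $q$, safely exceeding our bound. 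Here the condition $\ell\nmid(q+1)$ in the plus-type case is precisely what is needed for the relevant $\ell'$-part of~$t$ governing the $\rho_t^-$ family to have the same centraliser as $t$, so that Morita equivalence applies and, via the analogue of Lemma~\ref{lem:rhotBn}, $(\rho_t^\pm)^\circ$ is irreducible (or its constituents are already among those listed).

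The remaining and most delicate case is $\chi\in\{\rho_{s,a}^\pm,\rho_{s,b}^\pm\}$, lying in the Lusztig series of an isolated involution~$s$ whose centraliser is of type $\PCO_{2n-2}^\pm(q)$. Here, because the centraliser is disconnected, one cannot quite invoke the Bonnaf\'e--Dat--Rouquier theorem as a black box; rather, as in Lemma~\ref{lem:rhosBn}, one establishes an analogue based on a basic-set result of Geck and on a uni-triangular submatrix of the decomposition matrix spanned by the rows of $\cE(G,s)$, obtaining bounds on the entries by Harish-Chandra induction of projective characters from an end-node Levi subgroup and induction on $n$. Together with the uni-triangularity from Gruber--Hiss \cite[Thm.~8.2(c)]{GH97} (applied after a regular embedding $G\hookrightarrow\tG$ to reduce to a group with connected centre, and then transferred back to $G$ using $|\tG/C_{\tG}(G)|\le 4$), this rules out the appearance of any $\vhi$ with degree below our bound other than the $(\rho_{s,a/b}^\pm)^\circ$ themselves.

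The step I expect to be the main obstacle is the isolated-series case: producing the even-dimensional counterpart of Lemma~\ref{lem:rhosBn} requires a careful bookkeeping of which small unipotent characters of the Levi factor Harish-Chandra induce into $\cE(G,s)$, and the disconnected centraliser forces one to keep track of the two characters $\rho_{s,a}^\pm,\rho_{s,b}^\pm$ separately. A secondary nuisance is the handling of the low-rank groups $(n\le5$ or small $q)$ and the $(n,q)=(4,2)$-type sporadic exception of Theorem~\ref{thm:lowDn}, which I would dispose of by direct computation from the decomposition data available on \cite{Lue} and in \texttt{Chevie}.
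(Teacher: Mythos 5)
Your plan matches the paper's structure at the top level (invoke the MRT bound to conclude $Q$-linear smallness, feed this into Proposition~\ref{prop:DnSmall}, then split on the Lusztig series), but diverges in the two non-unipotent cases in ways that would not yield the theorem as stated.

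For $\chi=\rho_t^\pm$ with connected centraliser of type $\PCSO_{2n-2}^\pm(q)$, you cite \cite[Thm.~2.1]{GMST} for the lower bound on unipotent Brauer degrees of the group $C$ dual to $C_{G^*}(t)$. That theorem concerns symplectic and unitary groups, but $C$ is of orthogonal type here, so it does not apply. The paper instead uses \cite[Prop.~5.7]{DM19}, which gives the needed bound for the orthogonal Levi.

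The more serious issue is the case $\chi\in\{\rho_{s,a}^\pm,\rho_{s,b}^\pm\}$ with disconnected centraliser. You argue that Bonnaf\'e--Dat--Rouquier cannot be used as a black box and propose instead to build an even-dimensional analogue of Lemma~\ref{lem:rhosBn} and to pull uni-triangularity from Gruber--Hiss \cite[Thm.~8.2(c)]{GH97} via a regular embedding. But Gruber--Hiss requires $\ell$ to be a linear prime (i.e.\ $d_\ell(q)$ odd), and Theorem~\ref{thm:DnSmall} does not impose this: the stated hypotheses (in the $+$ case, $\ell\ge5$ with $\ell\nmid q(q+1)$; in the $-$ case, only $\ell\ge5$ with $\ell\nmid q$) allow plenty of unitary primes. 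So your route proves a strictly weaker statement. The paper avoids this entirely: it invokes \cite{BDR17} even for the isolated involution $s$ with disconnected centraliser, obtaining a Morita equivalence between $\cE_\ell(G,s)$ and the unipotent blocks of the disconnected group $\CO_{2n-2}^\eps(q)$ (connected component of index $2$), and then applies \cite[Prop.~5.7]{DM19} once more. No uni-triangularity hypothesis and no regular embedding are needed, which is exactly why Theorem~\ref{thm:DnSmall} carries no $d_\ell(q)$ restriction while the odd-dimensional Theorem~\ref{thm:BnSmall} does. Finally, a small nit: in the unipotent case the relevant reference for the $D_n$ setting is \cite[Cor.~5.8]{DM19}, not Cor.~6.5 (which the paper used for $B_n$).
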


\begin{proof}
By comparing we see that $\vhi(1)$ is smaller than the constant $b_2$ in
\cite[Table~4]{MRT}, whence by \cite[Prop.~5.3]{MRT} the module $W$ is
$Q$-linear small, unless we are in one of the exceptions listed in
\cite[Rem.~5.4]{MRT}. Then Proposition~\ref{prop:DnSmall} applies to show that
$\vhi$ is a constituent of the $\ell$-modular reduction of the Harish-Chandra
induction of some character $\psi$ of $L$ as in Table~\ref{tab:allDn}.
But in fact, the only exception relevant here is $\Spin_{10}^+(3)$, and there
the smallest degree of a non-trivial character of
$L_\chi'=\Spin_6^+(3)=\SL_4(3)$ is~26 and $[L:L_\chi]=(3^4-1)(3^3+1)=2240$,
while the bound in the statement is $3^{20-10}-3^{5+4}=39366$, so the
conclusion holds here as well. 
\par
We consider the various possibilities. If $\psi$ is unipotent, so one of $1_L$,
$\rho_1$ or $\rho_2$, then its Harish-Chandra induction only contains
characters occurring in Table~6 or~7 of \cite{DM19}. Our claim in this case
follows from \cite[Cor.~5.8]{DM19}. \par
Next assume that $\psi$ is one of $\rho_t^-$ or $\rho_t^+$. Then its
Harish-Chandra induction lies in the Lusztig series $\cE(G,t)$. By the
main result of \cite{BDR17} the $\ell$-blocks in $\cE_\ell(G,t)$ are Morita
equivalent to the unipotent $\ell$-blocks of a group dual to
$C_{G^*}(t)\cong \CSO_{2n-2}^\eps(q)$. In particular, the decomposition
matrices are the same. For the latter we may apply \cite[Prop.~5.7]{DM19}
to see that all Brauer characters in that series apart from $\rho_t^\pm$ have
degree at least
$$\frac{(q^n-\eps1)(q^{n-1}-\eps1)}{(q+1)}
  \Big(q\frac{(q^{n-1}-\eps1)(q^{n-3}+\eps1)}{q^2-1}-1\Big)$$
which is larger than our bound. A similar argument applies to the constituents
of the Harish-Chandra induction of $\rho_{s,a}^\pm$ and $\rho_{s,b}^\pm$.
In this case, \cite{BDR17} yields a Morita equivalence between the blocks in
$\cE_\ell(G,s)$ and the unipotent blocks of the disconnected group
$\CO_{2n-2}^\eps(q)$, with connected component of index~2. Another application
of \cite[Prop.~5.7]{DM19} shows our assertion in this last case.
\end{proof}

In order to make the previous result more explicit, we determine the
$\ell$-modular reductions of some of the low-dimensional
$\Spin_{2n}^\pm(q)$-modules in Table~\ref{tab:allDn}. The first result extends
\cite[Thm.~5.5]{DM19}:

\begin{prop}   \label{prop:decmat2Dn}
 Let $G=\Spin_{2n}^-(q)$ with $q$ odd and $n\ge6$, and $\ell\ge5$ a prime
 dividing $q+1$. Then the first eight rows of the decomposition matrix of
 the unipotent $\ell$-blocks of $G$ are approximated from above by
 Table~\ref{tab:Spin-}, where $k:=n-6$.
\end{prop}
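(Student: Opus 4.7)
The plan is to follow the template of Lemma~\ref{lem:rhosBn} above and of \cite[Thm.~6.3]{DM19}. By Geck's theorem \cite[Thm.~14.4]{CE} the set $\cE(G,1)$ of unipotent characters forms an ordinary basic set for the union of unipotent $\ell$-blocks, so the submatrix of the decomposition matrix indexed by unipotent characters is well-defined and square. Under our hypotheses ($\ell\ge5$, $q$ odd), the Gelfand--Graev approach of \cite[Thm.~6.3]{DM19} provides lower unitriangularity of this submatrix with respect to an ordering refining the partial order by the $a$-function. Hence the diagonal entries in Table~\ref{tab:Spin-} are forced to be~$1$, and it remains to bound the off-diagonal entries from above.

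To produce these upper bounds I would proceed by induction on $n$. The base case $n=6$ (where $k=0$) can be treated by direct computation from \cite{Lue} and {\tt Chevie}, or extracted from the analogous computations carried out for $D_n$ in \cite[Thm.~6.3]{DM19}. For the inductive step let $P=QL$ be the end-node maximal parabolic of $G$ with $L'\cong \Spin_{2n-2}^-(q)$. Since Harish-Chandra induction sends projectives to projectives, applying $R_L^G$ to the first several PIMs of $L$ (known inductively to have unipotent parts bounded as in the analogue of Table~\ref{tab:Spin-} for $n-1$) yields projective characters of $G$. Decomposing these via the Howlett--Lehrer formula for Harish-Chandra induction of unipotent characters, expressed combinatorially as operations on symbols of defect $\equiv 2 \pmod 4$, gives a collection of explicit projective characters of $G$. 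Unitriangularity then converts the coefficients of these projectives into the desired upper bounds for each off-diagonal entry of Table~\ref{tab:Spin-}.

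The main obstacle will be the family of four characters of degree polynomial $4n-7$ (the symbols $\binom{1,2,n-1}{0}$, $\binom{0,2,n-1}{1}$, $\binom{0,1,n-1}{2}$, $\binom{0,1,2}{n-1}$ from Table~\ref{tab:unip2D}), which lie in the same Harish-Chandra series as several of the earlier unipotent characters. Inducing a PIM from $L'$ therefore produces a sum of several unipotent characters of $G$, and identifying which off-diagonal positions acquire a multiplicity that grows linearly with $n$ (these are the $k+1$ entries in Table~\ref{tab:Spin-}, exactly paralleling the $k+1$ entries of Table~\ref{tab:E(G,s)}) versus those which stabilise at a small constant requires careful bookkeeping of how symbols of rank $n-1$ are extended to symbols of rank $n$ as one climbs the tower $\Spin_{12}^-\subset\Spin_{14}^-\subset\cdots\subset\Spin_{2n}^-$. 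The parameter $k=n-6$ is exactly the number of such admissible shifts counted along this tower. This is the same type of analysis already performed in Lemma~\ref{lem:rhosBn} and in \cite[Thm.~6.3]{DM19}, so no new conceptual ingredient beyond adapting that bookkeeping to symbols of defect $\equiv 2\pmod 4$ should be required.
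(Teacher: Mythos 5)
Your overall strategy matches the paper's: establish unitriangularity, pin down the base case $n=6$ directly, then propagate upper bounds to larger $n$ by Harish-Chandra induction of projectives. But your write-up has some real gaps and inaccuracies.

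First, the crucial step you omit entirely is the argument that \emph{no other Harish-Chandra series can contribute} to the first eight rows. The paper closes the proof with an appeal to \cite[Prop.~5.3]{DM19}, which shows that only the principal series, the $A_1$-series and the ``$.2$''-series produce PIMs whose unipotent part contains a character of $a$-value at most~3. Without this, inducing eight projectives (one per column of Table~\ref{tab:Spin-}) gives you eight \emph{witnesses}, but it does not prove the first eight rows have no additional nonzero entries coming from some further Harish-Chandra series. This is the heart of the argument, and your draft simply doesn't address it.

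Second, your base case is too vague to be checkable. The paper specifies exactly how the eight $n=6$ projectives arise: the six principal series PIMs come from the decomposition matrix of the Hecke algebra $\cH(B_5;q^2;q)$, the $A_1$-series projective is Harish-Chandra induced from a PIM of a Levi of type $A_4$, and the ``$.2$''-series projective comes from a Levi of type $\tw2D_4\times A_1$. Saying ``direct computation from \cite{Lue} and {\tt Chevie}'' is plausible but glosses over precisely those choices, and it is those specific projectives whose inductions must be tracked. Also note the paper invokes \cite[Thm.~5.5]{DM19} (the type-$D$ analogue) as its template, not \cite[Thm.~6.3]{DM19} which handles type $B$; this is the reference your inductive step should be modelled on.

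Finally, a smaller but telling slip: the growing off-diagonal entries in Table~\ref{tab:Spin-} are $k$, $\binom{k}{2}$ and $\binom{k+1}{2}$ with $k=n-6$, i.e.\ they grow \emph{quadratically} in $n$ for the binomial entries, not linearly, so the parallel you draw with the ``$k+1$'' entries of Table~\ref{tab:E(G,s)} is not correct. This matters because the bookkeeping of iterated Harish-Chandra induction has to reproduce exactly those binomial coefficients, which is a different combinatorial shape than in the $\cE(G,s)$ case.
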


\begin{table}[ht]
{\small\[\begin{array}{l|c|ccc|c|cccc}
  \quad\rho& a_\rho\cr
\hline
       \binom{0, n}{-}& 0& 1\cr
     \binom{1, n-1}{-}& 1& k& 1\cr
   \binom{0, 1, n}{ 1}& 2& k& 1& 1\cr
\hline
     \binom{2, n-2}{-}& 2& \binom{k}{2}& k& .& 1\cr
\hline
 \binom{1, 2, n-1}{ 0}& 3& \binom{k+1}{2}& k& k& 1& 1\cr
 \binom{0, 2, n-1}{ 1}& 3& \binom{k}{2}& \binom{k+1}{2}& k& .& .& 1\cr
 \binom{0, 1, n-1}{ 2}& 3& \binom{k+1}{2}& k& k& 1& .& .& 1\cr
 \binom{0, 1, 2}{ n-1}& 3& .& .& 1& .& .& .& .& 1\cr
\hline
  & & ps& ps& ps& ps& A_1& ps& ps& .2\cr
\end{array}\]}
\caption{Approximate decomposition matrices for $\Spin_{2n}^-(q)$, $n\ge6$}
    \label {tab:Spin-}
\end{table}

\begin{proof}
This is proved along the very same lines as \cite[Thm.~5.5]{DM19}. We start
with the case $n=6$. Here, the six principal series PIMs are obtained from
the decomposition matrix of the Hecke algebra $\cH(B_5;q^2;q)$. The projective
character in the $A_1$-series comes by Harish-Chandra induction from a PIM of
a Levi subgroup of type $A_4$, while the projective character in the
``.2"-series is obtained
from a Levi subgroup of type $\tw2D_4\times A_1$. This shows the claim for
$n=6$ (with $k=0$). Then Harish-Chandra induction of these eight projective
characters yields projective characters of $G$ with the stated decompositions
for all $n\ge7$.
\par
No other Harish-Chandra series can contribute to characters of $a$-value at
most~3 by \cite[Prop.~5.3]{DM19}.
\end{proof}

\begin{rem}
For $n=5$ there is at least one unipotent PIM of $\tw2D_5(q)$ in the
Harish-Chandra series of type $A_1^2$ of $a$-value~2, and we do not see how to
rule out that there might be several of them.
\end{rem}

\begin{lem}   \label{lem:rhotDn}
 Let $G=\Spin_{2n}^\eps(q)$, $\eps\in\{\pm\}$ and $n\ge3$. Then
 $$(\rho_t^\eps)^0=\begin{cases}
       (\eps\rho_1+\rho_2+1_G)^0& \text{if }o(t)=\ell^f>1,\\
       (\rho_{s,a}^\eps+\rho_{s,b}^\eps)^0&
           \text{if }o(t)=2\ell^f>2,\,\ell\ne2,\end{cases}$$
 and $\rho_t^\eps$ remains irreducible modulo $\ell$ otherwise.
 Furthermore, $\rho_{s,a}^\pm,\rho_{s,b}^\pm$ remain irreducible modulo
 all primes $\ell\ne2$.
\end{lem}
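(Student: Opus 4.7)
The plan is to follow the strategy of the proof of Lemma~\ref{lem:rhotBn} step by step, adapted to the disconnected centraliser situation in type $D_n$. I would first recall from the proof of Theorems~\ref{thm:lowDn} and \ref{thm:low2Dn} that $\rho_t^\eps$ is the semisimple character attached to a semisimple element $t\in G^*$ of order dividing $q\mp1$ whose centraliser is of type $\PCSO_{2n-2}^\eps(q)$. The starting observation is the criterion of \cite[Prop.~1]{HM}: a semisimple character in $\cE(G,s)$ remains irreducible modulo~$\ell$ whenever the $\ell'$-part of the parameter has the same centraliser as the parameter itself. For our parameters $t$, this fails precisely when the $\ell'$-part of $t$ is either trivial or of order~$2$; in every other case $\rho_t^\eps$ thus remains irreducible modulo~$\ell$, which already accounts for the ``otherwise'' clause in the statement.

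Next, for the two exceptional cases I would use Brou\'e--Michel \cite[Thm.~9.12]{CE} to locate the block containing $\rho_t^\eps$. When $o(t)=\ell^f>1$ the $\ell'$-part of $t$ is trivial, so $\rho_t^\eps$ lies in a unipotent $\ell$-block; when $o(t)=2\ell^f>2$ with $\ell\ne2$ the $\ell'$-part of $t$ is an involution of the same geometric type as $s$, hence $G^*$-conjugate to $s$, placing $\rho_t^\eps$ in the union of blocks $\cE_\ell(G,s)$ containing $\rho_{s,a}^\eps$ and $\rho_{s,b}^\eps$. In each case the conclusion would be obtained by writing the semisimple character $\rho_t^\eps$ as an explicit alternating sum of Deligne--Lusztig characters $R_{T}^G(\theta)$, reducing modulo~$\ell$, and identifying the result with the sum indicated in the statement, using that the Deligne--Lusztig characters are stable under replacing $t$ by its $\ell'$-part.

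For the final assertion, I would apply \cite[Prop.~1]{HM} once more: since $s$ is an involution, its $\ell'$-part coincides with $s$ for every odd prime $\ell$, and so the semisimple characters $\rho_{s,a}^\pm$, $\rho_{s,b}^\pm$ remain irreducible modulo any such $\ell$.

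The main obstacle is the bookkeeping in the Deligne--Lusztig step. Unlike the type $B_n$ situation handled in Lemma~\ref{lem:rhotBn}, here $C_{G^*}(s)$ is disconnected with connected component of index~$2$ in $\PCO_{2n-2}^\eps(q)$, which is precisely why two semisimple characters $\rho_{s,a}^\eps,\rho_{s,b}^\eps$ arise instead of a single $\rho_{s,1}$; one must check that Lusztig's formula for $\rho_t^\eps$ pairs these two characters correctly and produces the stated sign $\eps\rho_1$ (rather than $\rho_1$) in the unipotent case, by tracking the characters of the component group of $C_{G^*}(s)$ and the sign $\eps$ attached to the various maximal tori involved. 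Once that combinatorial check is performed, the rest is a direct transcription of the argument for $\Spin_{2n+1}(q)$.
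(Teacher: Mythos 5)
Your proposal is correct and follows the same route as the paper, whose own proof is very terse: it invokes \cite[Prop.~1]{HM} (as in Lemma~\ref{lem:rhosBn}) for the claim about $\rho_{s,a}^\pm,\rho_{s,b}^\pm$ and simply declares the argument for $\rho_t^\pm$ to be ``completely analogous'' to Lemma~\ref{lem:rhotBn}. Your step-by-step expansion of that analogy --- generic irreducibility via the \cite[Prop.~1]{HM} centraliser criterion, block location via Brou\'e--Michel in the two exceptional cases, and the explicit Deligne--Lusztig expansion of the semisimple character, with the final assertion again from \cite[Prop.~1]{HM} since $s$ is an involution --- matches the intended argument, and you correctly identify the disconnected centraliser bookkeeping as the only genuinely new technical point relative to the type $B_n$ case.
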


\begin{proof}
According to the description in the proof of Theorem~\ref{thm:lowDn}, the
characters $\rho_{s,a}^\pm$ and $\rho_{s,b}^\pm$ are semisimple
in Lusztig series indexed by elements of order $2$, so we may argue as in the
proof of Lemma~\ref{lem:rhosBn} using \cite[Prop.~1]{HM}.
The proof for $\rho_t^\pm$ is completely analogous to the one of
Lemma~\ref{lem:rhotBn}.
\end{proof}

\begin{cor}   \label{cor:main Dn}
 Keep the assumptions on $n,q$ and $\ell$ from Theorem~\ref{thm:DnSmall}.
 If $\vhi$ is an $\ell$-modular irreducible Brauer character of
 $\Spin_{2n}^\eps(q)$ of degree $\vhi(1)<q^{4n-10}-q^{n+4}$ then $\vhi(1)$ is
 one of
 $$\begin{aligned}
   1,\quad
   &q\frac{(q^n-\eps1)(q^{n-2}+\eps1)}{q^2-1}-\kappa_{\ell,q^{n-1}+\eps1},\ 
   &q^2\frac{(q^{2n-2}-\eps1)}{q^2-1}-\kappa_{\ell,q^n-\eps1},\\
   &\hlf\frac{(q^n-\eps1)(q^{n-1}\pm\eps1)}{q\mp 1},\ 
   &\frac{(q^n-\eps1)(q^{n-1}\pm\eps1)}{q\mp1}.
 \end{aligned}$$
\end{cor}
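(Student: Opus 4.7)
The plan is to combine Theorem~\ref{thm:DnSmall} with the decomposition-number information assembled in the preceding lemmas to read off the precise list of small-degree Brauer character degrees. By Theorem~\ref{thm:DnSmall}, any irreducible $\vhi$ with $\vhi(1)<q^{4n-10}-q^{n+4}$ is a constituent of $\chi^0$ for some complex $\chi$ in Table~\ref{tab:allDn}, so it suffices to enumerate the $\ell$-modular constituents of each such $\chi$.

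For the non-unipotent semisimple characters in the table, Lemma~\ref{lem:rhotDn} is directly applicable. The characters $\rho_{s,a}^\pm,\rho_{s,b}^\pm$ remain irreducible modulo $\ell$ (as $\ell\ne2$), contributing the two half-integer degrees $\hlf(q^n-\eps1)(q^{n-1}\pm\eps1)/(q\mp1)$. The characters $\rho_t^\pm$ either stay irreducible—contributing the two integer degrees $(q^n-\eps1)(q^{n-1}\pm\eps1)/(q\mp1)$—or decompose with all constituents already appearing in earlier rows of Table~\ref{tab:allDn}, hence producing nothing new.

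For the three unipotent characters $1_G,\rho_1,\rho_2$ we use the decomposition-matrix structure of the unipotent blocks. For plus-type we invoke \cite[Thm.~5.5]{DM19}; for minus-type with $\ell\mid q+1$ we use Proposition~\ref{prop:decmat2Dn}; and for minus-type with $\ell\nmid q+1$ the blocks containing $1_G,\rho_1,\rho_2$ are separated by $e$-core considerations so no nontrivial decompositions occur. In each case the first three rows of the decomposition matrix have the shape
$$\rho_1^0 \;=\; \vhi_{\rho_1}+c_1\cdot 1_G^0,\qquad
  \rho_2^0 \;=\; \vhi_{\rho_2}+c_2\cdot 1_G^0,$$
so the new Brauer character degrees are $\rho_i(1)-c_i$.

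The main obstacle will be to show $c_1,c_2\in\{0,1\}$ and to pin down the precise congruence conditions under which they equal $1$. The bounds $k=n-6$ appearing in Table~\ref{tab:Spin-} are too coarse, so the argument must refine them via the symbol combinatorics of the unipotent blocks: with $e=d_\ell(q)$, the characters $\rho_i$ and $1_G$ share a block only when their parametrising symbols have a common $e$-core, which translates exactly to $\ell\mid q^{n-1}+\eps1$ for $\rho_1$ and $\ell\mid q^n-\eps1$ for $\rho_2$; in those coincident-block cases the structure of the principal-series PIM (as computed via Harish-Chandra induction from the end-node Levi, as in Proposition~\ref{prop:decmat2Dn}) forces the relevant entry to be exactly $1$ rather than larger. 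Combined with the trivial case $c_i=0$ arising from block separation, this yields precisely the $\kappa_{\ell,q^{n-1}+\eps1}$ and $\kappa_{\ell,q^n-\eps1}$ corrections stated in the corollary.
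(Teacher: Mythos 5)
Your overall strategy matches the paper's: reduce via Theorem~\ref{thm:DnSmall} to enumerating $\ell$-modular constituents of the complex characters in Table~\ref{tab:allDn}, then handle $\rho_{s,a}^\pm,\rho_{s,b}^\pm,\rho_t^\pm$ with Lemma~\ref{lem:rhotDn}. That part of your argument agrees with the published proof.

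Where your proposal diverges — and where it has a genuine gap — is in the treatment of the three unipotent characters $1_G,\rho_1,\rho_2$. The paper's actual proof cites \cite[Prop.~5.7]{DM19}, which directly supplies the precise $\ell$-modular decomposition numbers of these unipotent characters (not merely upper bounds), and in particular gives that $\rho_i^0-(1_G)^0$ is irreducible exactly when the corresponding divisibility $\ell\mid q^{n-1}+\eps 1$ resp.\ $\ell\mid q^n-\eps 1$ holds. You instead cite \cite[Thm.~5.5]{DM19} and Proposition~\ref{prop:decmat2Dn}, which only give \emph{upper bounds} (the entries $k=n-6$ in Table~\ref{tab:Spin-}), and you explicitly acknowledge that "the bounds are too coarse." Your suggested repair — arguing by $e$-core block combinatorics that the relevant multiplicity is forced to be exactly $1$ when the blocks coincide — is a plausible heuristic but is not carried out: you give no argument that rules out the multiplicity exceeding $1$ in the coincident-block case, and the very Harish-Chandra-induced PIMs you invoke are the source of the coarse bound $k$ to begin with. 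As stated, the proposal correctly identifies the shape of the answer and the right congruence conditions, but it does not actually establish $c_1,c_2\in\{0,1\}$; the step that pins this down must come from the sharper decomposition-matrix result in \cite[Prop.~5.7]{DM19}, which your argument does not supply a substitute for.
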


\begin{proof}
This follows directly from Theorem~\ref{thm:DnSmall} with the partial
decomposition matrix for the unipotent characters in \cite[Prop.~5.7]{DM19}
and the statement of Lemma~\ref{lem:rhotDn}.
\end{proof}

This implies Theorem~\ref{thm:main even}.



\begin{thebibliography}{99}

\bibitem{BDR17}
{\sc C. Bonnaf\'e, J.-F. Dat, R. Rouquier}, Derived categories and
  Deligne--Lusztig varieties II. \emph{Ann. of Math. (2) \bf 185} (2017),
  609--670.

\bibitem{CE}
{\sc M. Cabanes, M. Enguehard}, \emph{Representation Theory of Finite
  Reductive Groups}. Cambridge University Press, Cambridge, 2004.

\bibitem{DM19}
{\sc O. Dudas, G. Malle}, Bounding Harish-Chandra series. \emph{Trans. Amer.
  Math. Soc. \bf371} (2019), 6511--6530.

\bibitem{Chv}
{\sc M.~Geck, G.~Hiss, F.~L\"ubeck, G.~Malle, G.~Pfeiffer}, {\sf CHEVIE} ---
  A system for computing and processing generic character tables for finite
  groups of Lie type, Weyl groups and Hecke algebras. \emph{Appl. Algebra Engrg.
  Comm. Comput. \bf7} (1996), 175--210.

\bibitem{GM19}
{\sc M.~Geck, G.~Malle}, \emph{The Character Theory of Finite Groups of Lie
  Type: A Guided Tour}. Cambridge University Press, Cambridge, to appear.

\bibitem{GH97}
{\sc J. Gruber, G. Hiss}, Decomposition numbers of finite classical groups
  for linear primes. \emph{J. Reine Angew. Math. \bf485} (1997), 55--91.

\bibitem{GMST}
{\sc R. M. Guralnick, K. Magaard, J. Saxl, P. H. Tiep}, Cross characteristic
  representations of symplectic and unitary groups. \emph{J. Algebra \bf257}
  (2002), 291--347.

\bibitem{GT04}
{\sc R. M. Guralnick, P. H. Tiep}, Cross characteristic representations of
  even characteristic symplectic groups. \emph{Trans. Amer. Math. Soc. \bf356}
  (2004), 4969--5023.

\bibitem{HM}
{\sc G. Hiss, G. Malle}, Low-dimensional representations of special unitary
  groups. \emph{J. Algebra \bf236} (2001), 745--767.

\bibitem{Lue}
{\sc F. L\"ubeck}, Character degrees and their multiplicities for some groups
  of Lie type of rank $< 9$. Available at
  http://www.math.rwth-aachen.de/\~{ }Frank.Luebeck/chev/DegMult/

\bibitem{MRT}
{\sc K. Magaard, G. R\"ohrle, D. M. Testerman}, On the irreducibility of
  symmetrizations of cross-characteristic representations of finite classical
  groups. \emph{J. Pure Appl. Algebra \bf217} (2013), 1427--1446.

\bibitem{Ng10}
{\sc H. N. Nguyen}, Low-dimensional complex characters of the symplectic and
  orthogonal groups. \emph{Comm. Algebra \bf38} (2010), 1157--1197.

\bibitem{TZ}
{\sc P. H. Tiep, A. Zalesskii}, Minimal characters of the finite classical
  groups. \emph{Comm. Algebra \bf24} (1996), 2093--2167.

\bibitem{Wh90}
{\sc D. White}, Decomposition numbers of $\Sp(4,q)$ for primes dividing
  $q\pm 1$. \emph{J. Algebra \bf132} (1990), 488--500.

\end{thebibliography}
\end{document}